\theoremstyle{plain}
\newtheorem{prop}{Proposition}[section]
\newtheorem{thm}[prop]{Theorem}
\newtheorem{cor}[prop]{Corollary}
\newtheorem{lem}[prop]{Lemma}
\theoremstyle{definition}
\newtheorem{exmp}[prop]{Example}
\newtheorem{defn}[prop]{Definition}
\newtheorem*{convention}{Convention}
\theoremstyle{remark}
\newtheorem{remark}[prop]{Remark}
\numberwithin{equation}{section}
\newcommand{\defin}{\textbf}
\newcommand{\ev}{\operatorname{ev}}
\newcommand{\Hom}{\operatorname{Hom}}
\newcommand{\Lie}{{\mathcal{L}}}
\newcommand{\loc}{\operatorname{loc}}
\newcommand{\muCZ}{\mu_{\operatorname{CZ}}}
\newcommand{\nice}{^{\operatorname{nice}}}
\newcommand{\std}{_{\operatorname{std}}}
\newcommand{\wind}{\operatorname{wind}}
\newcommand{\windpi}{\operatorname{wind}_\pi}
\newcommand{\U}{\operatorname{U}}
\newcommand{\selflinking}{\operatorname{sl}}
\newcommand{\CC}{{\mathbb C}}
\newcommand{\DD}{{\mathbb D}}
\newcommand{\NN}{{\mathbb N}}
\newcommand{\RR}{{\mathbb R}}
\newcommand{\ZZ}{{\mathbb Z}}
\newcommand{\dD}{{\mathcal D}}
\newcommand{\jJ}{{\mathcal J}}
\newcommand{\mM}{{\mathcal M}}
\newcommand{\uU}{{\mathcal U}}
\newcommand{\vV}{{\mathcal V}}
\newcommand{\1}{\mathds{1}}
\newcommand{\p}{\partial}
\renewcommand{\dbar}{\bar{\partial}}
\newcommand{\Cinftyloc}{\mc{C}^\infty_{\loc}}
\definecolor{blue}{rgb}{0,0,1}
\definecolor{red}{rgb}{1,0,0}
\definecolor{green}{rgb}{0,.7,0}
\definecolor{Chris}{rgb}{1,0,1}
\renewcommand{\a}{\alpha}
\renewcommand{\c}{\gamma}
\renewcommand{\d}{\delta}
\newcommand{\dtotal}{\delta_{\operatorname{total}}}
\newcommand{\dinfty}{\delta_{\i}}
\newcommand{\op}[1]{\operatorname{#1}}
\newcommand{\mc}{\mathcal}
\newcommand{\mb}{\mathbb}
\renewcommand{\i}{\infty}
\renewcommand{\.}{\cdot}
\renewcommand{\o}{\omega}
\newcommand{\ind}[1]{\operatorname{ind}(#1)}
\newcommand{\cz}[2]{\mu_{CZ}^{#2}(#1)}
\title[Unknotted orbits and nicely embedded curves]{Unknotted Reeb orbits
and nicely embedded holomorphic curves}
\author{Alexandru Cioba}
\address{Department of Mathematics \\ 
University College London \\
Gower Street \\
London WC1E 6BT \\ 
United Kingdom}
\email{a.cioba.12@ucl.ac.uk}
\author{Chris Wendl}
\address{Institut f\"ur Mathematik\\
Humboldt-Universit\"at zu Berlin\\
Unter den Linden~6\\
10099 Berlin\\
Germany}
\email{wendl@math.hu-berlin.de}
\subjclass[2010]{Primary 57R17; Secondary 32Q65, 53D35}
\begin{document}

\begin{abstract}
We exhibit a distinctly low-dimensional dynamical obstruction to the existence 
of Liouville cobordisms: for any contact $3$-manifold admitting an exact 
symplectic cobordism to the tight $3$-sphere, every nondegenerate contact
form admits an embedded Reeb orbit that is unknotted and has self-linking
number~$-1$.  The same is true moreover for any contact structure on a 
closed $3$-manifold that is reducible.  Our results generalize 
an earlier theorem of Hofer-Wysocki-Zehnder for the $3$-sphere,
but use somewhat newer techniques:
the main idea is to exploit the intersection theory of punctured holomorphic
curves in order to understand the compactification of the space of
so-called ``nicely embedded'' curves in symplectic cobordisms.  In the
process, we prove a local adjunction formula for holomorphic annuli
breaking along a Reeb orbit, which may be of independent interest.
\end{abstract}

\maketitle

\tableofcontents

\section{Introduction}
\label{sec:intro}

\subsection{Statement of the main results}
\label{sec:statement}

Contact structures arise in the context of Hamiltonian dynamics 
via the notion of \emph{convexity}: a convex hypersurface in a symplectic
manifold naturally inherits a contact structure, and the orbits of its
Reeb vector field then match the Hamiltonian orbits defined by any
Hamiltonian function that has the hypersurface as a regular level set.
In this paper, we consider
contact structures that are induced on convex and concave 
\emph{boundaries} of symplectic manifolds, i.e.~symplectic cobordisms.
Our main theorem relates the existence of exact symplectic cobordisms
between given contact manifolds
to a dynamical condition on their Reeb vector fields.  
In particular, we will restrict attention to dimension three
and discuss the existence of closed Reeb orbits $\gamma : S^1 \to M$
that are not only contractible but also \defin{unknotted}, meaning
$$
\gamma = f|_{\p\DD^2} \quad\text{ for some embedding 
$f : \DD^2 \hookrightarrow M$},
$$
where $\DD^2 \subset \CC$ denotes the closed unit disk.
All definitions relevant to the following statements may be found 
in~\S\ref{sec:definitions}, {but let us stress the following
convention from the start since it sometimes causes confusion:}

\begin{convention}
In this paper, the words ``symplectic cobordism \emph{from} $(M_1,\xi_1)$ 
\emph{to} $(M_2,\xi_2)$'' always mean that $(M_1,\xi_1)$ is the \emph{concave}
boundary and $(M_2,\xi_2)$ the \emph{convex} boundary of the cobordism
(cf.~\S\ref{sec:definitions}).  This usage is standard, and is natural
from the perspective of contact surgery, but a few other authors
(especially e.g.~in the literature on embedded contact homology)
sometimes interchange the order of ``convex'' and ``concave,'' which would make
our results false.
\end{convention}

\begin{thm}
\label{thm:main}
Assume $(M,\xi)$ is a closed contact $3$-manifold that admits a Liouville
cobordism to the standard contact $3$-sphere $(S^3,\xi\std)$.  
Then for every nondegenerate contact form
$\alpha$ on $(M,\xi)$, the Reeb vector field $R_\alpha$ admits a simple
closed orbit $\gamma$ whose image is the boundary of an embedded disk
$\dD \subset M$.  Moreover, the Conley-Zehnder index and
self-linking number of $\gamma$ with respect to~$\dD$ satisfy
$$
\muCZ(\gamma ; \dD) \in \{2,3\} \quad \text{ and } \quad
\selflinking(\gamma ; \dD) = -1.
$$
\end{thm}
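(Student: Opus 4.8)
The plan is to import the Hofer--Wysocki--Zehnder picture from the $(S^3,\xi\std)$-end of the cobordism and let the intersection theory of punctured holomorphic curves carry it down to $M$. Given a Liouville cobordism $W$ from $(M,\xi)$ to $(S^3,\xi\std)$, fix the prescribed nondegenerate contact form $\alpha$ on $M$ and some nondegenerate contact form $\alpha_+$ in $\xi\std$, arrange the convex collar of $W$ to be the corresponding piece of symplectization, and let $\widehat{W}$ be the completion obtained by attaching the cylindrical ends $(-\infty,0]\times M$ and $[0,\infty)\times S^3$. Choose an $\widehat{W}$-compatible almost complex structure $J$ that is cylindrical on the ends and generic in the interior. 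By the theorem of Hofer--Wysocki--Zehnder, $(S^3,\alpha_+)$ admits a disk-like global surface of section: there is a simple, unknotted, nondegenerate Reeb orbit $\gamma_0\subset S^3$ with $\selflinking(\gamma_0)=-1$ and $\muCZ(\gamma_0)\in\{2,3\}$ that bounds a two-parameter family of nicely embedded finite-energy $J$-holomorphic planes in the symplectization $\RR\times S^3$.

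Next I would introduce the relevant moduli space inside the cobordism. Let $\mM$ be the space of nicely embedded finite-energy $J$-holomorphic planes in $\widehat{W}$ with a single positive puncture asymptotic to $\gamma_0$. The first task is to show $\mM\neq\emptyset$; I would do this by grafting the Hofer--Wysocki--Zehnder planes onto $\widehat{W}$, either by a direct gluing argument along $\gamma_0$ or, equivalently, by a neck-stretching and compactness argument across the hypersurface $\{0\}\times S^3\subset\widehat{W}$. Automatic transversality in dimension four for nicely embedded curves then makes $\mM$ a smooth $2$-manifold, and vanishing of the Siefring self-intersection number of its members makes them pairwise disjoint, so that $\mM$ sweeps out an embedded, coorientable foliation of an open subset of $\widehat{W}$.

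The heart of the argument is the analysis of the SFT compactification $\overline{\mM}$. Since the leaves of $\mM$ foliate an open set but cannot foliate all of $\widehat{W}$ --- the negative cylindrical end $(-\infty,0]\times M$ obstructs this --- some sequence in $\mM$ must converge to a nontrivial holomorphic building. Siefring's intersection theory then does the bookkeeping: the intersection numbers $u_\infty^i\ast u_\infty^j$ between distinct components of the limiting building, and the adjunction-type double-point defects of the individual components $u_\infty^i$, are all non-negative and sum to $\lim_k\,(u_k\ast u_k)=0$, hence all vanish; this forces each $u_\infty^i$ to be embedded and nicely embedded, pairwise disjoint from the others, and to have matching asymptotic winding along each breaking orbit. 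The local adjunction formula for holomorphic annuli breaking along a Reeb orbit is exactly what is needed to control the double-point count --- and the Conley--Zehnder behaviour --- when two disk-like pieces come together along an orbit. A dimension and energy count then shows that the bottom-most level of the building lies in $\RR\times M$ and is a nicely embedded finite-energy plane $u_\infty$ asymptotic at its positive puncture to a \emph{simple} Reeb orbit $\gamma\subset M$. The curve $u_\infty$, together with its $\RR$-translates and the remainder of its two-parameter family, is the page of a disk-like global surface of section of $(M,\alpha)$: the projection of $u_\infty$ to $M$ is an embedded disk $\dD\subset M$ with $\p\dD=\gamma$, so $\gamma$ is unknotted; $\selflinking(\gamma;\dD)=-1$ follows from the writhe and adjunction estimates for an embedded finite-energy plane with connected asymptotic orbit; and $\muCZ(\gamma;\dD)\in\{2,3\}$ follows, exactly as in the $3$-sphere case, from the Fredholm index of $u_\infty$ together with the requirement that it be a nondegenerate leaf of a foliation by nicely embedded curves.

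I expect the main obstacle to be the compactness analysis of the third step: controlling $\overline{\mM}$ tightly enough to guarantee that the degeneration is genuinely this tame --- that no multiply covered or nodal components appear, that no ``connector'' piece in $\RR\times M$ slips in to destroy embeddedness, and that the descent really reaches $M$ through a single asymptotic orbit rather than a multiply-punctured limit. This is precisely where the new local adjunction formula for breaking annuli, used in tandem with Siefring's intersection theory, does the work; proving that formula --- together with the asymptotic analysis of holomorphic annuli near a Reeb orbit that underlies it --- is itself one of the principal technical components of the proof.
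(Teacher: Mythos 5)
Your overall strategy is recognizably the paper's (seed planes at the convex end, nicely embedded curves, Siefring intersection theory plus the local adjunction formula for the compactness analysis), but two steps as you state them would not go through. First, the positive-end setup: you take an arbitrary nondegenerate $\alpha_+$ on $S^3$ and invoke Hofer--Wysocki--Zehnder to get an unknotted orbit $\gamma_0$ bounding a two-parameter family of planes. For a general nondegenerate form, HWZ give existence of an unknotted orbit, not a classified family of planes asymptotic to it, and what the argument really needs is a \emph{uniqueness} statement at the positive end: every curve in $\RR\times S^3$ with a single positive puncture at the chosen orbit must be one of the seed planes or the trivial cylinder, so that any upper-level breaking of the family can only produce seed curves (otherwise the family can degenerate at $+\infty$ in uncontrolled ways and you cannot force it to descend into the negative end). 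The paper secures this by working with the standard Morse--Bott form $\alpha\std$ and $J\std$, writing the planes $u_w(z)=(z,w)$ explicitly and proving the uniqueness lemma (Lemma~\ref{lemma:uwUniqueness}); note also that no grafting or neck-stretching is needed to show $\mM\ne\emptyset$, since the seed curves already live in the cylindrical end $[0,\infty)\times S^3\subset\overline{W}$. Moreover, ``the negative end obstructs a global foliation, so something breaks downward'' is too soft: the paper forces descent by intersecting the evaluation map with a properly embedded path $\ell$ running from the positive to the negative end and analyzing the resulting $1$-dimensional constrained family.

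Second, and more seriously, your claim that ``a dimension and energy count shows the bottom-most plane is asymptotic to a \emph{simple} orbit'' is exactly the step that fails. The local adjunction formula (Corollary~\ref{cor:localAdjunction}) only yields the dichotomy: each breaking orbit is either simply covered or a \emph{doubly covered bad orbit}, and the second case cannot be excluded by index or energy considerations---the paper's Theorem~\ref{thm:compactness} explicitly allows it. The missing idea is the decoration/gluing argument: when the breaking orbit is a doubly covered bad orbit, the two choices of decoration give two inequivalent buildings, both glueable by Proposition~\ref{prop:gluing}, and the resulting glued families are disjoint (no two nicely embedded curves intersect) and together cover a full neighbourhood of the corresponding point of $\ell(\RR)$ under the evaluation map. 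Hence bad-orbit breakings are \emph{interior} points of the compactified constrained family; combined with openness of $\ev$ on the smooth part, the image of $\ev$ is all of $\ell(\RR)$, so one finds curves descending arbitrarily far into the negative end, and (assuming no unknotted orbit of index~$2$ exists, which is how the $\muCZ\in\{2,3\}$ dichotomy arises) the transversality of $\RR\times\{p_-\}$ forces the limit to be a Type~(II) building whose lower plane has index~$2$ and an odd-parity, hence simply covered, asymptotic orbit. Without this mechanism your argument can only produce an embedded plane whose asymptotic orbit might be a double cover, i.e.\ a disk that is multiply covered along its boundary, which does not prove the theorem.
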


A minor variation on the same techniques in the spirit of
\cite{Hofer:weinstein} will also imply the following:

\begin{thm}
\label{thm:nonprime}
Assume $(M,\xi)$ is a closed contact $3$-manifold and that either of the
following is true:
\begin{enumerate}
\item \label{item:nonprime}
$M$ is reducible, i.e.~it contains an embedded $2$-sphere that does
not bound an embedded ball;
\item \label{item:OT}
$(M,\xi)$ admits a Liouville cobordism to an overtwisted contact manifold.
\end{enumerate}
Then for every nondegenerate contact form
$\alpha$ on $(M,\xi)$, the Reeb vector field $R_\alpha$ admits a simple
closed orbit $\gamma$ whose image is the boundary of an embedded disk
$\dD \subset M$ such that
$$
\muCZ(\gamma ; \dD) = 2 \quad \text{ and } \quad
\selflinking(\gamma ; \dD) = -1.
$$
\end{thm}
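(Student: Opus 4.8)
The plan is to reduce both cases to the SFT-compactness machinery for "nicely embedded" holomorphic curves that underlies Theorem~\ref{thm:main}, by exhibiting in each case a Liouville (or at least exact symplectic) cobordism whose positive end is a contact manifold containing a \emph{holomorphic plane} — or more precisely, a finite-energy foliation near infinity by planes asymptotic to a simple, unknotted Reeb orbit of self-linking number~$-1$. Case~\eqref{item:OT} is essentially immediate: an overtwisted contact manifold $(M',\xi')$ admits a Bishop family of holomorphic disks filling an overtwisted disk, which by the standard bubbling-off analysis of Hofer yields a finite-energy plane asymptotic to an unknotted Reeb orbit with $\selflinking=-1$ and, in the nondegenerate generic situation, $\muCZ=2$ (this is exactly the setup of \cite{Hofer:weinstein}). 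Stacking the given Liouville cobordism $(M,\xi)\to(M',\xi')$ on top of a concave end and attaching the trivial symplectization below, we run the same degeneration-of-nicely-embedded-curves argument as for the $3$-sphere: the plane in the top level, pulled down through the cobordism, must limit onto a nicely embedded curve in the symplectization of $(M,\xi)$, whose unique positive puncture detects the desired orbit $\gamma\subset M$. The intersection-theoretic input (the local adjunction formula for breaking annuli advertised in the abstract) guarantees that embeddedness and the self-linking count $\selflinking(\gamma;\dD)=-1$, $\muCZ(\gamma;\dD)=2$ are preserved in the limit.

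For case~\eqref{item:nonprime} the point is to manufacture the needed exact cobordism from the topology. If $M$ is reducible, write $M = M_1 \# M_2$ nontrivially; equivalently $M$ contains a separating $2$-sphere $S$ that does not bound a ball. The standard trick (following Gromov, Eliashberg, and the connected-sum construction in contact geometry) is that one can attach a symplectic $1$-handle / perform a contact connected sum so as to realize $(M,\xi)$ as the concave boundary of an exact symplectic cobordism to $(M_1,\xi_1)\sqcup(M_2,\xi_2)$, or better, to find an exact cobordism to a manifold containing an overtwisted piece — but more efficiently, one exploits the sphere $S$ directly: after a small perturbation $S$ becomes convex in the symplectization $\RR\times M$, its dividing set is empty (a tight sphere has a unique tight germ), so $S$ bounds on one side a region that is symplectically a ball; capping produces a Liouville cobordism from $(M,\xi)$ to $(S^3,\xi\std)$ on that side, and we invoke Theorem~\ref{thm:main}. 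The subtlety is that $S$ need not bound a ball in $M$, only that its symplectic neighborhood together with the capping disk provides the cobordism; one must check this capping is \emph{exact} and not merely symplectic, which is where the convex-hypersurface normal form and the tightness of the standard sphere germ are used.

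The main obstacle, in both parts, is the same compactness-and-intersection-theory issue that governs Theorem~\ref{thm:main}: ensuring that the nicely embedded curve obtained in the SFT limit has \emph{exactly one} positive puncture, is somewhere injective, and does not develop extra negative punctures that would destroy the conclusion that the single positive orbit bounds an embedded disk with the stated indices. This is controlled by the energy/index bookkeeping and the local adjunction formula for annuli breaking along a Reeb orbit; the role of the hypotheses~\eqref{item:nonprime}–\eqref{item:OT} is precisely to force $\muCZ=2$ rather than merely $\muCZ\in\{2,3\}$, because the reducibility or overtwistedness provides a \emph{rigid} holomorphic plane (from a Bishop family or from the sphere $S$) sitting at the top of the cobordism, pinning down the index of the limiting orbit. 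I expect the writing of the cobordism construction in case~\eqref{item:nonprime} — in particular the claim that a non-ball-bounding sphere still yields an \emph{exact} (not just symplectic) cobordism to $(S^3,\xi\std)$ — to require the most care, whereas case~\eqref{item:OT} follows almost verbatim from the Bishop-family argument of \cite{Hofer:weinstein} combined with the nicely-embedded-curve degeneration already developed for Theorem~\ref{thm:main}.
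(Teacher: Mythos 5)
Your plan for case~\eqref{item:nonprime} contains the decisive gap. A closed convex sphere in a tight contact $3$-manifold never has empty dividing set: an empty dividing set would force $\langle e(\xi),[S]\rangle=\pm 2$, which is excluded in the tight case, so the dividing set is a single circle; and no ``capping inside the symplectization'' turns an essential sphere into a Liouville cobordism to $(S^3,\xi\std)$. What convex surface theory actually allows is a symplectic $3$-handle attachment along $S$, which produces a cobordism from $(M,\xi)$ to the \emph{summands} $(M_1,\xi_1)\sqcup(M_2,\xi_2)$, not to the tight sphere, and there is no reason these summands (e.g.\ universally tight lens spaces, which by the paper's own corollary admit no exact cobordism to $(S^3,\xi\std)$) should be exactly cobordant to $(S^3,\xi\std)$. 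Moreover, even if such a cobordism existed, invoking Theorem~\ref{thm:main} would only give $\muCZ(\gamma;\dD)\in\{2,3\}$, not the claimed $\muCZ=2$. The paper's actual argument never leaves $M$: it uses a topologically trivial Liouville cobordism from $(M,\alpha)$ to $(M,\alpha_+)$, where $\alpha_+$ is the Fish--Siefring contact form whose Reeb flow has a simple even orbit $\gamma_\infty$ on the belt sphere $S$ bounding two nicely embedded index-$1$ planes $u^\pm$ filling the hemispheres $S_\pm$; assuming no unknotted orbit with $\muCZ=2$, $\selflinking=-1$, all lower-level breakings occur along doubly covered bad orbits, these glue back in pairs of decorations, and the resulting compact $1$-manifold with boundary points $(u^+|\,\emptyset)$, $(u^-|\,\emptyset)$ yields a homotopy rel boundary from $S_+$ to $S_-$, contradicting $[S]\neq 0\in\pi_2(M)$.

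For case~\eqref{item:OT} your outline is closer to the paper (which uses Eliashberg's rigid plane $u^\infty$ from the vanishing-of-contact-homology picture, together with its \emph{uniqueness} among curves with positive end at $\gamma_\infty$, rather than rerunning the Bishop family), but two essential steps are missing. First, nothing in your sketch forces the family of planes to degenerate into the negative end; the paper argues by contradiction, and the heart of the matter is the possibility that every lower-level breaking occurs along a doubly covered bad orbit. Corollary~\ref{cor:localAdjunction} only restricts breaking orbits to be simple or doubly covered bad orbits---local adjunction does not by itself ``preserve the self-linking count in the limit''---and the bad-orbit case is excluded by a separate argument: such a building admits exactly two decorations, both glue via Proposition~\ref{prop:gluing} into $\mM\nice(J)$, so these buildings are interior points of the quotient $\widehat{\mM}\nice(J)$, which would then be a compact $1$-manifold with exactly one boundary point $(u^\infty|\,\emptyset)$, an impossibility. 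Without this, your limiting plane could be asymptotic to a doubly covered orbit whose projection is not an embedded disk. Second, the conclusion $\muCZ=2$ does not come from the Bishop family on the overtwisted end: it comes from the fact that the seed plane has index~$1$, so the only admissible building is Type~(I) of Theorem~\ref{thm:compactness} with an index-$1$ lower-level plane, and then Lemma~\ref{lemma:selfLinking} gives $\muCZ(\gamma;\dD)=2$ and $\selflinking(\gamma;\dD)=-1$.
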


Recall that an oriented $3$-manifold is reducible if and only if
it is either $S^1 \times S^2$ or $M_1 \# M_2$ for a pair of closed
oriented $3$-manifolds that are not spheres.  This condition is now 
known to be equivalent 
to the hypothesis $\pi_2(M) \ne 0$ used in \cite{Hofer:weinstein}:
in one direction this follows from the sphere theorem for $3$-manifolds,
and in the other, from \cite{Hatcher:3manifolds}*{Prop.~3.10} and the
Poincar\'e conjecture.  Note that both of the above theorems require 
nondegeneracy of the contact form $\alpha$, but it is possible for the sake of 
applications to weaken this condition; see Theorem~\ref{thm:technical} below.

\subsection{Context}
\label{sec:context}

The prototype for Theorems~\ref{thm:main} and~\ref{thm:nonprime} is a
20-year-old result of Hofer-Wysocki-Zehnder \cite{HWZ:unknotted},
which amounts to the case $(M,\xi) = (S^3,\xi\std)$ of
Theorem~\ref{thm:main}.  The result in \cite{HWZ:unknotted} was in some
sense far ahead of its time, as it required ideas from both the compactness
theory \cite{SFTcompactness} and the intersection theory 
\cite{Siefring:intersection} of punctured holomorphic curves, but it
appeared several years before either of those theories were developed in
earnest.  In the mean time the available techniques have improved, and our
proofs will make use of those improvements.

A weaker version of Theorem~\ref{thm:main} can be shown to hold in all 
dimensions, namely:

\begin{thm}
\label{thm:allDimensions}
If $(M,\xi)$ is a closed $(2n-1)$-dimensional contact manifold
admitting a Liouville cobordism to a standard contact sphere
$(S^{2n-1},\xi\std)$, then every contact form for $(M,\xi)$ admits a
contractible closed Reeb orbit.
\end{thm}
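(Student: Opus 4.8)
The plan is to adapt to the completed cobordism the ``filling by holomorphic hyperplanes'' mechanism behind the classical symplectic filling obstructions of Gromov, Eliashberg and McDuff, together with its relative refinements. First I would recast the hypothesis as a symplectic manifold $W$ with concave boundary $(M,\alpha)$ and convex boundary $(S^{2n-1},\alpha\std)$. Since $(S^{2n-1},\xi\std)$ is the convex boundary of a Darboux ball $B^{2n}\subset(\CC P^n,\omega_{\mathrm{FS}})$ --- the induced contact form being $\alpha\std$ after a conformal rescaling --- its complement $P:=\CC P^n\setminus\interior(B^{2n})$ is a compact symplectic cap with concave boundary $(S^{2n-1},\alpha\std)$. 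Gluing $P$ onto the convex end of $W$ and attaching a negative cylindrical end $(-\infty,0]\times M$ to the concave end produces a symplectic manifold $\widehat X$ with a single puncture-type end, modelled on the negative half-symplectization of $(M,\alpha)$, containing $P$ together with the hyperplane $\Lambda:=\CC P^{n-1}_\infty$; the point $q$ of $\CC P^n$ that was excised has, in $\widehat X$, been ``stretched out'' through $W$ and down the cylindrical end.

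Next I would fix an almost complex structure $J$ on $\widehat X$ that is cylindrical and admissible on the end, agrees with the Fubini--Study structure near $\Lambda$, and is generic and tame elsewhere, and consider the moduli space $\mM$ of $J$-holomorphic spheres in the homology class $[\ell]\in H_2(\widehat X)$ of a complex line. Every complex line of $\CC P^n$ disjoint from $B^{2n}$ already lies in $P\subset\widehat X$, so $\mM\neq\varnothing$; such curves are unobstructed and, as the direction of the line varies, locally foliate $\widehat X$ away from the excised region, and since the energy of a closed curve is a homological quantity every element of $\mM$ has the same finite energy $A=\int_{[\ell]}\omega_{\widehat X}$. The monotonicity lemma then confines every element of $\mM$ --- and every component of every \SFT\ limit of such curves --- to a fixed compact subset $K\subset\widehat X$: with total energy at most $A$, such a curve cannot penetrate the cylindrical end beyond a uniform depth.

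I would then run the usual degree count for the foliation by line-curves. Constraining the curves to pass through an auxiliary point $q_0\in\Lambda$, the moduli space $\mM_{q_0}$ carries a well-defined count --- a genus-zero relative Gromov--Witten-type invariant, equal to the classical number of lines of $\CC P^n$ through $q_0$ and a second point, namely $1$ --- so through every point of $\widehat X$ lying in the region swept out by the affine lines of $\CC P^n$ in the direction determined by $q_0$ there passes a curve of the compactified moduli space $\ol{\mM_{q_0}}$. That region contains the image of a neighborhood of the excised point $q$, hence points arbitrarily far down the cylindrical end. A curve through such a deep point cannot be smooth (by the confinement of the previous step), nor can it be a smooth curve with a sphere bubble (there are no nonconstant holomorphic spheres in the symplectization $\RR\times M$), nor can the degeneration be absorbed inside the exact piece $W$ or inside the cap $P$ (where $J$ is integrable and $[\ell]$ is indecomposable); it must therefore be a holomorphic building $\mbf u$, of arithmetic genus $0$, with a nonconstant level in $\RR\times M$.

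Finally I would harvest a contractible orbit from $\mbf u$. Its bottom level over $M$ is a nonconstant, genus-$0$ nodal curve all of whose punctures are positive; since a genus-zero building has tree-like combinatorial structure, no bottom component can carry two or more positive punctures without creating a loop, so some bottom component is a finite-energy plane $u_\infty:\CC\to\RR\times M$. Its single positive puncture is asymptotic to a closed orbit $\gamma$ of $R_\alpha$, which $u_\infty$ caps off by a disk; hence $\gamma$ is contractible in $\RR\times M\simeq M$. The soft ends of this argument --- the model $\widehat X$ of the first step and the extraction in the last --- are essentially formal; the hard part will be the degree/foliation count. In dimension four this is exactly the positivity-of-intersection mechanism underlying the classical filling obstructions and can be run ``by hand'', but for $2n>4$ one must set up the relevant genus-zero count in the non-compact, cylindrical-end manifold $\widehat X$ (equivalently, stretch the neck along a copy of $S^{2n-1}$ inside $W$ and appeal to \SFT\ compactness together with a relative Gromov--Witten invariant), establish transversality of $\mM$, and control the strata of its compactification near $\Lambda$. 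Notably, none of this requires the punctured intersection theory that drives Theorems~\ref{thm:main} and~\ref{thm:nonprime}; that is precisely why the present statement holds in every dimension while telling us nothing about unknottedness, self-linking, or the Conley--Zehnder index of~$\gamma$.
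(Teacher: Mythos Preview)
Your strategy---capping the convex end with $\CC P^n\setminus B^{2n}$ and running a degree argument with closed line-spheres---is a genuinely different route from the paper's.  The paper never caps: it works directly in the completed cobordism $\overline{W}$ with punctured $J$-holomorphic \emph{planes} asymptotic at the positive end to a single fixed simple orbit $\gamma_\infty$ on $(S^{2n-1},\alpha\std)$.  The explicit biholomorphism $\RR\times S^{2n-1}\cong\CC^n\setminus\{0\}$ gives a complete classification of such planes in the positive symplectization (Lemma~\ref{lemma:noUpperLevel}), so the only way a $1$-parameter family of planes constrained to a transverse path $\ell\subset\overline{W}$ can fail to be compact is by producing a building with a nontrivial lower level, and a plane in that level yields the contractible orbit.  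Your capped approach is closer to the Geiges--Zehmisch argument the paper cites and can in principle be made to work, but it trades the clean uniqueness lemma for the burden of justifying a Gromov--Witten-type count in a non-compact target.

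There is, however, a concrete error in your argument: the monotonicity claim is false.  On the negative cylindrical end the symplectic form is $d(e^r\alpha)$, so the compatible metric and the monotonicity constant degenerate as $r\to -\infty$; closed spheres of fixed $\omega$-area are \emph{not} confined to any compact subset of~$\widehat X$.  For a transparent counterexample take $W$ trivial, so that $\widehat X$ is $\CC P^n$ with a point stretched to a cylindrical end: the standard lines through $q_0$ passing at distance $d$ from the excised point are smooth closed spheres of area $A$ reaching depth $r\sim\log d\to -\infty$ as $d\to 0$.  Hence your inference ``a curve through such a deep point cannot be smooth'' does not follow.  The repair is to abandon confinement and argue by limits: produce smooth curves $u_k\in\mM_{q_0}$ with marked points $p_k$ satisfying $r(p_k)\to -\infty$ (this is where your degree argument, or an explicit deformation from the standard lines, is actually needed), observe that their Hofer energy is uniformly bounded (for closed curves it coincides with the $\omega$-area), and apply SFT compactness to the sequence.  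Since the marked points escape, the limiting building must have a nontrivial level in $\RR\times M$, and you then extract the capping plane exactly as you describe.
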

This result can largely be attributed to Hofer, as most of the ideas
needed for its proof are present in \cite{Hofer:weinstein}.
We will sketch a proof in \S\ref{sec:proof} which is similar in spirit to one
that has previously appeared in the work of Geiges and Zehmisch 
\cite{GeigesZehmisch:cobordisms}*{Corollary~3.3} (see also \cites{GeigesZehmisch:4ball,GeigesZehmisch:4ballErratum});
there is also an alternative proof via symplectic homology 
by Albers, Cieliebak and Oancea (see the appendix of~\cite{CieliebakOancea:EilenbergSteenrod}).
Analogous results that may be viewed as higher-dimensional versions of
Theorem~\ref{thm:nonprime} have appeared in 
\cites{AlbersHofer,NiederkruegerRechtman,GeigesZehmisch:connSum,GhigginiNiederkruegerWendl:subcritical}.
The conclusions of our main results however are stronger and uniquely
low dimensional: for instance in \S\ref{sec:application} below, we will see 
examples of contact $3$-manifolds
that always admit contractible but not necessarily unknotted Reeb orbits.
Theorem~\ref{thm:main} thus gives a new means of proving that these examples
cannot be exactly cobordant to the standard $3$-sphere.

We are aware of three general classes of contact $3$-manifolds that satisfy
the hypothesis of Theorem~\ref{thm:main}.  

\begin{exmp}
If $\xi$ is overtwisted, then a theorem of Etnyre and Honda
\cite{EtnyreHonda:cobordisms} provides Stein cobordisms from
$(M,\xi)$ to any other contact $3$-manifold, so in particular 
to~$(S^3,\xi\std)$.  Of course, in this case Theorem~\ref{thm:nonprime}
also applies and gives a slightly stronger result.
\end{exmp}

\begin{exmp}
Suppose $(M,\xi)$ is subcritically Stein fillable, or equivalently, 
that it can be obtained by performing contact connected sums on copies of
the tight $S^3$ and $S^1 \times S^2$.  In this case, $(M,\xi)$ is the convex
boundary of a Weinstein domain $W$ constructed by attaching $1$-handles to
a ball, and these $1$-handles can then be cancelled by attaching suitable
Weinstein $2$-handles.  This procedure embeds $W$ into the standard $4$-ball 
as a Weinstein subdomain and thus produces a Weinstein cobordism from
$(M,\xi)$ to~$(S^3,\xi\std)$.  Note that Theorem~\ref{thm:nonprime}
also applies in this case unless $M = S^3$.
\end{exmp}

The third class of examples was brought to our attention by Emmy Murphy.

\begin{exmp}
\label{ex:cap}
Suppose $L \subset [1,\infty) \times S^3$ is an \emph{exact Lagrangian cap} for 
some Legendrian knot $\Lambda$ in $(S^3,\xi\std)$, i.e.~$L$ is a compact 
Lagrangian submanifold properly embedded in the top half of the symplectization
$\RR \times S^3$, such that $\p L = \{1\} \times \Lambda$, $L$ is tangent
near its boundary to a globally defined 
Liouville vector field pointing transversely inward
at $\{1\} \times S^3$, and the restriction of the corresponding 
Liouville form to $L$ is
exact.  A result of Francesco Lin \cite{Lin:caps} guarantees
that such caps always exist after stabilizing $\Lambda$ sufficiently many
times.  Now suppose $\uU_L$ is an open neighbourhood of
$L$ in $[1,\infty) \times S^3$, where the latter is viewed as sitting on top
of the standard Weinstein filling $B^4$ of $(S^3,\xi\std)$.  This
neighbourhood can be choosen such that, after smoothing
corners, $B^4 \cup \overline{\uU}_L$ is a Weinstein filling of some contact 
$3$-manifold $(M,\xi)$, and $\left([1,T] \times S^3\right) \setminus \uU_L$
for suitable $T > 1$ defines a Liouville cobordism $W_+$ from $(M,\xi)$ to $(S^3,\xi\std)$,
see Figure~\ref{fig:cap}.  Using a Morse function on $L$ that has one
index~$2$ critical point and an inward gradient at~$\p L$, one can find
a Weinstein handle decomposition of $B^4 \cup \overline{\uU}_L$ having exactly one
$2$-handle (see Remark~\ref{remark:subcritical}), thus $B^4 \cup \overline{\uU}_L$ is 
not subcritical, and it follows from
the uniqueness of Stein fillings in the subcritical case
\cite{CieliebakEliashberg}*{Theorem~16.9(c)} that $(M,\xi)$ is not 
subcritically fillable.  For more details on this construction, 
see Appendix~\ref{app:cap}.

One can now use a well-known result of Eliashberg 
\cites{Eliashberg:diskFilling,CieliebakEliashberg} to extract from 
this example contact $3$-manifolds other than $(S^3,\xi\std)$ 
to which Theorem~\ref{thm:main} applies but
Theorem~\ref{thm:nonprime} does not.  Indeed, while 
$(M,\xi) = \p (B^4 \cup \overline{\uU}_L)$ could be reducible, 
it is Stein fillable and therefore tight, so Colin \cite{Colin:prime}
(see also \cite{Geiges:book}*{\S 4.12}) provides a prime decomposition 
$$
(M,\xi) = (M_1,\xi_1) \# \ldots \# (M_k,\xi_k),
$$
and Eliashberg's theorem implies that $B^4 \cup \overline{\uU}_L$ must be
Weinstein deformation equivalent to a domain obtained by attaching
Weinstein $1$-handles to Weinstein fillings of the summands.  But the summands
cannot all be $S^1 \times S^2$ since $(M,\xi)$ is not subcritically
fillable, so at least one of them is an irreducible tight contact $3$-manifold
admitting a Liouville cobordism to $(S^3,\xi\std)$.
\end{exmp}

\begin{figure}
\includegraphics{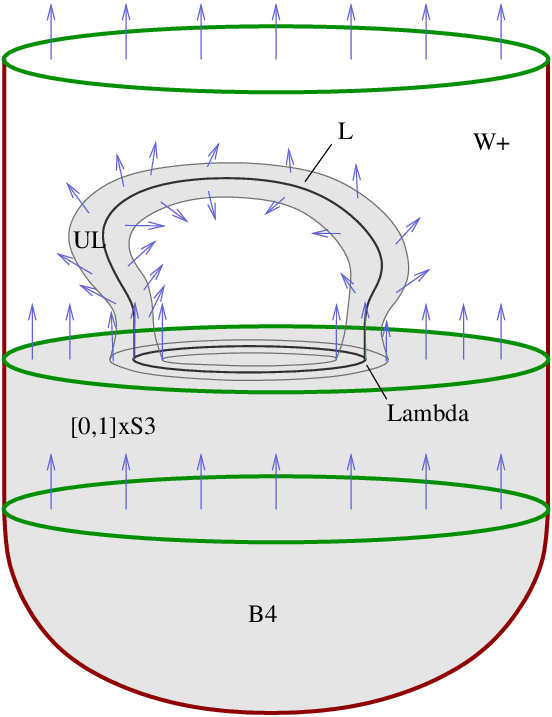}
\caption{\label{fig:cap} An exact Lagrangian cap for a Legendrian in 
$(S^3,\xi\std)$ produces a Liouville cobordism $W_+$ from $(M,\xi)$ to
$(S^3,\xi\std)$, where $(M,\xi) := \p(B^4 \cup \overline{\uU}_L)$ is not subcritically
fillable.}
\end{figure}

\begin{cor}
The contact $3$-manifolds $(M,\xi)$ described in Example~\ref{ex:cap}
and their prime summands all
admit unknotted Reeb orbits with Conley-Zehnder index $2$ or $3$ and
self-linking number $-1$ for every choice of nondegenerate contact form.
\end{cor}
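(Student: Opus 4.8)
The plan is to obtain both assertions as immediate consequences of Theorem~\ref{thm:main}, so the only real task is to produce, for each of the contact $3$-manifolds named in the corollary, a Liouville cobordism to $(S^3,\xi\std)$. For $(M,\xi)$ itself there is nothing to prove: Example~\ref{ex:cap} already constructs such a cobordism, namely $W_+ = ([1,T]\times S^3)\setminus\uU_L$. Theorem~\ref{thm:main} then furnishes, for every nondegenerate contact form on $(M,\xi)$, a simple closed Reeb orbit $\gamma$ bounding an embedded disk $\dD$ with $\muCZ(\gamma;\dD)\in\{2,3\}$ and $\selflinking(\gamma;\dD)=-1$.

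For the prime summands I would proceed as follows. As recalled in Example~\ref{ex:cap}, $(M,\xi)$ is Stein fillable and therefore tight, so Colin's prime decomposition theorem \cite{Colin:prime} gives $(M,\xi)=(M_1,\xi_1)\#\dots\#(M_k,\xi_k)$, and Eliashberg's theorem \cites{Eliashberg:diskFilling,CieliebakEliashberg} exhibits $B^4\cup\overline{\uU}_L$ as obtained by attaching Weinstein $1$-handles to Weinstein fillings $W_1,\dots,W_k$ of the summands; in particular every $(M_j,\xi_j)$ is Weinstein fillable. Fixing $i$ (and assuming $k\ge 2$, since $k=1$ is already covered by the previous paragraph), attach $k-1$ Weinstein $1$-handles to the disjoint union $\bigsqcup_{j=1}^k(M_j,\xi_j)$ to obtain a Weinstein cobordism whose concave boundary is that disjoint union and whose convex boundary is $(M,\xi)$, and then glue the filling $W_j$ onto each concave boundary component with $j\ne i$. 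The result is a Weinstein cobordism $\Sigma_i$ from $(M_i,\xi_i)$ to $(M,\xi)$.

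Concatenating $\Sigma_i$ with the Liouville cobordism $W_+$ of Example~\ref{ex:cap} produces a Liouville cobordism from $(M_i,\xi_i)$ to $(S^3,\xi\std)$, so Theorem~\ref{thm:main} applies verbatim to each $(M_i,\xi_i)$ and gives the claimed unknotted Reeb orbit with Conley--Zehnder index $2$ or $3$ and self-linking number $-1$. I do not anticipate a genuine obstacle here, since the whole argument reduces to invoking Theorem~\ref{thm:main} once the cobordisms have been assembled; the one point that warrants explicit care is checking that each of the operations used --- attaching a Weinstein $1$-handle, gluing a Liouville filling onto a concave boundary component, and concatenating two Liouville cobordisms --- again yields a bona fide Liouville cobordism, so that the hypothesis of Theorem~\ref{thm:main} is literally satisfied. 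This is standard (see \cite{CieliebakEliashberg}), but should be stated.
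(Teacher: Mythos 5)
Your proposal is correct and follows essentially the same route the paper intends: the corollary is stated without separate proof precisely because Example~\ref{ex:cap} already provides the Liouville cobordism $W_+$ from $(M,\xi)$ to $(S^3,\xi\std)$, and the Colin--Eliashberg decomposition there yields, for each summand $(M_i,\xi_i)$, a Weinstein cobordism to $(M,\xi)$ (the boundary-connect-sum domain with $W_i$ deleted, which is your $\Sigma_i$), so that stacking with $W_+$ puts every summand under the hypotheses of Theorem~\ref{thm:main}. Your explicit assembly via $1$-handles and capping off the other concave components, together with the standard gluing/concatenation facts from \cite{CieliebakEliashberg}, is exactly the argument the paper leaves implicit.
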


The construction outlined in Example~\ref{ex:cap} also works in higher
dimensions using the exact Lagrangian caps of Eliashberg-Murphy
\cite{EliashbergMurphy:caps}, cf.~Appendix~\ref{app:cap}.
In this case it produces Weinstein
subdomains of the standard ball which are presumably \emph{flexible} in the 
sense of \cite{CieliebakEliashberg}.  Recently, Murphy and Siegel 
\cite{MurphySiegel:subflexible} have also found examples of nonflexible 
Weinstein subdomains in the standard ball, whose boundaries therefore also
satisfy the hypothesis of Theorem~\ref{thm:allDimensions}.

\begin{remark}
It is not known whether any contact $3$-manifolds satisfy the hypothesis
of Theorem~\ref{thm:nonprime}\eqref{item:OT} without being overtwisted, though
Andy Wand \cite{Wand:surgery} has proved that the answer is no under the 
stronger condition that the cobordism is Stein.  
Theorem~\ref{thm:nonprime}\eqref{item:OT} may thus
be interpreted as a small measure of support for the conjecture that Wand's
theorem extends to Liouville cobordisms
(cf.~\cite{Wendl:blogSurvey4}*{Question~5}): that is,
Theorem~\ref{thm:nonprime}\eqref{item:OT} provides a mechanism for detecting
tightness, but it cannot detect the (conjecturally nonexistent) 
distinction between an overtwisted contact manifold and one that is only
Liouville cobordant to something overtwisted.
\end{remark}

We remark that the word ``Liouville'' definitely cannot be
dropped from the statements of any of the above theorems:
for instance, any Lagrangian torus in the standard
symplectic $\RR^{2n}$ gives rise to a strong symplectic cobordism from
the unit cotangent bundle of the torus to $(S^{2n-1},\xi\std)$, but one
can easily find contact forms on the former that have no contractible
Reeb orbits, corresponding to metrics on the torus with no contractible
geodesics.  The cobordism of course cannot be Liouville because,
by a well-known theorem of Gromov \cite{Gromov}, the Lagrangian torus
cannot be exact.  Similarly, \cite{Gay:GirouxTorsion} and \cite{Wendl:cobordisms}
show that every contact $3$-manifold with positive Giroux torsion is
strongly symplectically cobordant to something overtwisted, including e.g.~the nonfillable
tight $3$-tori, which admit contact forms without contractible orbits.

\subsection{Applications}
\label{sec:application}

Here is a specific situation in which Theorem~\ref{thm:main} can be used to 
rule out the existence of exact symplectic cobordisms.  Good candidates for 
manifolds that fail to satisfy the conclusion of the theorem are furnished
by the universally tight lens spaces $L(p,q)$ for $p \neq 1$. 
Recall that $L(p,q)$ is defined as the quotient
$$
L(p,q) = S^3 \big/ G_{p,q},
$$
where $G_{p,q} \subset \U(2)$ denotes the cyclic group of matrices 
$\begin{pmatrix} e^{2\pi i k / p} & 0 \\ 0 & e^{2\pi i k q / p} \end{pmatrix}$
for $k \in \ZZ_p$, acting on the unit sphere $S^3 \subset \CC^2$ by unitary
transformations.  This action preserves the standard contact form
$\alpha\std = \frac{1}{2} \sum_{j=1}^2 (x_j \, d y_j - y_j \, d x_j)$
on~$S^3$, written here in coordinates $(z_1,z_2) = (x_1 + i y_1,x_2 + i y_2)$,
so the standard contact structure $\xi\std$ on $L(p,q)$ is defined via this
quotient.

\begin{prop}
\label{prop:lens}
For every relatively prime pair of integers $p > q \ge 1$,
$L(p,q)$ admits a nondegenerate contact form with only two simple closed
Reeb orbits, both of them nondegenerate and noncontractible.
\end{prop}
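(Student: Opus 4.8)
The plan is to realize the required contact form as the descent of an \emph{irrational ellipsoid} contact form on $S^3$. Fix positive reals $a_1, a_2$ with $a_1/a_2 \notin \QQ$, let $f(z) = \pi(a_1|z_1|^2 + a_2|z_2|^2)$, and consider the ellipsoid $E = f^{-1}(1) \subset \CC^2$ with the contact form $\lambda_0|_E$, where $\lambda_0 = \frac12\sum_{j=1}^2 (x_j\,dy_j - y_j\,dx_j)$. The Reeb flow of $\lambda_0|_E$ is the restriction to $E$ of the Hamiltonian flow $\phi^t(z) = (e^{2\pi i a_1 t} z_1, e^{2\pi i a_2 t} z_2)$, and since $a_1/a_2$ is irrational this flow has exactly two simple closed orbits, the circles $C_1 = E \cap \{z_2 = 0\}$ and $C_2 = E \cap \{z_1 = 0\}$, both elliptic and nondegenerate, with all iterates nondegenerate as well. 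Radial projection $z \mapsto z/\sqrt{f(z)}$ defines a contactomorphism $(S^3,\xi\std) \to (E,\ker\lambda_0|_E)$ which is equivariant for the $G_{p,q}$-action, because each element of $G_{p,q} \subset \U(2)$ preserves $|z_1|^2$ and $|z_2|^2$ individually and hence preserves both $E$ and $\lambda_0$. Pulling $\lambda_0|_E$ back to $S^3$ and dividing by $G_{p,q}$ therefore yields a contact form $\alpha$ on $L(p,q)$, in fact defining $\xi\std$, whose Reeb flow is the descent of $\phi^t$.

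Next I would count the closed Reeb orbits of $\alpha$. Because $G_{p,q}$ preserves $\alpha\std$ it commutes with the Reeb flow, so any closed Reeb orbit of $\alpha$ lifts to a Reeb trajectory of $E$ that closes up only after composition with some $g \in G_{p,q}$; iterating the trajectory $|G_{p,q}| = p$ times then produces a genuine closed Reeb orbit of $E$, which must be an iterate of $C_1$ or of $C_2$. Hence the only simple closed Reeb orbits of $\alpha$ are the images $\gamma_1, \gamma_2 \subset L(p,q)$ of $C_1$ and $C_2$; these are distinct and embedded, since $G_{p,q}$ preserves each of $\{z_1 = 0\}$ and $\{z_2 = 0\}$ and acts freely on each $C_i$. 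Moreover each projection $C_i \to \gamma_i$ is a connected $p$-fold covering of circles: $G_{p,q}$ acts on $C_1$ by rotations of $z_1$ through multiples of $2\pi/p$, and on $C_2$ by rotations of $z_2$ through multiples of $2\pi q/p$, which is again a free $\ZZ_p$-action thanks to $\gcd(p,q) = 1$. In particular $\gamma_i$ has minimal period equal to $1/p$ times the period of $C_i$, so it really is a simple orbit.

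It then remains to check nondegeneracy and noncontractibility. For $\gamma_1$, its linearized return map is the composition of the rotation of the transverse $z_2$-plane accumulated by $\phi^t$ over one period of $\gamma_1$ with the action on that plane of the deck transformation that closes up the lifted loop; writing this out gives a rotation by $\frac{2\pi(a_2 + m a_1)}{p a_1}$ for an integer $m$ determined by $q$, and symmetrically the return map of $\gamma_2$ is a rotation by $\frac{2\pi(a_1 + m' a_2)}{p a_2}$. Neither angle — nor that of any iterate — is a multiple of $2\pi$, precisely because $a_1/a_2$ is irrational, so $\gamma_1$ and $\gamma_2$ stay elliptic and nondegenerate and $\alpha$ is a nondegenerate contact form. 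For noncontractibility, since $C_i \to \gamma_i$ is a connected $p$-fold cover and $S^3$ is simply connected, a based loop $\gamma_i^k$ is nullhomotopic in $L(p,q)$ if and only if it lifts to a loop in $S^3$, i.e.\ if and only if $p \mid k$; thus $[\gamma_i]$ has order exactly $p$ in $\pi_1(L(p,q)) \cong \ZZ_p$, and in particular $[\gamma_i] \neq 0$ since $p > 1$.

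The construction is elementary and standard; the only point that demands genuine care is the bookkeeping in the nondegeneracy computation — one must correctly identify which power of the generator of $G_{p,q}$ is the deck transformation associated to a single traversal of $\gamma_i$ and how it acts on the normal plane of the orbit — but once the two rotation numbers are written down, irrationality of $a_1/a_2$ makes nondegeneracy of both orbits and of all their iterates immediate.
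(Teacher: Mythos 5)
Your proof is correct and follows essentially the same route as the paper: present $(L(p,q),\xi\std)$ as the quotient of an irrational ellipsoid, observe that the only simple closed orbits upstairs are the two coordinate circles, and note that their images downstairs are $p$-fold covered, hence noncontractible since $p>1$. The only difference is cosmetic: you spell out the downstairs return-map rotation numbers (which the paper merely asserts descend nondegenerately), a welcome bit of extra care but not a different argument.
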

\begin{proof}
We present $(L(p,q),\xi\std)$ as a quotient of the so-called
\emph{irrational ellipsoid}.
Let $\alpha_H := \frac{1}{H} \alpha\std$ on $S^3$, where $H$ is the restriction
to the unit sphere $S^3 \subset \CC^2$ of the function
$$
H(z_1,z_2) = \frac{|z_1|^2}{a^2} + \frac{|z_2|^2}{b^2}
$$
for some $a , b > 0$.  The closed orbits for the Reeb flow on $S^3$ determined 
by  $\alpha_H$ are then in bijective correspondence with the closed 
orbits on the ellipsoid $H^{-1}(1) \subset \CC^2$ for the Hamiltonian flow of
$H$ on the standard symplectic~$\CC^2$.  In particular, if $a / b$ 
is irrational, then the only simple closed orbits of this flow are
(up to parametrization) the embedded loops $\gamma_1, \gamma_2 : S^1 \to S^3
\subset \CC^2$ defined by
$$
\gamma_1(t) = (e^{2\pi it},0), \qquad \gamma_2(t) = (0,e^{2\pi it})
$$
for $t \in S^1 = \RR / \ZZ$, and moreover, these orbits and their multiple
covers are all nondegenerate.  Now since $\alpha\std$ and $H$ are both 
invariant under the action of $\U(1) \times \U(1) \subset \U(2)$, which 
contains $G_{p,q}$, $\alpha_H$ descends to a well-defined contact form on 
$L(p,q)$, and this contact form is nondegenerate.  But the orbits $\gamma_1$ 
and $\gamma_2$
project to orbits in $L(p,q)$ that are $p$-fold covered, so their underlying
simple orbits lift to the universal cover $S^3 \to L(p,q)$ as non-closed
paths since $p > 1$, hence they are noncontractible.
\end{proof}

\begin{cor}
For every pair of relatively prime integers $p > q \ge 1$,
$(L(p,q), \xi\std)$ admits no exact cobordism to $(S^3,\xi\std)$. 
\end{cor}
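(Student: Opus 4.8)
The plan is to deduce this immediately from Theorem~\ref{thm:main} and Proposition~\ref{prop:lens} by contradiction. Suppose $(L(p,q),\xi\std)$ admitted an exact symplectic cobordism to $(S^3,\xi\std)$; since for cobordisms between contact manifolds an exact symplectic cobordism is the same thing as a Liouville cobordism, Theorem~\ref{thm:main} would then apply to every nondegenerate contact form on $(L(p,q),\xi\std)$ and produce, in each case, a simple closed Reeb orbit $\gamma$ whose image is the boundary of an embedded disk $\dD \subset L(p,q)$. In particular $\gamma$ would be contractible, since it bounds a disk.

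On the other hand, Proposition~\ref{prop:lens} exhibits a \emph{particular} nondegenerate contact form on $(L(p,q),\xi\std)$ --- the one inherited from $\alpha_H$ with $a/b$ irrational --- all of whose simple closed Reeb orbits, of which there are exactly two, are noncontractible. No orbit of this contact form can therefore bound an embedded disk, contradicting the conclusion of the previous paragraph. Hence the assumed cobordism cannot exist.

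I do not anticipate any genuine difficulty here: the entire content is already contained in Theorem~\ref{thm:main} and Proposition~\ref{prop:lens}, and the only two small points worth spelling out are (i) that an ``exact cobordism'' in the sense of the corollary is precisely a Liouville cobordism, so that Theorem~\ref{thm:main} is indeed applicable, and (ii) that a Reeb orbit bounding an embedded disk in $L(p,q)$ is nullhomotopic, which is exactly what disqualifies the two simple orbits $\gamma_1, \gamma_2$ of the irrational-ellipsoid contact form --- these project from the embedded loops in $S^3$ to $p$-fold covered orbits, so their underlying simple orbits lift to non-closed paths in the universal cover $S^3 \to L(p,q)$ and are therefore noncontractible.
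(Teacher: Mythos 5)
Your proposal is correct and is exactly the argument the paper intends: the corollary is stated without a separate proof precisely because it follows immediately from Theorem~\ref{thm:main} combined with the nondegenerate contact form of Proposition~\ref{prop:lens}, whose only two simple closed Reeb orbits are noncontractible and hence cannot bound embedded disks. Your two clarifying remarks (``exact'' meaning Liouville, and disk-bounding implying nullhomotopic) are the right points to spell out.
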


\begin{remark}
The Reeb flow on any universally tight $L(p,q)$ admits a contractible Reeb 
orbit since $\pi_1(L(p,q))$ is torsion, so 
previously known criteria for excluding such cobordisms do not apply.
\end{remark}

While the lens space example is relatively easy to work with, the 
nondegeneracy of a contact form is usually a rather difficult condition to 
check, and for this reason one might sometimes want to have the following
technical enhancement of Theorems~\ref{thm:main} and~\ref{thm:nonprime}.
It will be an immediate consequence of our proofs, requiring only that one
pay closer attention to the
relationship between periods of orbits and energies of holomorphic curves.

\begin{thm}
\label{thm:technical}
Assume $(M,\xi)$ satisfies the hypotheses of either Theorem~\ref{thm:main}
or Theorem~\ref{thm:nonprime}, and fix a contact form $\alpha_0$ for
$(M,\xi)$.  There exists a constant $T > 0$, dependent on~$\alpha_0$,
such that the following holds: suppose $\alpha = f \alpha_0$ is a contact
form on $(M,\xi)$ such that
\begin{enumerate}
\item $f : M \to (0,\infty)$ satisfies $f < T$, and
\item All closed Reeb orbits for $\alpha$ with period less than $T$
are nondegenerate.
\end{enumerate}
Then the Reeb flow of $\alpha$ satisfies the conclusions of 
Theorems~\ref{thm:main} or~\ref{thm:nonprime} respectively, and the 
unknotted orbit can be assumed to have period less than~$T$.
\end{thm}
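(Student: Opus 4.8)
The plan is to observe that the proofs of Theorems~\ref{thm:main} and~\ref{thm:nonprime} are \emph{local in the action}: in each case the unknotted orbit $\gamma$ is extracted from the $\SFT$-compactification of a moduli space of nicely embedded $J$-holomorphic planes in a completed exact symplectic cobordism built from the given data --- the Liouville cobordism $W$ for Theorem~\ref{thm:main}, a reducing sphere or a Liouville cobordism to an overtwisted manifold for Theorem~\ref{thm:nonprime} --- together with a standard local model, namely the ball $B^4$ when the target is $(S^3,\xi\std)$. Every Reeb orbit that plays a role in the argument, whether as an asymptotic limit of a curve in the moduli space, as a breaking orbit in one of its limit buildings, or as $\gamma$ itself, arises as an asymptotic limit of a curve whose $\omega$-energy is controlled a priori; so the argument uses nondegeneracy of Reeb orbits only up to an action threshold fixed by that bound, and that threshold, suitably normalized, will be our $T$.

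The first step is to make the energy bound explicit. By exactness and Stokes' theorem, every curve in the moduli space, and every component of every building in its compactification, has $\omega$-energy bounded in terms of the $d\lambda$-areas of the compact part of the completion and of the standard local model, which are topological quantities once the completion is fixed. The resulting bound, for the completion built over $(M,\alpha_0)$, is a constant $C_0 = C_0(\alpha_0,W)$; consequently every Reeb orbit appearing in that run of the argument has period at most $C_0$.

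The second step is to re-run the argument with $\alpha = f\alpha_0$ in place of $\alpha_0$. The point is that, because $f < T$, one can build the analogous completed symplectic manifold having $(M,\alpha)$ --- up to an overall constant rescaling, which alters neither the conclusions nor hypothesis~(2), since it merely reparametrizes the Reeb flow and multiplies all periods by a fixed factor --- as the relevant boundary: one inserts a Liouville collar interpolating between the reference contact form and $\alpha$, or equivalently cuts the completion along a suitable contact-type hypersurface in its cylindrical end over $M$, and the bound $f < T$ is precisely what makes this possible while keeping the added $d\lambda$-area under control (that added area being governed by $\int_M f^2\,\alpha_0 \wedge d\alpha_0$, hence of order $T^2$). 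Choosing $T$ small enough --- and rescaling the Liouville structures on $W$ and on the standard local model by a fixed constant beforehand, if need be --- one arranges that the $\omega$-energy of every curve in the moduli space over $(M,\alpha)$, and hence the period of every Reeb orbit of $\alpha$ that appears, stays below $T$; by hypothesis~(2) these orbits are then all nondegenerate. With this in hand the proofs of Theorems~\ref{thm:main} and~\ref{thm:nonprime} apply word for word to the Reeb flow of $\alpha$ --- $\SFT$ compactness, the asymptotic and Conley-Zehnder index computations, and the intersection-theoretic input, including the local adjunction formula for breaking annuli, all involve only sub-threshold orbits --- and produce an unknotted orbit $\gamma$ with the prescribed Conley-Zehnder index and $\selflinking = -1$, realized as an asymptotic limit of a curve of $\omega$-energy below $T$, hence of period below $T$.

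The step I expect to require the most care, and the only content genuinely new relative to the proofs of Theorems~\ref{thm:main} and~\ref{thm:nonprime}, is pinning down two quantitative points at once: that the completed symplectic manifold over $(M,\alpha)$ can be built with its $\omega$-energy bound kept below $T$ for \emph{every} $f$ subject only to $f < T$ --- here the interplay between the overall rescaling freedom and the constraint $f < T$ must be tracked with some attention, and the precise admissible value of $T$ depends on how the cylindrical end and the standard local model are normalized --- and that no step of the original proofs covertly invokes nondegeneracy, index positivity, $\SFT$ compactness, or finiteness of a moduli space for orbits of action above the threshold. Neither poses a genuine difficulty, but together they are what fixes the constant $T$.
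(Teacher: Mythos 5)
Your strategy is the one the paper itself has in mind (the paper proves Theorem~\ref{thm:technical} only by remarking that the proofs of Theorems~\ref{thm:main} and~\ref{thm:nonprime} use nondegeneracy only for orbits whose period is controlled a priori), and the correct mechanism is the one you gesture at: by Stokes' theorem in an exact cobordism, every negative asymptotic or breaking orbit of a building whose top is asymptotic to $\gamma_\infty$ has $\alpha$-period at most the action of $\gamma_\infty$ with respect to the contact form at the positive end. The genuine gap sits exactly in the step you defer to the last paragraph and then declare unproblematic. That action is a fixed constant of the set-up (e.g.\ $\pi$ times the scale of the form on the $S^3$-end); it does not shrink with $T$, and ``rescaling the Liouville structures by a fixed constant beforehand'' cannot reconcile it with the constraint $f<T$: rescaling $\lambda$ multiplies the action of $\gamma_\infty$ and the largest admissible bound on $f$ (namely $\min_M\big(\lambda|_{TM}/\alpha_0\big)$, which is what is needed to fit the graph of $f$ under the cobordism in the negative end) by the same factor, so their ratio is scale-invariant and cannot be pushed where you need it by rescaling. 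Nor is $\int_M f^2\,\alpha_0\wedge d\alpha_0\sim T^2$ the quantity that bounds periods: the Stokes bound is governed by the positive asymptotic orbit, not by the area of the inserted collar, so the periods of the orbits that appear do not become small just because $f$ does.

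What your outline honestly proves, once the Stokes bound is made explicit, is a two-constant statement: there is a constant $c$ bounding $f$ (of the order of $\min_M(\lambda|_{TM}/\alpha_0)$, ensuring $\alpha$ fits under the fixed cobordism) and a separate constant $T$ (the action of $\gamma_\infty$ at the positive end) such that nondegeneracy of all orbits of period less than $T$ yields the unknotted orbit with period less than $T$; in general $c<T$ and the two cannot be merged by any normalization. Indeed, with a single constant the statement resists proof for a concrete reason: on $(S^3,\xi\std)$ with $\alpha_0=\alpha\std$, take $\alpha=\epsilon\alpha\std$ with $T/\pi\le\epsilon<T$; then $f<T$, every closed orbit has period $k\epsilon\pi\ge T$ so hypothesis (2) holds vacuously, and there is no orbit at all of period less than $T$. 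So you should either prove and use the two-constant version, or supply a genuinely new argument for why one constant suffices --- the rescaling claim as written does not. (Your remaining point, that SFT compactness, the index computations and the intersection-theoretic input only ever involve orbits below the action threshold, is correct and standard, and is indeed all the original proofs need.)
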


One could apply this in practice if e.g.~$\alpha_0$ is Morse-Bott and admits
no unknotted Reeb orbits, as then one can define perturbations of $\alpha_0$
as in \cite{Bourgeois:thesis} whose orbits up to some arbitrarily large
period are nondegenerate and still knotted---the topology of orbits with
large period may be harder to control, but for Theorem~\ref{thm:technical}
this does not matter.

\begin{remark}
We have chosen to adopt a mainly contact topological perspective on the
main theorems of this paper, but for other purposes 
(e.g.~quantitative Reeb dynamics, cf.~\cite{GeigesZehmisch:4ball}*{\S 3.23}),
one could also state
more quantitatively precise versions of Theorem~\ref{thm:technical}.
\end{remark}

Note that no such enhancement is necessary for Theorem~\ref{thm:allDimensions},
which does not require nondegeneracy, see Remark~\ref{remark:degenerate}.

\subsection{Outline of proofs, part~1: seed curves and compactness}
\label{sec:proof}

All proofs of theorems in this paper follow a similar scheme, which in the
case of Theorems~\ref{thm:main} and~\ref{thm:allDimensions} can be
described as follows.  Suppose $(W,d\lambda)$ is a Liouville cobordism
from $(M,\xi)$ to a standard contact sphere $(S^{2n-1},\xi\std)$, and
let $(\overline{W},d\lambda)$ denote the completion obtained by attaching
cylindrical ends in the standard way (see \S\ref{sec:curves}).
Then the positive end of $\overline{W}$ can be assumed to match the
top half of the symplectization 
\begin{equation}
\label{eqn:symplectizationStandard}
\left(\RR \times S^{2n-1},d(e^r \alpha\std)\right),
\end{equation}
where $\alpha\std$ is the standard contact form, defined by restricting the 
Liouville form $\lambda\std := \sum_{j=1}^n (x_j \, dy_j - y_j \, dx_j)$
to the unit sphere.  We will assume also that the negative end matches
$\left((-\infty,0] \times M, d(e^r \alpha)\right)$ 
where $\alpha$ is (after a positive
rescaling) an arbitrary nondegenerate contact form for $(M,\xi)$.
(The nondegeneracy assumption was not included in 
Theorem~\ref{thm:allDimensions}, but this assumption will be easy to remove
in the final step, see Remark~\ref{remark:degenerate} below.)

The first step in the proof
is then to choose a suitable almost complex structure $J$ on the
symplectization \eqref{eqn:symplectizationStandard} that admits a foliation by a $(2n-2)$-dimensional family
of $J$-holomorphic planes, 
so-called ``seed curves,'' which are asymptotic to a fixed Reeb orbit $\gamma$
for $\alpha\std$ that has the smallest possible period.  We will be able
to verify explicitly that these planes are \emph{Fredholm regular}
for the moduli problem with fixed asymptotic orbit, hence the moduli
space is cut out transversely, and moreover, there exist no other curves
in $\RR \times S^{2n-1}$ with a single positive end approaching~$\gamma$.
Once these curves are understood, we can regard them as living in the
cylindrical end $[0,\infty) \times S^{2n-1} \subset \overline{W}$, so after
extending $J$ to a compatible almost
complex structure on the rest of $(\overline{W},d\lambda)$, they generate
a nonempty moduli space $\mM(J)$ of unparametrized $J$-holomorphic planes
in $\overline{W}$, all asymptotic to the same simply covered 
Reeb orbit in the sphere,
and this moduli space is a smooth $(2n-2)$-dimensional manifold for
generic extensions of $J$ since all curves in $\mM(J)$ are somewhere
injective.  Our main task is then to understand the natural compactification
$\overline{\mM}(J)$ of~$\mM(J)$, that is to say, the closure of $\mM(J)$
in the space of stable $J$-holomorphic buildings in the sense of \cite{SFTcompactness}.
Recall that a $J$-holomorphic building in a cobordism may have multiple
\emph{levels}, including one \defin{main level} which is a (possibly empty) curve in the completed
cobordism, and arbitrary finite numbers of \defin{upper levels} living
in the symplectization of the convex boundary and \defin{lower levels} living
in the symplectization of the concave boundary.
The uniqueness of the seed curves in the positive end implies the following:

\begin{lem}
\label{lemma:noUpperLevel}
If $u \in \overline{\mM}(J)$ is a stable holomorphic building with a nontrivial
upper level, then it has exactly one upper level, which consists of one of
the seed curves in $\RR \times S^{2n-1}$, and all its other levels are
empty.
\qed
\end{lem}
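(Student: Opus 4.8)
The plan is to analyze the SFT-compactness structure of a building $u \in \overline{\mM}(J)$ under the hypothesis that it has a nontrivial upper level, i.e.\ a level living in the symplectization $\RR \times S^{2n-1}$ that is not just a union of trivial cylinders. A building in $\overline{\mM}(J)$ is asymptotic at its unique positive puncture to the fixed simple Reeb orbit $\gamma$, which by construction has the smallest possible period among all Reeb orbits of $\alpha\std$. First I would recall that the positive end of $\overline{W}$ has been identified with the top half of the symplectization of $(S^{2n-1},\xi\std)$, so any upper level of $u$ is a (possibly disconnected, possibly nodal) punctured $J$-holomorphic curve in $\RR \times S^{2n-1}$, and the topmost level carries the positive end of the whole building, hence has exactly one positive puncture asymptotic to $\gamma$.

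The key step is then the uniqueness statement established just before this lemma: in $\RR \times S^{2n-1}$ with the chosen $J$, the only $J$-holomorphic curve with a single positive end approaching $\gamma$ is one of the seed planes, which has no negative punctures. So the topmost level of $u$ must consist of exactly one seed plane (together with no other components, since any extra connected component would need its own positive puncture, and by the minimality of the period of $\gamma$ together with the action/energy inequality for punctured curves there is no room for additional positive ends — every positive end of a level below the top is paired with a negative end of the level above, and the seed plane has none). Because the seed plane has no negative punctures, there can be no level beneath it other than what is forced by the matching conditions: its single positive end is $\gamma$ itself (feeding the positive end of the building), and it has no negative ends to attach a lower level to. Hence all levels below the seed-plane level must be empty, and there is exactly one nontrivial upper level. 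I would phrase ``upper level'' carefully to mean a level strictly above the main level sitting in $\overline{W}$; the conclusion is that this single upper level is a seed curve and the main level (and anything below) is empty.

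The main obstacle I anticipate is the bookkeeping needed to rule out extra connected components or extra intermediate levels purely from the action/energy balance and the minimality of the period of $\gamma$: one must argue that if the building had two or more nontrivial upper levels, or an upper level with a component other than a single seed plane, then somewhere there would be a curve in $\RR \times S^{2n-1}$ with a single positive puncture at some Reeb orbit of period $\le \mathcal A(\gamma)$, hence (by minimality) again at $\gamma$, hence again a seed plane with no negative punctures — which then forces the level below to be empty, a contradiction with it being ``intermediate.'' This is a finite combinatorial descent on the levels; the only subtlety is ensuring that trivial cylinders in intermediate levels are correctly accounted for, and that the degeneration respects the fact that $\gamma$ is simply covered so no multiply covered orbit of smaller period can appear. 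Once this descent is carried out, the statement follows, and indeed the cited uniqueness of the seed curves in the positive end is exactly the input that makes it go through — so in the write-up I would simply invoke that uniqueness and give the one-line energy/period argument, leaving the routine level-by-level verification to the reader.
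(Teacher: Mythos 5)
Your proposal is correct and is essentially the argument the paper intends: the lemma is stated with no written proof precisely because it follows immediately from the uniqueness of curves with a single positive end at $\gamma_\infty$ (Lemma~\ref{lemma:uwUniqueness}) combined with the structure of limit buildings from Proposition~\ref{prop:tree} (each component has exactly one positive puncture, and the top level is connected), so that the topmost level is a seed plane with no negative ends and everything below is forced to be empty. The one point to make explicit in a write-up is that Lemma~\ref{lemma:uwUniqueness} also allows the trivial cylinder over $\gamma_\infty$, which you should rule out as a possible top level by the stability requirement on buildings (no level consists solely of trivial cylinders) rather than by the energy/period descent alone.
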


The lemma means that the only way for a sequence of planes in
$\mM(J)$ to ``degenerate'' with something nontrivial happening at the positive
end is if the planes simply escape into the positive end and become seed 
curves; in particular, this cannot happen to any sequence of planes that have
points falling into the negative end.  Theorem~\ref{thm:allDimensions} can
now be proved as follows.  Let $\mM_1(J)$ denote the smooth $2n$-dimensional
moduli space consisting of curves in $\mM(J)$ with the additional data of
a marked point, hence there is a well-defined evaluation map
$$
\ev : \mM_1(J) \to \overline{W}.
$$
Choose a smooth properly embedded $1$-dimensional submanifold
$\ell \subset \overline{W}$ with one end in $[0,\infty) \times S^{2n-1}$
and the other in $(-\infty,0] \times M$, and perturb it to be transverse
to the evaluation map.  Then
$$
\mM_\ell(J) := \ev^{-1}(\ell)
$$
is a smooth $1$-dimensional manifold, and it has a unique connected
component $\mM_\ell^0(J) \subset \mM_\ell(J)$ that contains seed curves
in the positive end.  This component has a noncompact end consisting of a
family of seed curves that escape to $+\infty$, thus it
is manifestly noncompact and therefore diffeomorphic to~$\RR$.
We claim now that $\mM_\ell^0(J)$ must also
contain curves with points that descend arbitrarily far into the
negative end.  Indeed, the SFT compactness theorem would otherwise
imply that every sequence in $\mM_\ell^0(J)$ has a subsequence convergent
to either an element of $\mM_\ell^0(J)$ or a holomorphic building of the
type described in Lemma~\ref{lemma:noUpperLevel}.  But the latter can only
happen if the sequence escapes through the neighbourhood of $+\infty$
in which all curves are seed curves.  In particular, we obtain a contradiction
by considering a noncompact sequence escaping to the \emph{opposite} end
of $\mM_\ell^0(J) \cong \RR$ from the one consisting of seed curves,
and this proves the claim.
It follows that one can find a sequence
$u_k \in \mM_\ell(J)$ of curves converging to a holomorphic building
$u_\infty \in \overline{\mM}(J)$ with a nontrivial lower level
(see Figure~\ref{fig:planes}).
Since the cobordism is exact, every component curve in $u_\infty$ must
have exactly one positive end, and it follows that at least one of the
curves in a lower level of $u_\infty$ is a plane, whose asymptotic orbit
is the contractible Reeb orbit promised by Theorem~\ref{thm:allDimensions}.

\begin{figure}
\includegraphics[width=5in]{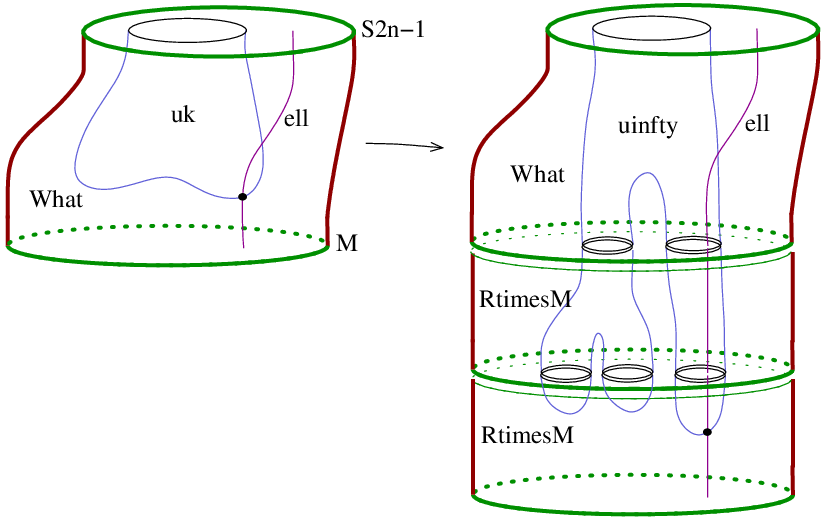}
\caption{\label{fig:planes} When holomorphic planes in an exact cobordism
converge to a holomorphic building with nontrivial lower levels,
at least one of them must include a plane.}
\end{figure}

\begin{remark}
\label{remark:degenerate}
To remove the nondegeneracy assumption from Theorem~\ref{thm:allDimensions},
one can take advantage of the fact that due to the exactness of the
cobordism, the contractible orbit found in the above argument comes with
an a priori bound on its period.  Then if $\alpha$ is a degenerate contact
form on $(M,\xi)$ approximated by a sequence $\alpha_k$ of nondegenerate
contact forms, the above argument gives a sequence $\gamma_k$ of contractible
Reeb orbits with respect to $\alpha_k$ whose periods are uniformly bounded,
so by Arzel\`a-Ascoli, these have a subsequence convergent to a contractible
Reeb orbit with respect to~$\alpha$.  Note that if the orbits $\gamma_k$
are also unknotted, it is not so clear whether the limiting orbit will also
be unknotted, hence the need for the more technical
Theorem~\ref{thm:technical}.
\end{remark}

\subsection{Outline of proofs, part~2: intersections}

The argument described thus far is quite standard and, as mentioned earlier,
is largely attributable to Hofer \cite{Hofer:weinstein}
(though the use of the path $\ell \subset \overline{W}$ to define a
$1$-dimensional submanifold of the moduli space is borrowed from
Niederkr\"uger \cite{Plastikstufe}).  The arguments required for finding
an orbit that is not only contractible but also \emph{unknotted} are
significantly subtler, and here we must make liberal use of
Siefring's intersection theory \cite{Siefring:intersection} in the
low-dimensional setting.

To explain the idea, we briefly recall the notion of \defin{nicely embedded} 
holomorphic curves, introduced by the second author in 
\cites{Wendl:compactnessRinvt,Wendl:automatic}.  The precise definition
will be reviewed in \S\ref{sec:intersection}, but in essence, a 
holomorphic curve $u : \dot{\Sigma} \to \overline{W}$ in a completed 
$4$-dimensional symplectic cobordism $\overline{W}$ is
nicely embedded if it has the necessary intersection-theoretic properties
to guarantee that it \emph{does not intersect its neighbors} in the moduli space.
This condition implies that the moduli space near~$u$ can be at most
$2$-dimensional, and in the $2$-dimensional case the
curves near $u$ form the leaves of a foliation on a neighbourhood of 
$u(\dot{\Sigma})$ in~$\overline{W}$.  If $\overline{W}$ is a symplectization
$\RR \times M$ or the image of $u$ is confined to a cylindrical end, then
being nicely embedded has the additional implication that $u$ projects
to an embedding into~$M$, i.e.~$u$ can be written as
$$
u = (u_\RR,u_M) : \dot{\Sigma} \to \RR \times M,
$$
where the map $u_M : \dot{\Sigma} \to M$ is also an embedding.
It is easy to show that the seed curves we find in the symplectization
of $(S^3,\xi\std)$ are nicely embedded, and the homotopy invariance of the
intersection theory then implies that the same is true for all
curves in~$\mM(J)$.

The fundamental principle behind
the proof of Theorems~\ref{thm:main} and~\ref{thm:nonprime} is then the
notion that ``nice curves degenerate nicely,'' i.e.~if a sequence
$u_k \in \mM(J)$ converges to a holomorphic building
$u_\infty \in \overline{\mM}(J)$, then we should expect the component
curves in levels of $u_\infty$ to be nicely embedded.  This statement
as such is false in full generality (see 
\cite{Wendl:automatic}*{Example~4.22 and Remark~4.23} for counterexamples),
but we will show that it is true in the present situation.
As a consequence, the plane we find in a lower level of $u_\infty$ has
the form $(u_\RR,u_M) : \CC \to \RR \times M$, where
$u_M : \CC \to M$ is an embedding asymptotic to a contractible Reeb orbit.

There remains one complication: the fact that $u : \CC \to \RR \times M$
is nicely embedded does not guarantee that its asymptotic orbit must be
simply covered, i.e.~the image of $u_M : \CC \to M$ might look like an
immersed disk that is embedded on the interior but multiply covered
on its boundary.  We will show in fact that this can happen, but only
in very specific ways, and to prove it, we develop a ``local adjunction
formula'' for holomorphic annuli breaking along a Reeb orbit.

\subsection{Local adjunction}

We now briefly interrupt the outline of the proof to describe a tool of
more general applicability.  To set the stage, suppose that 
$\alpha_k \to \alpha_\infty$ is a $\mc{C}^\infty$-convergent sequence of 
contact forms on a $3$-manifold~$M$, and $J_k \to J_\infty$ is a corresponding 
sequence with each $J_k$ belonging to the usual space (see \S\ref{sec:definitions})
of admissible translation-invariant almost complex structures on the symplectization 
$(\RR \times M,d(e^r \alpha_k))$.  Assume then that
$$
u_k : ([-k,k] \times S^1,i) \to (\RR \times M,J_k)
$$
is a sequence of pseudoholomorphic annuli which are converging in the
sense of SFT compactness to a broken $J_\infty$-holomorphic curve
$$
u_k \to ( u_\infty^+ | u_\infty^- ),
$$
where $u_\infty^+$ is the top level with a negative puncture, 
and $u_\infty^-$ is the bottom level with a
positive puncture, both asymptotic to the same nondegenerate Reeb orbit
$\gamma$ with covering multiplicity~$m(\gamma)$.  It is natural to choose
holomorphic cylindrical coordinates around these punctures and thus
parametrize the two levels in the form
\begin{equation*}
\begin{split}
u_\infty^+ : ((-\infty,0] \times S^1,i) &\to (\RR \times M,J_\infty), \\
u_\infty^- : ([0,\infty) \times S^1,i) &\to (\RR \times M,J_\infty),
\end{split}
\end{equation*}
so that the two half-cylinders together can be regarded as a broken
holomorphic annulus arising as a limit of the finite (but increasingly long)
holomorphic annuli~$u_k$; see Figure~\ref{fig:annulus}.  This is intended as
a local picture of the neighbourhood of a breaking orbit as a sequence
of smooth finite energy curves converges to a holomorphic building
as in \cite{SFTcompactness}.  

Recall from \cite{Siefring:asymptotics} that 
for any finite energy punctured holomorphic curve that is not a multiple
cover, sufficiently small neighbourhoods of each puncture are always
embedded, hence if $u_\infty^+$ and $u_\infty^-$ are not multiply covered
then we are free to assume without loss of generality that both are
embedded.  This implies that each $u_k$ is also embedded near the boundary
of $[-k,k] \times S^1$ for sufficiently large~$k$, but if $m(\gamma) > 1$,
then $u_k$ can have finitely many double points and critical points that 
``disappear into the breaking orbit'' in the limit.
See  \S\ref{sec:intersection} for precise definitions of each of the quantities
discussed below.  We let
$$
\delta(u_k) \ge 0
$$
denote the algebraic count of double points and critical points
of~$u_k$: this is a nonnegative integer that equals zero if and only if
$u_k$ is embedded.  The half-cylinders $u_\infty^\pm$ are embedded by
assumption, but if $m(\gamma) > 1$, then they may have 
``hidden double points at infinity'' in
the sense of \cite{Siefring:intersection}, i.e.~double points that must
emerge from infinity under generic perturbations of the curves.
We denote the algebraic counts of these hidden double points by
$$
\delta_\infty(u_\infty^\pm) \ge 0;
$$
they are nonnegative integers that vanish if and only if generic perturbations
of $u_\infty^\pm$ remain embedded.  We denote by
$$
\bar{\sigma}_\pm(\gamma) \ge 1
$$
the so-called \defin{spectral covering numbers} of~$\gamma$ as in
\cite{Siefring:intersection}: these are covering multiplicities
of certain asymptotic eigenfunctions of $\gamma$, and are thus positive
integers that equal~$1$ if and only if those eigenfunctions are simply
covered (which is always the case e.g.~if $m(\gamma)=1$).  
For one last piece of notation, we let
$$
p(\gamma) \in \{0,1\}
$$
denote the \defin{parity} of~$\gamma$, i.e.~its Conley-Zehnder index modulo~$2$.
The result we will prove in \S\ref{sec:localAdjunction} 
can now be stated as follows.

\begin{figure}
\includegraphics[width=4in]{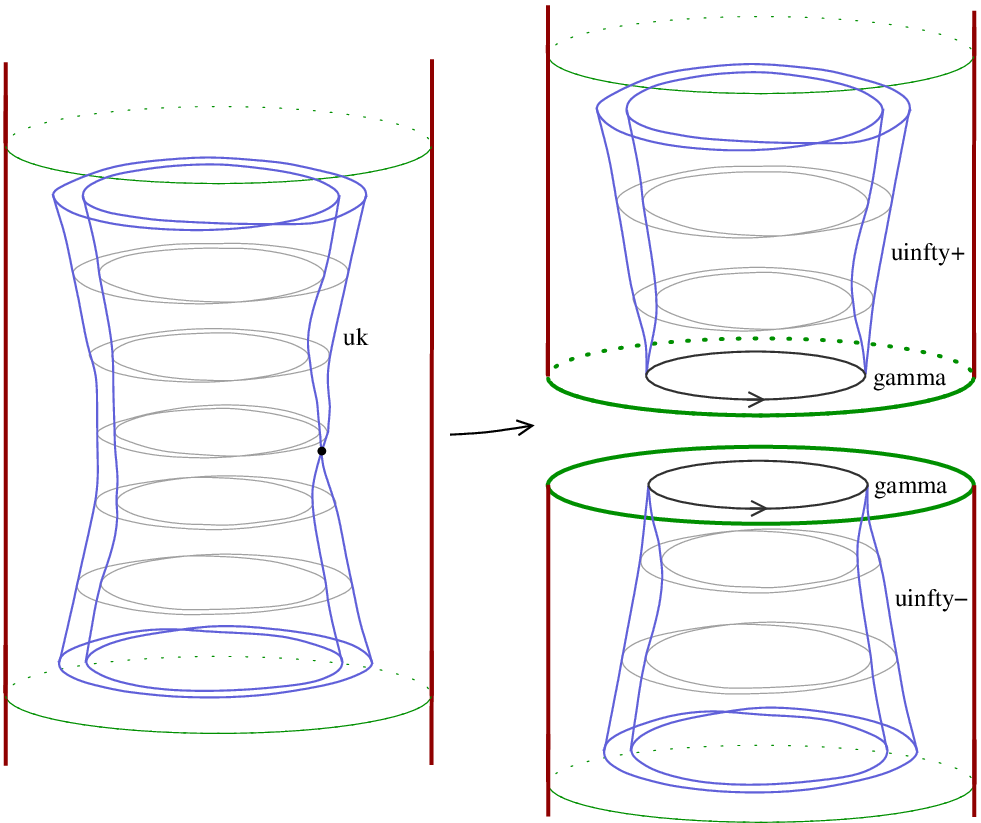}
\caption{\label{fig:annulus} A sequence of pseudoholomorphic annuli $u_k$
converging to a broken annulus consisting of embedded half-cylinders
$u_\infty^\pm$ asymptotic to a doubly covered breaking orbit~$\gamma$.
In this case, $u_k$ can have double points that disappear in the limit.}
\end{figure}

\begin{thm}[local adjunction]
\label{thm:localAdjunction}
In the setting described above, 
assume $u_k \to (u_\infty^+ | u_\infty^-)$ 
is a sequence of holomorphic annuli in $\RR \times M$ converging to a
broken pair of half-cylinders, where $u_\infty^+$ and
$u_\infty^-$ are both embedded and asymptotic to a nondegenerate
Reeb orbit $\gamma$
with covering multiplicity $m(\gamma)$, parity $p(\gamma)$ and spectral
covering numbers $\bar{\sigma}_\pm(\gamma)$.  Then for all $k$ sufficiently
large,
\begin{multline*}
2\delta(u_k) = 2[ \delta_\infty(u_\infty^+) + \delta_\infty(u_\infty^-) ]
+ \left[ \bar{\sigma}_+(\gamma) - 1 \right] \\
 \quad + \left[ \bar{\sigma}_-(\gamma) - 1 \right]
+ \left[m(\gamma) - 1 \right] p(\gamma).
\end{multline*}
\end{thm}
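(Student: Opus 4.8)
The plan is to deduce the formula from Siefring's intersection theory and asymptotic analysis for punctured holomorphic curves. The guiding principle is that for $k$ large every double or critical point of $u_k$ lies in a ``neck'' region around the breaking orbit $\gamma$, where $u_k$ restricts to a braid of $m(\gamma)$ strands whose amount of twisting is dictated by the asymptotics of $u_\infty^+$, of $u_\infty^-$, and by the spectral data of $\gamma$; the three terms on the right-hand side should then correspond to these three sources of twisting.

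To set things up I would fix a unitary trivialization $\tau$ of $\gamma^*\xi$ and recall from \cite{Siefring:asymptotics} that, $\gamma$ being nondegenerate, near its puncture each half-cylinder $u_\infty^{\pm}$ is, after an $\RR$-translation, an exponentially small perturbation of the trivial cylinder over $\gamma$, governed by a leading asymptotic eigenfunction of the asymptotic operator of $\gamma$; the relative winding number and covering multiplicity of this eigenfunction, together with the extremal winding numbers of the asymptotic operator, the spectral covering numbers $\bar\sigma_{\pm}(\gamma)$ and the parity $p(\gamma)$, are precisely the ingredients of the definition of $\delta_\infty(u_\infty^{\pm})$ in \cite{Siefring:intersection}. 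Using \cite{SFTcompactness}, for $k$ large I would then split the domain $[-k,k]\times S^1$ of $u_k$ into two ``outer'' annuli, on which $u_k$ is $C^1$-close (up to $\RR$-translation) to the appropriate ends of $u_\infty^-$ and $u_\infty^+$, and a ``neck'' annulus in between, over which $u_k$ is $C^0$-close to a long piece of the trivial cylinder over $\gamma$ and hence restricts to a braid of $m(\gamma)$ strands transverse to the hypersurfaces $\{r\}\times M$.

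Next I would carry out the computation. By positivity of intersections and the standard local structure of pseudoholomorphic curves, the outer annuli are embedded (being $C^1$-close to the embedded $u_\infty^{\pm}$), so all of $\delta(u_k)$ is concentrated in the neck braid. The signed crossing count of that braid measures how much it must twist to interpolate between the asymptotic braid of $u_\infty^+$ at one end of the neck and that of $u_\infty^-$ at the other, and this twist splits into three pieces: the amount by which the asymptotic braid of $u_\infty^+$ is already ``pre-twisted'' relative to the extremal eigenfunctions of the asymptotic operator, which by the very definition of the hidden double-point count equals $\delta_\infty(u_\infty^+)$; the analogous quantity $\delta_\infty(u_\infty^-)$ at the other end; and a remaining interpolation cost between the two extremal braids that depends only on $\gamma$. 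The standard arithmetic of winding numbers, spectral covering numbers and Conley--Zehnder indices of iterated Reeb orbits---via the recursion formulas for $\muCZ$ together with $p(\gamma) = \muCZ(\gamma)\bmod 2$---evaluates this last cost and shows that twice the total crossing count equals $2[\delta_\infty(u_\infty^+)+\delta_\infty(u_\infty^-)]+[\bar\sigma_+(\gamma)-1]+[\bar\sigma_-(\gamma)-1]+[m(\gamma)-1]p(\gamma)$, which is the asserted identity.

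The step I expect to be the main obstacle is making the neck contribution rigorous and evaluating it. Since $u_k$ has boundary rather than punctures, Siefring's adjunction formula does not apply to it directly, so one cannot merely invoke the punctured adjunction identity; instead one must either genuinely localize the self-intersection count---defining a relative self-intersection number $u_k * u_k$ with respect to the framings induced by the ends of $u_\infty^{\pm}$, proving a relative adjunction identity $u_k * u_k = 2\delta(u_k) + (\text{boundary Maslov correction})$ from the Micallef--White local model, and evaluating $u_k * u_k$ from the broken configuration by homotopy invariance---or else perform an explicit linking computation for the approximate gluing solutions in a tubular neighbourhood of $\gamma$. In either approach the delicate part is the writhe bookkeeping at $\gamma$: it forces $[\bar\sigma_+(\gamma)-1]+[\bar\sigma_-(\gamma)-1]+[m(\gamma)-1]p(\gamma)$ to be even, which is a nontrivial arithmetic fact about the extremal eigenfunctions and covering multiplicities of $\gamma$ that must be checked along the way.
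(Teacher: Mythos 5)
Your proposal is essentially the paper's proof: the second route you sketch for your ``main obstacle''---a trivialization-dependent relative self-intersection number, a relative adjunction identity in the Micallef--White/Hutchings style, and evaluation of that number from the broken configuration by additivity---is exactly what the paper does, fixing a trivialization $\Phi$ of $\xi$ along $\gamma$, proving $u_k \bullet_\Phi u_k = 2\delta(u_k) + c_1^\Phi(u_k^*T(\RR \times M))$ and $u_\infty^\pm \bullet_\Phi u_\infty^\pm = 2\delta(u_\infty^\pm) + c_1^\Phi((u_\infty^\pm)^*T(\RR \times M)) + i_\infty^\Phi(u_\infty^\pm)$, and using the additivity of both $\bullet_\Phi$ and $c_1^\Phi$ across the breaking. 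The ``interpolation cost'' you defer requires no further computation: by Siefring's definition $\delta_\infty(u_\infty^\pm) = \tfrac{1}{2}\left[ i_\infty^\Phi(u_\infty^\pm) - \Omega_\mp^\Phi(\gamma)\right]$ with $\Omega_\mp^\Phi(\gamma) = \pm(m-1)\a_\pm^\Phi(\gamma) + \left[\bar{\sigma}_\pm(\gamma)-1\right]$, so summing the contributions of the two ends and using $p(\gamma) = \a_+^\Phi(\gamma) - \a_-^\Phi(\gamma)$ gives the stated formula at once, and the evenness you flag as a worry is automatic because $\delta_\infty$ is a nonnegative integer.
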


The usefulness of this theorem lies in the fact that every bracketed
term on the right hand side of the formula is known
a priori to be nonnegative, so if we also know that the annuli $u_k$ are
embedded, then all these terms must vanish.  In that case, we will easily
be able to deduce the following consequence:

\begin{cor}
\label{cor:localAdjunction}
In the setting of Theorem~\ref{thm:localAdjunction}, if $u_k$ is embedded
for every~$k$, then one of the following is true:
\begin{itemize}
\item $\gamma$ is a simply covered orbit;
\item $\gamma$ is a double cover of a simply covered orbit $\gamma'$
such that $p(\gamma')=1$ and $p(\gamma)=0$, and both of the
half-cylinders $u_\infty^\pm$ have no hidden double points at infinity.
\end{itemize}
\end{cor}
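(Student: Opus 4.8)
The plan is to deduce Corollary~\ref{cor:localAdjunction} directly from Theorem~\ref{thm:localAdjunction} by exploiting the nonnegativity of every bracketed term on the right-hand side of the local adjunction formula. First I would observe that since each $u_k$ is embedded, we have $\delta(u_k) = 0$ for all $k$, so the left-hand side of the formula vanishes. As the five nonnegative contributions $2\delta_\infty(u_\infty^+)$, $2\delta_\infty(u_\infty^-)$, $\bar\sigma_+(\gamma) - 1$, $\bar\sigma_-(\gamma) - 1$, and $[m(\gamma)-1]\,p(\gamma)$ sum to zero, each must individually vanish. In particular $\delta_\infty(u_\infty^+) = \delta_\infty(u_\infty^-) = 0$, so the half-cylinders have no hidden double points at infinity; also $\bar\sigma_\pm(\gamma) = 1$, so the relevant asymptotic eigenfunctions of $\gamma$ are simply covered; and finally $[m(\gamma) - 1]\,p(\gamma) = 0$, i.e.\ either $m(\gamma) = 1$ or $p(\gamma) = 0$.

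The remaining work is to translate the conclusion ``$m(\gamma) = 1$ or $p(\gamma) = 0$'' together with the vanishing of $\bar\sigma_\pm$ and $\delta_\infty$ into the dichotomy in the statement. If $m(\gamma) = 1$ then $\gamma$ is simply covered and we are in the first case, so assume $m(\gamma) > 1$; then we must have $p(\gamma) = 0$. Write $\gamma$ as an $m(\gamma)$-fold cover of a simply covered orbit $\gamma'$. I would argue that $\gamma$ must be a \emph{double} cover of $\gamma'$ with $p(\gamma') = 1$: the point is that the vanishing of the spectral covering numbers $\bar\sigma_\pm(\gamma)$ forces strong constraints on how the asymptotic eigenfunctions governing the ends of $u_\infty^\pm$ wind relative to the underlying orbit $\gamma'$. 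Concretely, an embedded finite-energy half-cylinder asymptotic to an $m$-fold cover whose extremal asymptotic eigenfunction is simply covered (i.e.\ $\bar\sigma = 1$) can only exist when the transverse behaviour of $\gamma'$ is compatible with such winding, and a Conley–Zehnder parity count at $\gamma'$ versus $\gamma$ pins down $m(\gamma) = 2$ and $p(\gamma') = 1$. I expect this is exactly the behaviour recorded in Siefring's framework \cite{Siefring:intersection} relating $\bar\sigma_\pm(\gamma)$, $m(\gamma)$, the winding numbers $\alpha_\pm$ of the extremal eigenfunctions, and the parity: when $m(\gamma) > 1$ and $\bar\sigma_\pm(\gamma) = 1$, one is forced into the ``bad orbit''-type scenario where $\gamma'$ is odd and $\gamma$ is its even double cover.

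The main obstacle I anticipate is this last step—establishing that $\bar\sigma_\pm(\gamma) = 1$ together with $m(\gamma) > 1$ and $p(\gamma) = 0$ actually forces $m(\gamma) = 2$ and $p(\gamma') = 1$, rather than, say, a higher even cover. This requires unwinding the definitions of the spectral covering numbers in terms of the covering multiplicities of the asymptotic eigenfunctions at the extremal ends of the spectrum of the asymptotic operator, and using the relation between the Conley–Zehnder index of a cover and the winding intervals of the underlying orbit. The parity constraint $p(\gamma)=0$ eliminates odd covers, and I would expect a short computation with the winding numbers—showing that for $m \ge 3$ the extremal eigenfunction of the $m$-fold cover is necessarily multiply covered, contradicting $\bar\sigma_\pm(\gamma)=1$—to close the argument. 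Everything else is a bookkeeping exercise once Theorem~\ref{thm:localAdjunction} is in hand.
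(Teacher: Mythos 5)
Your first step is exactly the paper's: embeddedness gives $\delta(u_k)=0$, and since every term on the right-hand side of Theorem~\ref{thm:localAdjunction} is nonnegative, you correctly conclude $\delta_\infty(u_\infty^\pm)=0$, $\bar\sigma_\pm(\gamma)=1$, and that $\gamma$ is either simply covered or has even parity. The gap is in the remaining step, which is the real content of the corollary and which you explicitly leave as an expectation: deducing from $m(\gamma)>1$, $p(\gamma)=0$ and $\bar\sigma_\pm(\gamma)=1$ that $m(\gamma)=2$ and $p(\gamma')=1$. Moreover, the heuristic you offer is partly wrong: the parity constraint $p(\gamma)=0$ does \emph{not} eliminate odd covers, since by Lemma~\ref{czcover} every cover of an even hyperbolic orbit again has even parity; that case must instead be excluded by the spectral covering condition (for even hyperbolic $\gamma'$ the extremal eigenspaces of $A_{(\gamma')^m}$ are spanned by the $m$-fold covers of the extremal eigenfunctions of $A_{\gamma'}$, so $\bar\sigma_\pm = m$). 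Likewise, your claim that a ``short computation with winding numbers'' shows the extremal eigenfunction of an $m$-fold cover with $m\ge 3$ is multiply covered is the right statement to prove, but it is not supplied, and divisibility of the winding alone does not show that an eigenfunction is a cover.

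For comparison, here is how the paper closes this step. Since $p(\gamma)=0$, the extremal windings $\alpha_+^\Phi(\gamma)$ and $\alpha_-^\Phi(\gamma)$ coincide, so by Proposition~\ref{spectral} the corresponding eigenspaces $E_\pm$ are $1$-dimensional. The deck transformations of the $m$-fold cover act linearly on $E_\pm$, giving a real $1$-dimensional representation of $\ZZ_m$, which can only act by $\pm 1$; the representation must be faithful because $\bar\sigma_\pm(\gamma)=1$ means $E_\pm$ contains simply covered eigenfunctions, and faithfulness forces $m\le 2$. Finally, if the underlying simple orbit $\gamma'$ had even parity, it would possess two eigenfunctions with the same winding and eigenvalues of opposite sign, and $E_\pm$ would then consist of their double covers, contradicting $\bar\sigma_\pm(\gamma)=1$; hence $p(\gamma')=1$. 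If you prefer to avoid the representation-theoretic language, a case analysis via Lemma~\ref{czcover} and Proposition~\ref{spectral} also works (elliptic $\gamma'$, and odd covers of odd hyperbolic $\gamma'$, are excluded by parity; even hyperbolic $\gamma'$, and even covers $m\ge 4$ of odd hyperbolic $\gamma'$, are excluded because their extremal eigenspaces are spanned by nontrivial covers), but some such argument must actually be carried out for the proof to be complete.
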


\subsection{Outline of proofs, conclusion}

In the situation at hand, our degenerating curves are all embedded, so
Corollary~\ref{cor:localAdjunction} applies
and we conclude that the breaking orbit is always
either simply covered or a double cover of a negative hyperbolic
orbit, what is known in the SFT literature (cf.~\cite{SFT}) as a 
\defin{bad orbit}.
In the first case we are done, and in the second, we will show that
degenerations of this form can always be glued back together so that they
are interpreted as \emph{interior points} of the compactified moduli
space, and the moduli space must therefore have additional degenerations
besides this.  In other words, breaking along bad orbits can happen, but
it cannot be the only type of breaking that happens, so there is still
guaranteed to be some breaking along a simple orbit somewhere, producing
a nicely embedded curve asymptotic to an unknotted orbit.  The resulting
constraints on the Conley-Zehnder index and self-linking number of the
orbit then follow by a straightforward and essentially standard
topological computation.

The major differences between the above summary and the proof of
Theorem~\ref{thm:nonprime} are as follows.  For the first statement in the
theorem, the symplectic cobordism $W$ is taken to be symplectically trivial, 
i.e.~its completion has the
form $(\RR \times M,d\lambda)$, where $\lambda$ is a Liouville form
matching $e^r \alpha_\pm$ near $\{\pm\infty\} \times M$, and
$\alpha_\pm$ are two nondegenerate contact forms for $(M,\xi)$, of which
$\alpha_-$ is given but $\alpha_+$ is carefully chosen.
The assumptions of the theorem then allow us to choose $\alpha_+$
and a compatible almost complex structure $J_+$ near $+\infty$
so that we find a smooth $1$-dimensional moduli space of seed curves.
Since this moduli space is only $1$- and not $2$-dimensional, it does not
form a foliation, but the curves are still nicely embedded and the same
principles therefore apply: a variation on the same argument described
above leads to a nicely embedded plane asymptotic to a simple
Reeb orbit for~$\alpha_-$.

Here is an outline of the remainder of the paper.
In \S\ref{sec:preparation}, we clarify the essential definitions and
review the necessary facts about punctured holomorphic curves and their
intersection theory in dimension four.  The purpose of
\S\ref{sec:seed} is then to specify the data at the positive ends of our
symplectic cobordisms, construct the seed curves and prove that they are
Fredholm regular and nicely embedded.
Theorem~\ref{thm:localAdjunction} and Corollary~\ref{cor:localAdjunction}
on local adjunction for breaking holomorphic annuli are proved
in \S\ref{sec:localAdjunction}.  Finally, \S\ref{sec:compactness}
carries out the main compactness arguments, and \S\ref{sec:proofs} 
completes the proofs of the main theorems.

\subsection{Acknowledgements}
The second author would like to thank Paolo Ghiggini for enlightening
discussions, and Emmy Murphy for explaining
Example~\ref{ex:cap} and pointing out the connection between
exact Lagrangian caps and Liouville cobordisms.
The present paper constitutes a portion of the first author's PhD thesis
within the framework of the project 
\textsl{Intersections in Low-Dimensional Symplectic Field Theory}, 
funded by the Leverhulme Trust.  The work of the second author was also 
partially funded by a Royal Society University Research Fellowship
and by EPSRC grant EP/K011588/1.

\section{Preparation}
\label{sec:preparation}

The purpose of this section is to fix definitions and review some known
results that will be needed in the rest of the paper.

\subsection{Contact manifolds and symplectic cobordisms}
\label{sec:definitions}

We begin by reviewing some basic definitions from contact geometry and the
precise way in which contact manifolds arise
as hypersurfaces or boundary components of symplectic manifolds.

Suppose $(W,\omega)$ is a $2n$-dimensional
symplectic manifold, and $M \subset W$ is a smooth oriented hypersurface.  We 
say that $M$ is \defin{convex} if there exists a Liouville vector
field near~$M$ that is positively transverse to~$M$: here a vector field $V$
is called \defin{Liouville} if its flow dilates the symplectic form,
meaning $\Lie_V \omega = \omega$.  This is equivalent
to the condition that the dual $1$-form $\lambda := \omega(V,\cdot)$
satisfies $d\lambda = \omega$, and being positively transverse to $M$ then
means that the restriction $\alpha := \lambda|_{TM}$ satisfies
$$
\alpha \wedge (d\alpha)^{n-1} > 0.
$$
This makes $\alpha$ a (positive) \defin{contact form} on~$M$, and the induced
(positive and co-oriented) contact structure is the co-oriented hyperplane 
field $\xi := \ker \alpha \subset TM$.  It follows from Gray's stability
theorem that if $V$ is replaced with any
other Liouville vector field positively transverse to~$M$, then the induced
contact structure is isotopic to~$\xi$, hence the contact form can be
regarded as an auxiliary choice, but the contact structure is canonical
up to isotopy.  

\begin{remark}
In this paper, every contact structure is assumed to be
\emph{co-oriented} and \emph{positive} (with respect to a given 
orientation of the manifold),
and contact forms are always assumed compatible with the given co-orientation.
\end{remark}

\begin{exmp}
\label{ex:stdSphere}
We denote by
$\xi\std \subset T S^{2n-1}$ the \defin{standard contact structure} on the
sphere, which arises as the convex boundary of the standard symplectic
unit ball with a Liouville vector field pointing radially outward.
In coordinates $(x_1,y_1,\ldots,x_n,y_n) \in \RR^{2n}$, the 
\defin{standard contact form} $\alpha\std$ is the restriction to 
$S^{2n-1} \subset \RR^{2n}$ of the Liouville form $\frac{1}{2} 
\sum_{j=1}^n (x_j\, dy_j - y_j\, dx_j)$.
\end{exmp}

Any choice of contact form $\alpha$ determines a
\defin{Reeb vector field} $R_\alpha$ on $M$ via the conditions
$$
d\alpha(R_\alpha,\cdot) \equiv 0, \qquad \alpha(R_\alpha) \equiv 1.
$$
If $M$ is a convex hypersurface in a symplectic manifold $(W,\omega)$, then
the orbits of $R_\alpha$ are precisely the orbits on $M$ of any Hamiltonian
vector field defined by a Hamiltonian function on $(W,\omega)$ with $M$ as a 
regular level set; moreover, convexity implies that a neighbourhood of $M$ is
foliated by other convex hypersurfaces that have the same Reeb orbits
up to parametrization.
See \cite{Geiges:book} for more on contact structures, and
\cite{HoferZehnder} for more on the convexity condition in Hamiltonian
dynamics.

Given two closed contact manifolds
$(M_-,\xi_-)$ and $(M_+,\xi_+)$, a \defin{strong symplectic cobordism}
from $(M_-,\xi_-)$ to $(M_+,\xi_+)$ is a compact symplectic manifold
$(W,\omega)$ whose boundary can be identified with $-M_- \sqcup M_+$
such that $M_-$ and $M_+$ are both convex hypersurfaces and the
contact structures they inherit are isotopic to $\xi_-$ and $\xi_+$ 
respectively.
Note that the orientation reversal for $M_-$ means that the Liouville
vector field points \emph{inward} at~$M_-$ (for this reason we sometimes
call $M_-$ the \defin{concave} boundary component), whereas it points
outward at~$M_+$.  Additionally, $(W,\omega)$ is called a \defin{Liouville}
(or \defin{exact symplectic}) cobordism from $(M_-,\xi_-)$ to $(M_+,\xi_+)$
if the transverse Liouville vector field defined near $\p W$ can be assumed
to extend to a global Liouville vector field.  This is equivalent to 
requiring $\omega = d\lambda$ for some $1$-form $\lambda$ that restricts to
the boundary as contact forms $\alpha_\pm := \lambda|_{TM_\pm}$ 
for~$\xi_\pm$.

The \defin{symplectization} of a contact manifold $(M,\xi = \ker\alpha)$ is the
open symplectic manifold $(\RR \times M,d(e^r \alpha))$, where $r$ denotes
the coordinate on~$\RR$.  Its symplectic
structure is independent of the choice of $\alpha$ up to isotopy, but
$\alpha$ determines a special class of compatible almost complex structures
$\jJ(\alpha)$
on $(\RR \times M,d(e^r\alpha))$ such that $J \in \jJ(\alpha)$ if and only if:
\begin{itemize}
\item $J$ is $\RR$-invariant (i.e.~invariant under the flow of~$\p_r$);
\item $J \p_r = R_\alpha$;
\item $J(\xi) = \xi$;
\item $d\alpha(\cdot,J\cdot)|_\xi$ is a bundle metric on~$\xi$.
\end{itemize}
Given a symplectic cobordism $(W,\omega)$ from $(M_-,\xi_-)$ to $(M_+,\xi_+)$
with induced contact forms $\alpha_\pm$ at~$M_\pm$, the corresponding
Liouville vector fields defined near $M_+$ and $M_-$ determine collar 
neighbourhoods $(-\epsilon,0] \times M_+$ and $[0,\epsilon) \times M_-$
respectively in which $\omega = d(e^r \alpha_\pm)$.  One then defines the
\defin{symplectic completion}
$$
\overline{W} = \left( (-\infty,0] \times M_- \right) \cup_{M_-} W
\cup_{M_+} \left( [0,\infty) \times M_+ \right)
$$
by extending $\omega$ over the cylindrical ends as $d(e^r \alpha_\pm)$.
We shall denote by
$$
\jJ(W,\omega,\alpha_+,\alpha_-)
$$
the (nonempty and contractible) space of almost complex structures
on $\overline{W}$ that are $\omega$-compatible on~$W$ and restrict to the
cylindrical ends as elements of~$\jJ(\alpha_\pm)$.  Almost complex structures 
of this type will be referred to simply as \defin{admissible} whenever the
corresponding symplectic and contact data is fixed.

\subsection{Reeb orbits and the Conley-Zehnder index}
\label{sec:Reeb}

Given a contact form $\alpha$ on a contact manifold
$(M,\xi)$ of dimension $2n-1$, a closed Reeb orbit can be regarded as a 
smooth map
$$
\gamma : S^1 := \RR / \ZZ \to M
$$
satisfying $\dot{\gamma} = T R_\alpha(\gamma)$ for some $T > 0$, which is
the orbit's \defin{period}.  Indeed, setting $x(t) := \gamma(t/T)$, such a 
map is equivalent to a path $x : \RR \to M$ that satisfies
$\dot{x} = R_\alpha(x)$ and $x(t+T) = x(t)$ for all~$t$.  The number $T$
need not generally be the \defin{minimal period}, hence $\gamma$ may
be a multiple cover $\gamma(t) = \gamma_0(kt)$ of another closed Reeb
orbit $\gamma_0$ for some integer $k \ge 2$; when this is not the case,
we say $\gamma$ is \defin{simple}, and the map $\gamma : S^1 \to M$ is
then an embedding.  When $\gamma$ is simple and $\dim M = 3$, it makes
sense to ask whether $\gamma$ is \defin{unknotted}, meaning it is
the boundary of an embedded disk, or more explicitly there exists 
an embedding
$$
u: \mb{D}^2 \hookrightarrow M
$$
whose restriction to the boundary coincides with the Reeb orbit:
$$
u|_{\p \mb{D}^2}=\c .
$$

To every closed Reeb orbit one can associate an integer-valued invariant, 
the \emph{Conley-Zehnder index},
which depends on a trivialization of the contact structure along the orbit.
We will recall the definition of this invariant by way of a 
theorem regarding \emph{asymptotic operators}.

Fix $J \in \jJ(\alpha)$ and
suppose $\c : S^1 \to M$ is a closed orbit of $R_\alpha$ with period~$T$.
Given any symmetric connection $\nabla$ on~$M$, define 
$A_\c:\mc{C}^\i(\c^*\xi)\rightarrow \mc{C}^\i(\c^*\xi)$ by
\begin{equation}
A_\c \eta = -J(\nabla_t \eta-T\nabla_\eta R_\alpha).
\end{equation}
This operator is well defined and independent of the choice of connection~$\nabla$
(see e.g.~\cite{Wendl:SFT}*{\S 3.3}), and it is
symmetric with respect to the inner product on $\mc{C}^\i(\c^*\xi)$ defined by
$$
\langle \eta, \zeta \rangle = \int_{S^1} \o_{\c(t)}\big(\eta(t), J(\c(t))\zeta(t)\big) \, dt.
$$
It also extends to an unbounded self-adjoint operator on
$L^2(\c^*\xi)$ with domain $W^{1,2}(\c^*\xi)$, referred to
as the \defin{asymptotic operator} associated to~$\c$.
Its spectral properties have been described in \cite{HWZ:props2}.
\begin{prop}[\cite{HWZ:props2}]
\label{spectral}
With the notation above, let $\sigma(A_\c) \subset \RR$ denote the 
spectrum of $A_\c$, and for any
$\lambda \in \sigma(A_\c)$, denote the corresponding eigenspace by~$E_\lambda$.
Then:
\begin{enumerate}
\item $0 \in \sigma(A_\c)$ if and only if $\c$ is degenerate;
\item $\sigma(A_\c)$ is a discrete subset;
\item For each $\lambda \in \sigma(A_\c)$, $1 \le \dim E_\lambda \le 2(n-1)$;
\item All nontrivial eigenfunctions of $A_\c$ are everywhere nonzero.
\end{enumerate}
If $\dim M = 3$, then the last statement implies that one can define
winding numbers $\wind^\Phi(\eta) \in \ZZ$ of nontrivial eigenfunctions
$\eta$ relative to 
any fixed unitary trivialization $\Phi$ of $\c^*\xi$.
The following statements then also hold:
\begin{enumerate}
\setcounter{enumi}{4}
\item If $\eta, \zeta \in E_\lambda$ are two nontrivial elements of the same
eigenspace, then $\wind^\Phi(\eta) = \wind^\Phi(\zeta)$,
hence we can sensibly denote both by~$\wind^\Phi(\lambda)$.
\item The map $\sigma(A_\c) \to \ZZ : \lambda \mapsto \wind^\Phi(\lambda)$ 
is $2$-to-$1$ (counting
multiplicity of eigenvalues) and increasing.  Hence if two distinct 
eigenvalues have the same winding, they are consecutive and their eigenspaces 
are $1$-dimensional.
\end{enumerate}
\end{prop}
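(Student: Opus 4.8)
The plan is to reduce the entire statement to the theory of a linear first-order ODE on the circle, which is essentially the route behind \cite{HWZ:props2}. First I would fix a unitary trivialization $\Phi$ of $\c^*\xi$ --- one compatible with $\o|_\xi$, $J$, and the bundle metric $d\alpha(\cdot,J\cdot)|_\xi$ --- so that the displayed inner product becomes the standard $L^2$-product on $\mc{C}^\i(S^1,\RR^{2(n-1)})$ and $J$ becomes a fixed complex structure $J_0$. In this trivialization $A_\c$ has the form $\eta\mapsto -J_0(\dot\eta - S(t)\eta)$ for a smooth loop of matrices $S(t)$, and symmetry of $A_\c$ forces $S(t) = S(t)^{\transpose}$ and $J_0 S(t) + S(t) J_0 = 0$; taking the trace in $J_0 S J_0^{-1} = -S$ then gives $\tr S(t)\equiv 0$. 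The eigenvalue equation $A_\c\eta = \lambda\eta$ becomes the linear system $\dot\eta = (S(t)+\lambda J_0)\eta$, so every eigenfunction is a $1$-periodic solution of a linear ODE on $\RR^{2(n-1)}$; uniqueness for linear ODEs then gives statement~(4) at once (a nonzero solution is nowhere zero) and the bound $\dim E_\lambda\le 2(n-1)$ of~(3), while $\dim E_\lambda\ge 1$ is merely the definition of an eigenvalue. Letting $\Psi_\lambda(t)$ be the fundamental solution --- it lies in $\SL(2(n-1),\RR)$ because $\tr S\equiv 0$ --- we have $\lambda\in\sigma(A_\c)$ iff $\det(\Psi_\lambda(1)-\1)=0$; as $\lambda\mapsto\Psi_\lambda(1)$ is real-analytic and this determinant is not identically zero, its zeros are discrete, which is~(2). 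For~(1), note that at $\lambda=0$ the system $\dot\eta = S(t)\eta$ is exactly the linearization of the Reeb flow along $\c$ restricted to $\xi$ --- which is how $A_\c$ was built out of $\nabla_t - T\nabla R_\alpha$ --- so $\Psi_0(1)$ is the linearized return map on $\xi$, and $0\in\sigma(A_\c)$ iff that map has $1$ as an eigenvalue, i.e.\ iff $\c$ is degenerate.

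For the three-dimensional refinements, $n=2$, so $\c^*\xi$ has rank $2$ and, by~(4), a nonzero eigenfunction is a loop in $\RR^2\setminus\{0\}$ with a winding number $\wind^\Phi(\eta)\in\ZZ$. Identify $\RR^2\cong\CC$, $J_0 = i$. If $\eta,\zeta\in E_\lambda$ are nonzero and linearly dependent at one time $t_0$, uniqueness for $\dot\eta = (S+\lambda J_0)\eta$ makes them proportional for all $t$; so if they are not proportional, $\{\eta(t),\zeta(t)\}$ is a frame for every $t$, and the continuous lift of $t\mapsto\arg\zeta(t)-\arg\eta(t)$ never meets $\pi\ZZ$, hence stays inside one interval of length $\pi$. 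Since both loops are $1$-periodic, that lift changes by $2\pi(\wind^\Phi(\zeta)-\wind^\Phi(\eta))$ over a period, forcing equality --- this is~(5), and we may write $\wind^\Phi(\lambda)$. For the monotonicity in~(6): a nonzero solution of $\dot\eta = (S+\lambda J_0)\eta$ written as $\eta = re^{i\phi}$ has argument obeying a \emph{scalar} ODE $\dot\phi = g(t,\phi)+\lambda$ with $g$ smooth, $1$-periodic in $t$, $\pi$-periodic in $\phi$, and independent of $\lambda$. For $\lambda_1<\lambda_2$ a solution at $\lambda_2$ is a strict super-solution of the $\lambda_1$-equation; after shifting one angle by a multiple of $2\pi$ (a symmetry, as $g$ is $\pi$-periodic in $\phi$) so that the initial angles lie within $2\pi$ of one another, the scalar comparison principle gives $\phi_2(t)\ge\phi_1(t)$ for $t\ge0$, and evaluating at $t=1$ --- where each angle has advanced by $2\pi\wind^\Phi(\lambda_j)$ --- yields $\wind^\Phi(\lambda_1)\le\wind^\Phi(\lambda_2)$.

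What remains, and where I expect the real work to be, is the ``$2$-to-$1$, counting multiplicity'' half of~(6). The idea is to package the angle functions through all initial directions into a single rotation number $\rho(\lambda)\in\RR$ of the path $\lambda\mapsto\Psi_\lambda(1)\in\SL(2,\RR)$ (well-defined by the previous step up to the usual normalization), and to show from the comparison argument above that $\rho$ is continuous, non-decreasing, and proper with $\rho(\lambda)\to\pm\infty$. One then checks that $\lambda\in\sigma(A_\c)$ precisely when $\tr\Psi_\lambda(1)=2$, in which case $\rho(\lambda)=\wind^\Phi(\lambda)\in\ZZ$; and, by tracking how $\tr\Psi_\lambda(1)$ crosses $\pm2$, that for each integer $k$ in the image of $\wind^\Phi$ the level set $\rho^{-1}(k)$ is a closed interval $[a_k,b_k]$ on whose interior $\Psi_\lambda(1)$ is hyperbolic (so has no eigenvalue $1$, by discreteness of $\sigma(A_\c)$). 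Hence the eigenvalues with winding $k$ are exactly the endpoints: either $a_k<b_k$, giving two distinct eigenvalues with $1$-dimensional eigenspaces, or $a_k=b_k$, giving a single eigenvalue with $\Psi_{a_k}(1)=\1$ and $\dim E_{a_k}=2$; in both cases the total multiplicity is $2$. The last sentence of the proposition follows immediately. The genuine difficulty here is the rotation-number bookkeeping --- making $\rho$ into a bona fide continuous, monotone, proper function and matching its integer level sets with eigenvalues and their multiplicities --- which is exactly the winding comparison lemma of Hofer--Wysocki--Zehnder and is classical Sturm--Liouville oscillation theory in spirit; since the proposition is quoted from \cite{HWZ:props2}, it need not in fact be reproved here.
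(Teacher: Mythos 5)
The paper offers no proof of this proposition---it is quoted verbatim from \cite{HWZ:props2}---and your sketch follows exactly the route of that source: trivialize so that $A_\gamma$ becomes $-J_0\partial_t$ plus a symmetric zeroth-order term, treat eigenfunctions as $1$-periodic solutions of a linear ODE to obtain (1)--(4), and use winding numbers of nowhere-vanishing solutions together with a Sturm-type comparison/rotation-number argument for (5)--(6). Your plan is essentially correct and matches the cited argument, with only minor quibbles: formal self-adjointness forces only the full zeroth-order coefficient $J_0S(t)$ to be symmetric (not the two separate conditions you assert, though $\operatorname{tr}S\equiv 0$ and $\Psi_\lambda(1)\in\SL(2(n-1),\RR)$ still follow), identifying $\sigma(A_\gamma)$ with the set of $\lambda$ admitting nontrivial periodic solutions requires the standard compact-resolvent remark, and the ``$2$-to-$1$ counting multiplicity'' bookkeeping you defer is precisely the content of the winding lemma in \cite{HWZ:props2}, which the paper is entitled to quote without reproving.
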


It follows that one can speak of the largest 
negative eigenvalue and the smallest positive eigenvalue associated to 
the asymptotic operator, and when $\dim M = 3$, their winding numbers relative
to a chosen trivialization $\Phi$ are denoted by
$$
\a_-^\Phi(\c) , \quad \a_+^\Phi(\c) \in \ZZ
$$
respectively.  Proposition~\ref{spectral} implies that these two numbers differ
by either $0$ or $1$ if $\c$ is nondegenerate, and in this case,
the \defin{Conley-Zehnder index} relative to the 
trivialization $\Phi$ of $\c^*\xi$ can be characterized 
(according to a theorem in \cite{HWZ:props2}) via the relation
\begin{equation}
\label{eqn:CZwinding}
\cz{\c}{\Phi} =\a_-^\Phi(\c) + \a_+^\Phi(\c) \in \ZZ,
\end{equation}
and its \defin{parity} (which does not depend on~$\Phi$) by 
\begin{equation}
\label{eqn:parity}
p(\c)=\a_+^\Phi(\c) -\a_-^\Phi(\c) \in \{0,1\}.
\end{equation}
As these formulas indicate, $\cz{\c}{\Phi}$ depends only on the asymptotic
operator and can thus sensibly be written as 
$$
\cz{A_\c}{\Phi} = \cz{\c}{\Phi}.
$$
With this in mind, \eqref{eqn:CZwinding} can also be used to compute
Conley-Zehnder indices in higher dimensions, via the relation
\begin{equation}
\label{eqn:CZhigherDim}
\cz{A_1 \oplus \ldots \oplus A_m}{\Phi_1 \oplus \ldots \oplus \Phi_m} =
\cz{A_1}{\Phi_1} + \ldots + \cz{A_m}{\Phi_m},
\end{equation}
which holds for any collection of asymptotic operators $A_j$ with trivial
kernels on Hermitian line bundles trivialized by $\Phi_j$ for $j=1,\ldots,m$.
We will use this to compute the indices of higher-dimensional
seed curves in \S\ref{sec:stdSphere}.

While $\cz{\c}{\Phi}$ depends generally on the choice of trivialization~$\Phi$,
in certain situations one can make natural choices to remove this ambiguity.
If $\gamma$ is nullhomologous and forms the boundary of an immersed 
surface $\dD$ in~$M$, we define
$$
\muCZ(\gamma ; \dD) \in \ZZ
$$
as $\cz{\c}{\Phi}$ with $\Phi$ required to admit an extension to a unitary
trivialization of $\xi$ along~$\dD$.  The index in this case still depends on 
the choice of surface~$\dD$, but this ambiguity also disappears if $c_1(\xi) = 0$,
which is true e.g.~on $(S^3,\xi\std)$.

We require the following standard lemma on the behaviour of the index for
multiply covered orbits in dimension three.  Let
$$
\gamma^k : S^1 \to M : t \mapsto \gamma(kt)
$$
denote the $k$-fold cover of the orbit $\gamma : S^1 \to M$ for $k \in \NN$,
and note that any trivialization $\Phi$ of $\gamma^*\xi$ induces a
trivialization $\Phi^k$ of $(\gamma^k)^*\xi$.
\begin{lem} \label{czcover}
Suppose $\dim M = 3$, and that $\c$ and all its multiple covers are 
nondegenerate.  Then for any unitary trivialization $\Phi$ of $\c^*\xi$,
\begin{equation}
\cz{\c^k}{\Phi^k} =
\begin{cases}
k\.\cz{\c}{\Phi} & \text{ if $\c$ is hyperbolic}\\
2 \lfloor{k \theta}\rfloor+1 & \text{ if $\c$ is elliptic}
\end{cases}
\end{equation}
for every $k \in \NN$, where in the elliptic case,
$\theta \in \RR$ is an irrational number determined by $\gamma$ and~$\Phi$.
\end{lem}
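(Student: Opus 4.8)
The plan is to reduce the statement to the standard iteration behaviour of the Conley--Zehnder index of paths in $\Spp(2,\RR)$. Fix $J\in\jJ(\alpha)$ and a symmetric connection, linearise the Reeb flow along $\gamma$, and read the result off in the trivialisation $\Phi$ of $\gamma^*\xi$: this produces a path $\Psi\colon[0,1]\to\Spp(2,\RR)$ with $\Psi(0)=\Id$, and the spectral description of $\cz{A_\gamma}{\Phi}$ via $\alpha_\pm^\Phi$ recorded in \eqref{eqn:CZwinding}--\eqref{eqn:parity} is well known to coincide with $\mu_{CZ}(\Psi)$ (this is implicit in \cite{HWZ:props2}). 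Since $\Phi^k$ is by construction the pullback of $\Phi$ along the $k$-fold covering $t\mapsto kt$ of $S^1$, and the linearised return map of $\gamma^k$ is $\Psi(1)^k$, the path attached to $(\gamma^k,\Phi^k)$ agrees up to reparametrisation with the $k$-fold iterate $\Psi^{(k)}$ obtained by concatenating $\Psi$ with its left-translates $\Psi(1)^j\Psi$; nondegeneracy of $\gamma^k$ says $\Psi(1)^k$ has no eigenvalue $1$, so $\mu_{CZ}(\Psi^{(k)})$ is defined and depends only on the homotopy class of $\Psi^{(k)}$ rel endpoints. The lemma thus amounts to evaluating $\mu_{CZ}(\Psi^{(k)})$.

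If $\gamma$ is hyperbolic then $\Psi(1)$, and hence every $\Psi(1)^k$, has real eigenvalues different from $\pm1$, and in this situation the Conley--Zehnder index is additive under iteration: $\mu_{CZ}(\Psi^{(k)})=k\,\mu_{CZ}(\Psi)$. I would invoke this standard fact (see e.g.\ \cite{SFT}, or Long's iteration theory); alternatively it follows from \eqref{eqn:CZwinding}--\eqref{eqn:parity} by tracking the winding of the stable/unstable eigendirection, which simply multiplies by $k$ in the positive hyperbolic case and multiplies by $k$ up to a half-turn per period in the negative hyperbolic case, the two endpoint contributions $\alpha_\pm^{\Phi^k}(\gamma^k)$ summing in either case to $k\bigl(2\alpha_-^\Phi(\gamma)+p(\gamma)\bigr)=k\,\cz{\gamma}{\Phi}$.

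If $\gamma$ is elliptic then $\Psi(1)$ is conjugate in $\Spp(2,\RR)$ to a rotation, and I would first conjugate (which changes neither $\Psi(0)=\Id$ nor $\mu_{CZ}$) and then homotope $\Psi$ rel endpoints, through paths whose endpoint remains conjugate to a rotation, to a pure rotation path $t\mapsto R\bigl(2\pi\varrho(t)\bigr)$ with $\varrho(0)=0$ and $\varrho(1)=\theta$. The key point is that along such homotopies the rotation of the endpoint is constrained enough that $\theta\in\RR$ is an honest real invariant of the pair $(\gamma,\Phi)$, not merely its class mod~$\ZZ$. Nondegeneracy of every iterate $\gamma^k$ translates into $0\notin\sigma(A_{\gamma^k})$, i.e.\ $k\theta\notin\ZZ$ for all $k\in\NN$, so $\theta$ is irrational. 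The iterate $\Psi^{(k)}$ is then homotopic rel endpoints to $t\mapsto R\bigl(2\pi k\varrho(t)\bigr)$, a rotation path ending at rotation by $2\pi k\theta$, and the elementary formula for the Conley--Zehnder index of a rotation path gives $\mu_{CZ}(\Psi^{(k)})=2\lfloor k\theta\rfloor+1$, with no ambiguity in the floor since $k\theta\notin\ZZ$.

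The genuinely routine steps are the identification of the two descriptions of $\cz{A_\gamma}{\Phi}$ and the index computation for rotation paths. The step that needs the most care is the elliptic case: one must make precise that the rotation number $\theta$ of the linearised flow in the framing $\Phi$ is a well-defined real number, so that passing to the $k$-fold cover multiplies it by $k$ rather than only its residue; a secondary point is the negative hyperbolic sub-case of the additivity argument, where the half-turns per period must be bookkept but contribute $\lfloor k/2\rfloor+\lceil k/2\rceil=k$ and hence vanish into $2\alpha_-^\Phi(\gamma)+p(\gamma)$.
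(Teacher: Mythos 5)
The paper does not actually prove this lemma---it is quoted as a standard fact---so there is no internal argument to measure you against; judged on its own, your proposal is a correct rendition of the standard proof and has no genuine gap. Two remarks on the route you chose. First, since the paper \emph{defines} $\cz{\c}{\Phi}$ through the extremal winding numbers $\a_\pm^\Phi$ of the asymptotic operator (Proposition~\ref{spectral} and \eqref{eqn:CZwinding}--\eqref{eqn:parity}), the most ``native'' proof would stay in that picture and track how $\a_\pm^{\Phi^k}(\c^k)$ behave under covers (the winding of the relevant eigendirection, respectively the rotation number, simply multiplies by~$k$); you instead translate to the $\Spp(2,\RR)$-path definition and invoke the iteration formulas there. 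That is equally valid, provided you treat the equivalence of the two definitions as the known input from \cite{HWZ:props2} that it is, and it has the advantage that the hyperbolic additivity and the rotation-path computation are genuinely off-the-shelf. Second, two small points of hygiene: the $j$-th segment of the iterated path is the right-translate $t\mapsto\Psi(t)\Psi(1)^j$ rather than $\Psi(1)^j\Psi(t)$; this is harmless, since the two segments share endpoints and conjugation by $\Psi(1)^j$ preserves both endpoints and the index, so the concatenations are homotopic rel endpoints, but it is worth stating correctly. Likewise, in the elliptic case the clean way to phrase your ``honest real invariant'' claim is that after conjugating so that $\Psi(1)\in\SO(2)$, the homotopy class of $\Psi$ rel endpoints determines a lift $\theta\in\RR$ of the rotation angle (equivalently, a lift of the path to the universal cover of $\Spp(2,\RR)$), and iteration multiplies this lift by~$k$; with nondegeneracy of all covers forcing $k\theta\notin\ZZ$, the formula $2\lfloor k\theta\rfloor+1$ follows exactly as you say.
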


We will occasionally also need to deal with Reeb orbits $\c$ that are degenerate but
belong to Morse-Bott families, in which case the following definition will be
convenient.  If $\c$ is degenerate, then $0 \in \sigma(A_\c)$ but one can
find $\epsilon > 0$ such that $(-\epsilon,0) \cap \sigma(A_\c) = \emptyset$.
It follows that for any $\epsilon > 0$ sufficiently small, $A_\c + \epsilon$
is the asymptotic operator of a perturbed nondegenerate orbit, whose index
we will denote by
\begin{equation}
\label{eqn:perturbedCZ}
\cz{\c+\epsilon}{\Phi} := \cz{A_\c + \epsilon}{\Phi}.
\end{equation}
This is independent of the choice as long as $\epsilon > 0$ is sufficiently
small, and this \defin{perturbed Conley-Zehnder index} gives a sharp lower 
bound on the indices of possible nondegenerate perturbations of~$\c$.
The winding numbers $\a_\pm^\Phi(\c + \epsilon) \in \ZZ$ are defined
similarly after replacing $A_\c$ by $A_\c + \epsilon$, and they are then
related to $\cz{\c+\epsilon}{\Phi}$ by the obvious analogue of
\eqref{eqn:CZwinding}.  Notice that $\a_-^\Phi(\c + \epsilon) = \a_-^\Phi(\c)$,
but $\a_+^\Phi(\c + \epsilon)$ and $\a_+^\Phi(\c)$ may differ if $\c$
is degenerate.

Finally, here is a definition that will be needed for intersection
theory when $\dim M = 3$.  Observe that for any Reeb orbit $\gamma_0$
and integers $k \ge 2$, every 
eigenfunction in the $\lambda$-eigenspace of $A_{\gamma_0}$ has 
a $k$-fold cover 
that belongs to the $k\lambda$-eigenspace of $A_{\gamma_0^k}$.  In the
three-dimensional case, one can use Proposition~\ref{spectral} to show that
the covering multiplicity of an eigenfunction depends only on its winding
number, thus all elements of the same eigenspace have the same covering
multiplicity.  The (positive and negative) \defin{spectral covering numbers}
$$
\bar{\sigma}_\pm(\gamma) \in \NN
$$
are defined as the covering multiplicity of the eigenspace that has 
winding~$\a_\pm^\Phi(\gamma)$.  Note that this is only 
interesting when $\gamma = \gamma_0^k$ for some other orbit $\gamma_0$
and $k \ge 2$; if $\gamma$ is
simple then $\bar{\sigma}_\pm(\gamma) = 1$ always.

\subsection{Holomorphic curves in completed symplectic cobordisms}
\label{sec:curves}

In this subsection, 
fix a $2n$-dimensional symplectic cobordism $(W,\omega)$ with completion 
$\overline{W}$ and
admissible almost complex structure $J \in \jJ(W,\omega,\alpha_+,\alpha_-)$,
with the restrictions of $J$ to the cylindrical ends denoted by
$J_\pm \in \jJ(\alpha_\pm)$.

\subsubsection{Asymptotics}

We will consider asymptotically cylindrical 
pseudoholomorphic curves $u : (\dot{\Sigma},j) \to (\overline{W},J)$,
where 
$$
\dot{\Sigma} = \Sigma \setminus \Gamma
$$
is the result of removing finitely many punctures $\Gamma \subset \Sigma$
from a closed Riemann surface~$(\Sigma,j)$.  The set of punctures is partitioned
into sets of \defin{positive} and \defin{negative} punctures 
$\Gamma^+$ and $\Gamma^-$ respectively, where $z \in \Gamma^\pm$ means that
one can find a biholomorphic
identification of a punctured neighbourhood of $z$ with 
$[0,\infty) \times S^1$ or $(-\infty,0] \times S^1$ respectively such that
for $|s|$ sufficiently large, $u$ in these coordinates takes the form
$$
u(s,t) = \exp_{(T s,\c(t))} h(s,t) \in [0,\infty) \times M_+ \text{ or }
(-\infty,0] \times M_-
$$
for some closed Reeb orbit $\c : S^1 \to M_\pm$ with period~$T > 0$, where
the exponential map is defined with respect to any choice of 
translation-invariant metric on the cylindrical ends, and
$h(s,t)$ is a vector field along the trivial cylinder which satisfies
$|h(s,t)| \to 0$ as $s \to \pm\infty$.  We say in this case that $u$ is 
(positively or negatively) \defin{asymptotic to~$\c$} at~$z$, and $h(s,t)$
is called the \defin{asymptotic representative} of $u$ at~$z$.  The asymptotic
behaviour of $h(s,t)$ is described by a formula proved in 
\cites{HWZ:props1,HWZ:props4,Mora,Siefring:asymptotics}: namely if
the orbit $\gamma$ is nondegenerate or Morse-Bott, then for
$|s|$ sufficiently large, $h$ is either identically zero or satisfies
\begin{equation}
\label{eqn:asympFormula}
h(s,t) = e^{\lambda s}(e_1(t)+r(s,t)),
\end{equation}
where $r(s,t) \rightarrow 0$ uniformly in all derivatives as 
$s \rightarrow \pm \infty$, $\lambda \in \sigma(A_\c)$ is an eigenvalue of
the asymptotic operator of $\c$ with $\pm\lambda < 0$, and $e_1 \in 
\mc{C}^\i(\c^*\xi_\pm)$ is a nontrivial element of the corresponding eigenspace.

\subsubsection{Moduli spaces and compactness}

It is a standard fact that every asymptotically cylindrical $J$-holomorphic
curve $u : (\dot{\Sigma},j) \to (\overline{W},J)$ either is somewhere injective
or is a multiple cover of a somewhere injective asymptotically cylindrical 
curve, and moreover, the
set of \emph{injective points} of a somewhere injective curve is open and
dense.  A complete proof of this statement may be found in \cite{Nelson:Abendblatt},
using asymptotic results of Siefring \cite{Siefring:asymptotics}.
Recall that $z \in \dot{\Sigma}$ is called an \defin{injective point} of $u$
if $u^{-1}(u(z)) = \{z\}$ and $du(z) \ne 0$, and we call $u$ a
$k$-fold \defin{multiple cover} of another curve $v : (\dot{\Sigma}' =
\Sigma' \setminus \Gamma',j') \to (\overline{W},J)$ if
$$
u = v \circ \phi
$$
for some holomorphic map $\phi : (\Sigma,j) \to (\Sigma',j')$ of degree~$k$.

Fix finite ordered tuples of Reeb orbits $\boldsymbol{\gamma}^+ = (\gamma_1^+,\ldots,
\gamma_{k_+}^+)$ and $\boldsymbol{\gamma}^- = (\gamma_1^-,\ldots,
\gamma_{k_-}^-)$ in $M_+$ and $M_-$ respectively (the case $k_\pm = 0$ is allowed),
assuming that all of them are either nondegenerate or belong to
Morse-Bott families.  For an integer $m \ge 0$, the moduli space
$$
\mM_m(J,\boldsymbol{\gamma}^+,\boldsymbol{\gamma}^-)
$$
of \defin{unparametrized $J$-holomorphic spheres
asymptotic to $\boldsymbol{\gamma}^+$ and $\boldsymbol{\gamma}^-$ with
$m$ marked points}
is defined as the set of equivalence classes of tuples
$(\Sigma,j,\Gamma^+,\Gamma^-,u,(\zeta_1,\ldots,\zeta_m))$ where 
$(\Sigma,j)$ is a closed Riemann surface
of genus zero, $\Gamma^+, \Gamma^- \subset \Sigma$ are disjoint finite sets, each
equipped with an ordering, the \defin{marked points} 
$\zeta_1,\ldots,\zeta_m \in \dot{\Sigma} :=
\Sigma \setminus (\Gamma^+ \cup \Gamma^-)$ are all distinct, and
$$
u : (\dot{\Sigma},j) \to (\overline{W},J)
$$
is an asymptotically cylindrical $J$-holomorphic curve
with positive punctures
$\Gamma^+$ and negative punctures $\Gamma^-$, such that $u$ is asymptotic at
the $i$th puncture in $\Gamma^\pm$ to $\gamma_i^\pm$ for $i=1,\ldots,k_\pm$.
Two such tuples are considered equivalent if one can be written as a
reparametrization of the other via a biholomorphic diffeomorphism of their
domains that maps marked points to marked points and punctures to punctures, 
with signs and orderings preserved.
The topology of $\mM_m(J,\boldsymbol{\gamma}^+,\boldsymbol{\gamma}^-)$ 
can be characterized by saying that a sequence converges if it has 
representatives with a fixed domain $\Sigma$ and fixed sets of punctures 
and marked points such that the conformal structures converge in 
$\mc{C}^\infty(\Sigma)$ while the maps to $\overline{W}$ converge in 
$\mc{C}^\infty_{\text{loc}}(\dot{\Sigma})$ and also in $\mc{C}^0$ up to 
infinity (with respect to translation-invariant metrics on the cylindrical
ends).
We shall often abuse notation by referring to the entire equivalence class
of tuples $[(\Sigma,j,\Gamma^+,\Gamma^-,u,(\zeta_1,\ldots,\zeta_m))]$ forming an element of
$\mM_m(J,\boldsymbol{\gamma}^+,\boldsymbol{\gamma}^-)$ simply as~$u$.
In this paper we will only consider the cases $m=0,1$, abbreviating the former
by
$$
\mM(J,\boldsymbol{\gamma}^+,\boldsymbol{\gamma}^-) :=
\mM_0(J,\boldsymbol{\gamma}^+,\boldsymbol{\gamma}^-).
$$
For $m>0$, the \defin{evaluation map}
\begin{equation*}
\begin{split}
\ev : \mM_m(J,\boldsymbol{\gamma}^+,\boldsymbol{\gamma}^-) &\to \overline{W}^m \\
[(\Sigma,j,\Gamma^+,\Gamma^-,u,(\zeta_1,\ldots,\zeta_m))] &\mapsto
(u(\zeta_1),\ldots,u(\zeta_m))
\end{split}
\end{equation*}
is well defined and continuous by construction.

Recall that neighbourhoods in 
$\mM_m(J,\boldsymbol{\gamma}^+,\boldsymbol{\gamma}^-)$ can be described
as zero-sets of smooth Fredholm sections in suitable Banach space bundles
(see e.g.~\cite{Wendl:automatic}).  A curve $u$ is called \defin{Fredholm
regular} whenever it forms a transverse intersection of such a Fredholm
section with the zero-section.  The \defin{virtual dimension} of
$\mM_m(J,\boldsymbol{\gamma}^+,\boldsymbol{\gamma}^-)$ at $u$ is given by
the Fredholm index of the linearized section at~$u$ minus the dimension of
the group of automorphisms of the domain, and in the case $m=0$ is also called 
the \defin{index} of~$u$.  If the orbits are all nondegenerate, it is given by
the formula
\begin{equation}
\label{eqn:index}
\ind{u} = (n-3) \chi(\dot{\Sigma}) + 2 c_1^\Phi(u^*T\overline{W}) +
\sum_{i=1}^{k_+} \cz{\gamma_i^+}{\Phi} - \sum_{i=1}^{k_-} \cz{\gamma_i^-}{\Phi}.
\end{equation}
Here $\Phi$ is an arbitrary choice of unitary trivializations of $\xi_\pm$ 
along each of the asymptotic orbits, which naturally induce asymptotic 
trivializations of the complex vector bundle
$u^*T\overline{W} \to \dot{\Sigma}$, and $c_1^\Phi(u^*T\overline{W}) \in \ZZ$ 
then denotes the \defin{relative first Chern number} of $u^*T\overline{W}$ 
with respect to these asymptotic
trivializations.  This term ensures that the total expression is independent of the
choice~$\Phi$.  We will also need a special case of the index formula under
Morse-Bott assumptions: if all positive asymptotic orbits are Morse-Bott (but
possibly degenerate) and all negative orbits are nondegenerate, then
\begin{equation}
\label{eqn:indexMB}
\ind{u} = (n-3) \chi(\dot{\Sigma}) + 2 c_1^\Phi(u^*T\overline{W}) +
\sum_{i=1}^{k_+} \cz{\gamma_i^+ + \epsilon}{\Phi} -
\sum_{i=1}^{k_-} \cz{\gamma_i^-},
\end{equation}
where $\epsilon > 0$ is assumed sufficiently small (see \eqref{eqn:perturbedCZ}).
Note that this is the virtual dimension of the moduli space of curves near $u$
with \emph{fixed} asymptotic orbits, i.e.~the orbits are not allowed to
move continuously in their respective Morse-Bott families.  The index without
this constraint would be larger; see \cite{Wendl:automatic} for an explanation
of \eqref{eqn:indexMB} and the constrained/unconstrained distinction.
Adding a marked point generally increases the virtual dimension by~$2$, so
$\mM_m(J,\boldsymbol{\gamma}^+,\boldsymbol{\gamma}^-)$ has virtual dimension
$\ind{u} + 2m$ on any component that includes the curve 
$u \in \mM(J,\boldsymbol{\gamma}^+,\boldsymbol{\gamma}^-)$.

A standard application of the implicit function theorem
implies that the open subset consisting of Fredholm regular curves in 
$\mM_m(J,\boldsymbol{\gamma}^+,\boldsymbol{\gamma}^-)$ admits the structure of
a smooth finite-dimensional orbifold whose dimension locally equals its
virtual dimension, and it is a manifold near any curve that is somewhere injective.
Moreover, a standard argument via the Sard-Smale theorem
(see \cite{McDuffSalamon:Jhol} or \cite{Wendl:lectures}) shows that
after perturbing $J$ generically in $\jJ(W,\omega,\alpha_+,\alpha_-)$
on some open subset $\uU \subset W$ with compact closure, one can assume
that all somewhere injective curves passing through $\uU$ are Fredholm regular.
Similarly, Dragnev \cite{Dragnev} (see also \cite{Wendl:blogForgetfulGood})
has shown that on a symplectization $(\RR \times M,d(e^r \alpha))$,
generic perturbations within $\jJ(\alpha_\pm)$ suffice to
make all somewhere injective curves regular, and this result can also be
applied to any curves in the cobordism $\overline{W}$ that are contained in
a cylindrical end.

If the Reeb flows on $M_+$ and $M_-$ are both globally nondegenerate or
Morse-Bott, then $\mM_m(J,\boldsymbol{\gamma}^+,\boldsymbol{\gamma}^-)$ has
a natural compactification
$$
\overline{\mM}_m(J,\boldsymbol{\gamma}^+,\boldsymbol{\gamma}^-)
$$
defined in \cite{SFTcompactness}, consisting
of \defin{stable holomorphic buildings} of arithmetic genus zero with
$m$ marked points.  An example of a holomorphic
building (with higher arithmetic genus) is shown
in Figure~\ref{fig:compactness}.
We shall write holomorphic buildings using the notation
$$
(v_{N_+}^+ | \ldots | v_1^+ | v_0 | v_1^- | \ldots | v_{N_-}^-),
$$
where $N_+ , N_- \ge 0$ are integers, $v_1^\pm,\ldots,v_{N_\pm}^\pm$ are each
(possibly disconnected and/or nodal) $J_\pm$-holomorphic curves in the symplectizations
$\RR \times M_\pm$, forming the \defin{upper} and \defin{lower levels}
respectively, and $v_0$ is a (possibly disconnected and/or nodal) $J$-holomorphic
curve in $\overline{W}$, the \defin{main level}.  Note that by convention,
the main level is allowed to be empty (i.e.~$v_0$ is a curve with domain the
empty set) if $N_+$ or $N_-$ is nonzero.  Each upper or level
is defined only up to $\RR$-translation, and the same is true of all levels
when $\overline{W}$ is a symplectization, in which case there is no 
distinguished ``main'' level or distinction between ``upper'' and ``lower''
levels.  The evaluation map extends continuously over
$\overline{\mM}_m(J,\boldsymbol{\gamma}^+,\boldsymbol{\gamma}^-)$ if we
also compactify $\overline{W}$ by adding $\{\pm\infty\} \times M_\pm$ to the top
and bottom of the cylindrical ends, i.e.~marked points in upper or lower
levels are mapped to $\{\infty\} \times M_+$ or $\{-\infty\} \times M_-$
respectively.

Our notation for buildings is convenient but suppresses
an additional detail that will sometimes be quite important: the data
also includes a one-to-one corresondence between the positive punctures of
each level (other than the topmost) and the negative punctures of the level
above it, such that corresponding punctures
have matching asymptotic orbits, the so-called \defin{breaking orbits}.  
Additionally, each pair of corresponding punctures is equipped
with a choice of a rotation angle for gluing the corresponding
positive and negative ends along the breaking orbit---this choice is
unique if the orbit is simple, but in general there are $m \in \NN$
distinct choices if the orbit has covering multiplicity~$m$.  All of this
data together is called a \defin{decoration} of the building.  Different
choices of decoration often produce buildings that are 
biholomorphically inequivalent 
to each other and thus represent distinct elements of
$\overline{\mM}_m(J,\boldsymbol{\gamma}^+,\boldsymbol{\gamma}^-)$.

\begin{figure}
\includegraphics[width=5in]{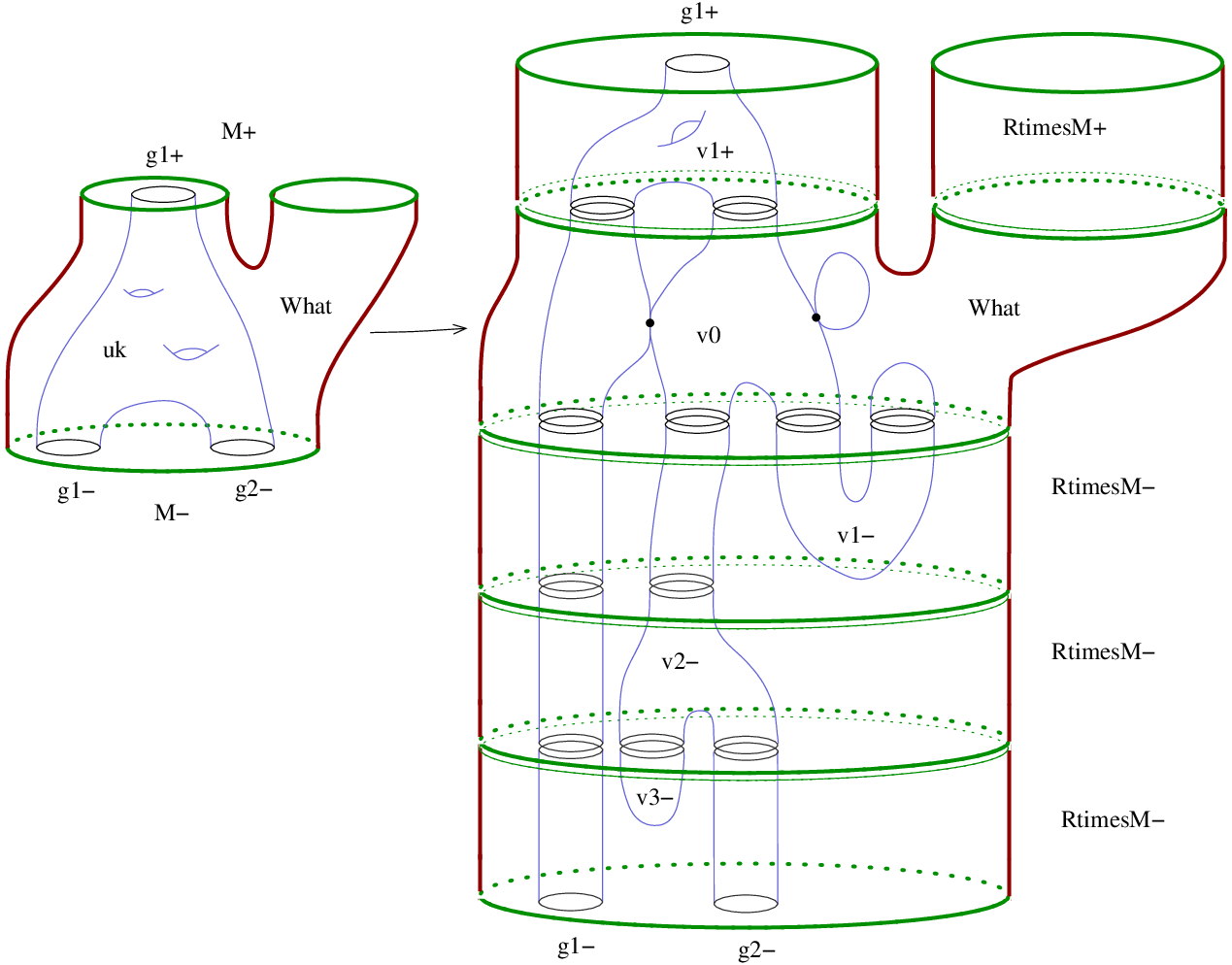}
\caption{\label{fig:compactness} The picture shows the degeneration of a 
sequence of punctured curves of genus 2 into a building with a main
level, one upper level and three lower levels.
We label the building as $(v_1^+|v_0|v_1^-|v_2^-|v_3^-)$, where each
$v_i^\pm$ is in general a disconnected nodal curve in a single level.
The arithmetic genus of the 
building is still 2, and the levels match along their respective 
asymptotic orbits.}
\end{figure}

Whenever $(W,\omega)$ is a Liouville cobordism (and in particular if
$\overline{W}$ is a symplectization), Stokes'
theorem prevents the existence of curves with no positive ends, sometimes
referred to as \defin{holomorphic caps}.
The following standard result is then immediate from the definition of
convergence in \cite{SFTcompactness}.

\begin{prop}
\label{prop:tree}
Suppose $\overline{W}$ is either a symplectization or the completion of a
Liouville cobordism, and
$u_k \in \mM_m(J,\gamma,\emptyset)$ is a sequence of $J$-holomorphic
planes converging to a holomorphic building.  Then the limiting building
has the following properties:
\begin{itemize}
\item Each connected component of each level is a punctured sphere with
precisely one positive puncture.
\item The lowest level has no negative punctures (so it is a disjoint union
of planes).
\item The top level is connected.
\item There are no nodes.
\end{itemize}
\qed
\end{prop}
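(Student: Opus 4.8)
The plan is to combine the SFT compactness theorem of \cite{SFTcompactness} with the nonexistence of holomorphic caps recalled just above the statement, and then run an elementary count on the combinatorial type of the limit building. By \cite{SFTcompactness}, after passing to a subsequence $u_k$ converges to a stable building $u_\infty \in \overline{\mM}_m(J,\gamma,\emptyset)$; since every $u_k$ is a connected curve of genus~$0$, the building $u_\infty$ is connected and has arithmetic genus~$0$. Moreover every level of $u_\infty$ is a (possibly disconnected and/or nodal) holomorphic curve living either in the exact cobordism $\overline{W}$ or in a symplectization $\RR \times M_\pm$, and in either case Stokes' theorem---applied to $\o = d\lambda$ in the first case and to $d\alpha_\pm$ together with the tameness of $J_\pm$ on~$\xi_\pm$ in the second---shows that no nonconstant connected component can fail to have a positive puncture; that is, no component is a ``cap.''

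First I would encode $u_\infty$ as a finite graph $G$ whose vertices are the connected components of its levels and whose edges are the breaking orbits (each joining a positive puncture of one component to a negative puncture of a component in the adjacent level) together with the nodes. Connectedness of $u_\infty$ makes $G$ connected, so if $V$, $B$, $D$ denote the numbers of components, breaking orbits and nodes respectively, then $B+D \ge V-1$; on the other hand the arithmetic genus of a connected building equals $\sum_v g_v + (B+D) - V + 1$, so its vanishing forces $B+D = V-1$ and $g_v = 0$ for every component, whence all components are punctured spheres. Since each $u_k$ is a plane, $u_\infty$ has exactly one exterior positive puncture (asymptotic to $\gamma$) and no exterior negative punctures, so the total number of positive punctures over all components is $1+B$ and the total number of negative ones is $B$.

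The decisive step is then the count. Because each component has at least one positive puncture, $1 + B = \sum_v \#\{\text{positive punctures of }v\} \ge V = B + D + 1$, which forces $D = 0$: there are no nodes. Equality throughout then gives that each component has exactly one positive puncture and that $B = V-1$; and with $D=0$ a component in the lowest level has no negative puncture of any kind---no node, no breaking orbit below it since there is no lower level, and no exterior negative puncture---so the lowest level is a disjoint union of planes. Finally the unique exterior positive puncture lies on some top-level component, and since every top-level component needs a positive puncture while the only positive punctures available there are exterior and there is just one of them, the top level consists of a single component, i.e.\ it is connected. The one point requiring care is the a priori possibility of constant (``ghost'') components, which carry no punctures and would invalidate the phrase ``at least one positive puncture''; this is the step I expect to be the only real (if minor) obstacle. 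It is handled by observing that a ghost component has genus~$0$ and only nodal---and, if $m>0$, marked-point---special points, so stability forces it to carry several nodes; feeding this back into the count, where a ghost contributes $0$ to $\sum_v \#\{\text{positive punctures of }v\}$ but several incidences to~$D$, contradicts $B+D=V-1$. Hence there are no ghosts and the clean count above applies verbatim.
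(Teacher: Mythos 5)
Your argument is correct and is essentially the proof the paper has in mind: the paper states the proposition without details, as ``immediate'' from the definition of convergence in \cite{SFTcompactness} together with the preceding Stokes-theorem observation that exactness excludes components without positive punctures, and your graph/arithmetic-genus count is exactly the standard way of making that precise. The only loose point is the ghost discussion: feeding ``each ghost carries at least two nodes'' into the count gives $D\le V_g$ from the puncture inequality and $D\ge V_g$ from the degree bound, hence equality rather than an outright contradiction, and one needs one further (routine) line---equality forces every node to join two ghosts of degree exactly two, which with at most one marked point and the genus-zero (no self-node) condition is impossible---to finish excluding ghosts.
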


We shall refer to the components without negative ends in the above lemma as
\defin{capping planes}; they are not to be confused with ``holomorphic
caps,'' which have \emph{only} negative ends.

The converse of compactness is gluing, as discussed e.g.~in
\cite{Nelson:thesis}*{Chapter~7}.  We will only need the following special case.

\begin{prop}
\label{prop:gluing}
Assume $\gamma_\infty$ is a Morse-Bott Reeb orbit in~$M_+$, $\gamma$ is a
nondegenerate orbit in~$M_-$, $m \ge 0$ is an integer, and
$(v_0 | v_1^-) \in \overline{\mM}_m(J,\gamma_\infty,\emptyset)$ is a
(decorated) stable $J$-holomorphic building such that
$v_0 \in \mM_m(J,\gamma_\infty,\gamma)$ and
$v_1^- \in \mM(J_-,\gamma,\emptyset) / \RR$ are both somewhere injective and
Fredholm regular.  Then there exist neighborhoods
\begin{equation*}
\begin{split}
v_0 &\in \uU_0 \subset \mM_m(J,\gamma_\infty,\gamma) \\
v_1^- &\in \uU_- \subset \mM(J_-,\gamma,\emptyset) / \RR
\end{split}
\end{equation*}
and a smooth embedding
$$
\Psi : [0,\infty) \times \uU_0 \times \uU_- \hookrightarrow \mM_m(J,\gamma_\infty,\emptyset)
$$
such that for any sequences $[0,\infty) \ni r_k \to +\infty$,
$u_k \to u_\infty \in \uU_0$ and $u_k^- \to u_\infty^- \in \uU_-$,
$$
\Psi(r_k,u_k,u_k^-) \to (u_\infty | u_\infty^-) \in \overline{\mM}_m(J,\gamma_\infty,\emptyset)
$$
in the SFT topology.  Moreover, every smooth curve in
$\mM_m(J,\gamma_\infty,\emptyset)$ sufficiently close to $(v_0 | v_1^-)$
in the SFT topology is in the image of~$\Psi$.
\qed
\end{prop}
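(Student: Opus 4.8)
This is the standard SFT gluing theorem in a particularly simple guise, and I would deduce it from the general construction worked out in \cite{Nelson:thesis}*{Chapter~7} (see also \cite{Wendl:lectures}), the only task being to check that the present hypotheses fit that framework. Two simplifications are available: the unique breaking orbit $\gamma$ is \emph{nondegenerate}, so no Morse-Bott gluing is needed at the neck (the Morse-Bott orbit $\gamma_\infty$ sits at the fixed positive end and is never glued), and the building $(v_0 | v_1^-)$ has only two levels and no nodes. First I would fix an exponential weight $\delta > 0$ smaller than the distance from $0$ to $\sigma(A_\gamma)$ and work in weighted Sobolev completions adapted to exponential decay with rate $\delta$ at the punctures over~$\gamma$; on these spaces the linearized operators $\mbf{D}_{v_0}$ and $\mbf{D}_{v_1^-}$ are Fredholm with the expected indices, and Fredholm regularity of $v_0$ and $v_1^-$ (together with $\ker A_\gamma = 0$) makes them surjective. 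Then for each large gluing length $r$ I would form the preglued curve $v_0 \#_r v_1^-$ by translating $v_1^-$ downward by $\approx r$ in the $\RR$-direction and splicing its positive end over $\gamma$ to the negative end of $v_0$ using cutoff functions supported in a neck of length $\sim r$; the decoration of the building prescribes which of the $m(\gamma)$ rotational gluings is used, and the resulting map fails to be $J$-holomorphic only by an error of size $O(e^{-\delta r})$ in the relevant norm.

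The analytic core is then to build a right inverse $\mbf{Q}_r$ of the linearized operator at $v_0 \#_r v_1^-$ whose norm is bounded uniformly in~$r$: this is done in the usual way by patching the right inverses of $\mbf{D}_{v_0}$ and $\mbf{D}_{v_1^-}$ across the neck, and it is here that nondegeneracy of $\gamma$ is essential, as it guarantees that the matching condition along the breaking orbit contributes no residual cokernel, so the glued operator is itself surjective with uniformly bounded right inverse once $r$ is large. Feeding this together with the standard quadratic estimate for $\dbar_J$ into a quantitative Newton iteration produces, for each large $r$ and each $(u,u^-)$ in suitable neighborhoods $\uU_0 \ni v_0$, $\uU_- \ni v_1^-$, a genuine $J$-holomorphic curve within a ball of fixed radius around the preglued curve, unique in that ball and depending smoothly on all parameters; after normalizing away the domain automorphisms this defines the map $\Psi : [0,\infty) \times \uU_0 \times \uU_- \hookrightarrow \mM_m(J,\gamma_\infty,\emptyset)$, and it is an embedding by the uniqueness clause. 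The bookkeeping $1 + \dim \uU_0 + \dim \uU_- = \ind{v_0} + \ind{v_1^-} + 2m$ matches the virtual dimension of $\mM_m(J,\gamma_\infty,\emptyset)$ at the glued curve, since gluing along a nondegenerate orbit contributes no index correction, confirming that $\Psi$ parametrizes an open subset of a smooth part of the moduli space.

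Finally I would check the two topological statements. Convergence $\Psi(r_k,u_k,u_k^-) \to (u_\infty | u_\infty^-)$ in the SFT topology is immediate from the construction, since the glued curve is $\mc{C}^0$-close (away from the neck, and up to the punctures) to the preglued curve, which by construction degenerates to the two-level building as $r \to \infty$. For surjectivity of $\Psi$ onto a neighborhood of $(v_0 | v_1^-)$: given any sequence $w_j \in \mM_m(J,\gamma_\infty,\emptyset)$ converging to the building in the SFT topology, the definition of that convergence supplies, after reparametrization, gluing lengths $r_j \to \infty$ such that $w_j$ lies within distance tending to $0$ of some preglued curve built from base curves in $\uU_0$ and $\uU_-$; once $r_j$ is large enough, $w_j$ then falls inside the fixed-radius uniqueness ball of the Newton iteration and must therefore coincide with $\Psi(r_j, u, u^-)$ for the corresponding parameters. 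I expect the genuine work to lie in these last two points, namely in the uniform estimate for $\mbf{Q}_r$ with its careful choice of weight relative to $\sigma(A_\gamma)$ and control of the cutoff errors on a long neck, and in arguing that SFT convergence really does drive a sequence into the uniqueness region; the pregluing, the quadratic estimates, and the smooth dependence of $\Psi$ on its arguments are entirely routine and are treated in full in the references cited above.
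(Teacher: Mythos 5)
Your proposal is correct and takes essentially the same approach as the paper: the paper states this proposition without proof, simply invoking the standard SFT gluing analysis (citing Nelson's thesis, Chapter~7), and your outline---pregluing along the nondegenerate breaking orbit $\gamma$ with a weight below $\operatorname{dist}(0,\sigma(A_\gamma))$, a uniformly bounded right inverse, Newton iteration with a uniqueness ball, the observation that the Morse-Bott orbit $\gamma_\infty$ is never glued, and the role of the decoration in selecting one of the $m(\gamma)$ rotational gluings---is precisely the standard special case the paper is relying on.
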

\begin{remark}
The notation for buildings used in Proposition~\ref{prop:gluing} implicitly
assumes that if multiple buildings can be constructed out of
$u_\infty$ and $u_\infty^-$ via different choices of decoration, then
$(u_\infty | u_\infty^-)$ is the unique choice that is close to
$(v_0 | v_1^-)$ in the SFT topology.
\end{remark}

\subsection{The low-dimensional case}

We now specialize to the case where the cobordism $(W,\omega)$ is 
$4$-dimensional, so all contact manifolds under consideration will be
$3$-dimensional.

\subsubsection{Indices of covers}

We begin with a pair of convenient numerical observations.  The first is
borrowed (along with its proof) from \cite{Hutchings:index}.

\begin{prop}
\label{prop:trivCyls}
Suppose $J \in \jJ(\alpha)$ for a contact $3$-manifold $(M,\xi=\ker\alpha)$,
and $u : (\dot{\Sigma},j) \to (\RR \times M,J)$ is a $J$-holomorphic
branched cover of a trivial cylinder over a Reeb orbit whose covers are all
nondegenerate.  Then $\ind{u} \ge 0$, and equality can hold only when
the cover is unbranched or the orbit is elliptic.
\end{prop}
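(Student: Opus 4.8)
The plan is to compute $\ind{u}$ head-on from the index formula \eqref{eqn:index} together with the description of Conley--Zehnder indices of covers in Lemma~\ref{czcover}, and then show the resulting expression is manifestly nonnegative. Since indices and Euler characteristics are additive over connected components, I may assume $u$ is connected. Write $\gamma$ for the underlying simple Reeb orbit of the trivial cylinder, $d \ge 1$ for the degree of $u$ over $\RR\times\gamma$, and let $q_1^+,\dots,q_{k_+}^+$ and $q_1^-,\dots,q_{k_-}^-$ be the covering multiplicities of the asymptotic orbits $\gamma^{q_i^+}$ at the positive punctures and $\gamma^{q_j^-}$ at the negative punctures, so that $\sum_i q_i^+ = \sum_j q_j^- = d$ and $k_\pm \ge 1$.

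First I would pin down the topology of the cover. Applying Riemann--Hurwitz to the induced degree-$d$ branched cover $\Sigma \to S^2$, with ramification over the two punctures contributing $d-k_+$ and $d-k_-$ and interior branching order $Z \ge 0$, one gets $\chi(\dot\Sigma) = -Z$; equivalently $Z = 2g - 2 + k_+ + k_-$ for $g$ the genus, and $Z = 0$ precisely when $u$ is unbranched. Next I would take $\Phi$ to be the trivialization of $\gamma^*\xi$ (with induced trivializations $\Phi^{q_i^\pm}$ along the covers): along the trivial cylinder $T(\RR\times M)$ splits as $\underline\CC \oplus \xi$, so $u^*T(\RR\times M)$ is a pullback of a trivial bundle and $c_1^\Phi(u^*T(\RR\times M)) = 0$ with respect to this $\Phi$. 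Setting $n=2$ in \eqref{eqn:index} then gives
$$
\ind{u} = Z + \sum_{i=1}^{k_+}\cz{\gamma^{q_i^+}}{\Phi^{q_i^+}} - \sum_{j=1}^{k_-}\cz{\gamma^{q_j^-}}{\Phi^{q_j^-}}.
$$

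Now I would split into the two cases of Lemma~\ref{czcover}. If $\gamma$ is hyperbolic, then $\cz{\gamma^q}{\Phi^q} = q\,\cz{\gamma}{\Phi}$, so both sums equal $d\,\cz{\gamma}{\Phi}$ and cancel, leaving $\ind{u} = Z \ge 0$ with equality iff $u$ is unbranched. If $\gamma$ is elliptic with rotation number $\theta$, then $\cz{\gamma^q}{\Phi^q} = 2\lfloor q\theta\rfloor + 1$, and substituting $Z = 2g - 2 + k_+ + k_-$ yields
$$
\ind{u} = 2\Bigl(g - 1 + k_+ + \sum_{i}\lfloor q_i^+\theta\rfloor - \sum_{j}\lfloor q_j^-\theta\rfloor\Bigr).
$$
The elementary floor estimates $\sum_i \lfloor q_i^+\theta\rfloor \ge \lfloor d\theta\rfloor - (k_+-1)$ and $\sum_j \lfloor q_j^-\theta\rfloor \le \lfloor d\theta\rfloor$ then bound the bracket below by $g \ge 0$, so $\ind{u} \ge 0$; here equality can occur even with branching, consistent with the statement.

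The argument is essentially bookkeeping, so the only real point requiring care is getting the inputs exactly right: confirming $\chi(\dot\Sigma) = -Z$ via Riemann--Hurwitz, checking that $c_1^\Phi$ vanishes for the \emph{particular} trivialization adapted to the trivial cylinder, and organizing the elliptic case so that it collapses to the two floor inequalities. With those in place, both the inequality $\ind{u} \ge 0$ and the description of the equality case follow at once.
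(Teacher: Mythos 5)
Your argument is correct, and the inputs are exactly the right ones: the index formula \eqref{eqn:index} with $n=2$, the vanishing of $c_1^\Phi$ in the trivialization pulled back along the trivial cylinder, Riemann--Hurwitz in the form $\chi(\dot\Sigma)=-Z$ with $Z$ the interior branching, and Lemma~\ref{czcover}. In the hyperbolic case your computation ($\ind{u}=Z$, equality iff unbranched) coincides with the paper's. In the elliptic case your route differs in organization: the paper first replaces $u$ by a holomorphic building of thrice-punctured spheres covering the same trivial cylinder (using that this does not change the index) and then verifies nonnegativity pair-of-pants by pair-of-pants via the inequalities $\lfloor a+b\rfloor\le\lfloor a\rfloor+\lfloor b\rfloor+1$ and $\lfloor a\rfloor+\lfloor b\rfloor\le\lfloor a+b\rfloor$, whereas you do the global bookkeeping in one shot, writing $\ind{u}=2\bigl(g-1+k_++\sum_i\lfloor q_i^+\theta\rfloor-\sum_j\lfloor q_j^-\theta\rfloor\bigr)$ and applying the same two floor estimates to all punctures at once. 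Both proofs rest on identical arithmetic facts; the paper's reduction spares the Riemann--Hurwitz/genus bookkeeping at the cost of invoking the pants decomposition of the cover, while your direct computation avoids that step and even yields the slightly sharper bound $\ind{u}\ge 2g$ in the elliptic case. Your side remarks (additivity over components, $k_\pm\ge 1$, and the observation that elliptic equality can occur with branching) are harmless and do not affect the conclusion, which only requires that $\ind{u}=0$ forces unbranchedness when the orbit is hyperbolic.
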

\begin{proof}
If the underlying orbit $\gamma$ is hyperbolic, then the index formula gives
$\ind{u} = -\chi(\dot{\Sigma}) \ge 0$ due to Lemma~\ref{czcover}, which is
an equality if and only if $\dot{\Sigma}$ is the cylinder, in which case
the Riemann-Hurwitz formula implies that the cover is unbranched.
If the orbit is instead elliptic, we can make our lives slightly easier with
the observation that $u$ has the same index as that of some
holomorphic building whose connected components are all thrice-punctured
spheres that are also branched covers of the same trivial cylinder.  It 
therefore suffices to prove that the inequality holds for thrice-punctured
spheres.  If for instance $u$ has two positive punctures at $\gamma^k$
and $\gamma^\ell$ and a negative puncture at $\gamma^{k+\ell}$, then
Lemma~\ref{czcover} gives
$$
\ind{u} = -\chi(\dot{\Sigma}) + \left(2\lfloor k\theta \rfloor + 1 \right)
+ \left(2 \lfloor \ell \theta \rfloor + 1 \right) -
\left(2 \lfloor (k+\ell)\theta \rfloor + 1 \right),
$$
where $\chi(\dot{\Sigma}) = -1$, and the index is thus
nonnegative due to the relation $\lfloor a + b \rfloor \le
\lfloor a \rfloor + \lfloor b \rfloor + 1$.  In the inverse case with
one positive puncture and two negative, we get the same result using
$\lfloor a \rfloor + \lfloor b \rfloor \le \lfloor a + b \rfloor$.
\end{proof}

\begin{prop}
\label{prop:coversHyp}
Suppose $\dim W = 4$ and $u = v \circ \phi : (\dot{\Sigma},j) \to (\overline{W},J)$
is a $k$-fold cover of a somewhere injective $J$-holomorphic curve
$v : (\dot{\Sigma}',j') \to (\overline{W},J)$ whose asymptotic orbits are
all nondegenerate and hyperbolic.  Then
$$
\ind{u} \ge k \ind{v},
$$
with equality if and only if the cover $\phi : (\dot{\Sigma},j) \to 
(\dot{\Sigma}',j')$ has no branch points in the punctured surface~$\dot{\Sigma}$.
\end{prop}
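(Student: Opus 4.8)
The plan is to compute $\ind{u}$ directly from the index formula~\eqref{eqn:index} with $n=2$, comparing the three contributions (Euler characteristic, relative first Chern number, Conley--Zehnder terms) of $u=v\circ\phi$ term by term with those of $v$. First I fix unitary trivializations $\Phi$ of $\xi_\pm$ along each asymptotic orbit of $v$; at a puncture of $\dot\Sigma$ over which $\phi$ has ramification index $d$, the curve $u$ is asymptotic to the $d$-fold cover of the corresponding orbit of $v$, and I equip that puncture with the induced trivialization $\Phi^d$. Since all asymptotic orbits of $v$ are nondegenerate and hyperbolic, the same is true of all their covers, so Lemma~\ref{czcover} applies to every orbit that appears.

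For the Conley--Zehnder terms: given a positive puncture $z'$ of $v$ with asymptotic orbit $\gamma'$, let $d_1,\dots,d_r$ be the ramification indices of the points of $\phi^{-1}(z')$, so $\sum_j d_j = k$. Because $\gamma'$ is hyperbolic, Lemma~\ref{czcover} gives $\cz{(\gamma')^{d_j}}{\Phi^{d_j}} = d_j\,\cz{\gamma'}{\Phi}$, so the total contribution to~\eqref{eqn:index} from the punctures of $u$ lying over $z'$ equals $\bigl(\sum_j d_j\bigr)\cz{\gamma'}{\Phi} = k\,\cz{\gamma'}{\Phi}$. Summing over all positive punctures of $v$, and likewise over the negative ones, the Conley--Zehnder contribution to $\ind{u}$ is exactly $k$ times that to $\ind{v}$. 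For the Chern term, $u^*T\overline{W} = \phi^*(v^*T\overline{W})$, and the relative first Chern number is multiplicative under pullback by a degree-$k$ branched cover with the induced asymptotic trivializations: a section of the determinant line bundle downstairs that is constant near punctures pulls back to such a section whose zeros, weighted by the ramification of $\phi$, total $k$ times the original count. Hence $c_1^\Phi(u^*T\overline{W}) = k\,c_1^\Phi(v^*T\overline{W})$.

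It remains to handle the Euler characteristic. Passing to the closed surfaces, Riemann--Hurwitz together with the bookkeeping relating $\#\Gamma$, $\#\Gamma'$ and the ramification of $\phi$ at the punctures yields
$$
-\chi(\dot\Sigma) \;=\; -k\,\chi(\dot\Sigma') + Z, \qquad Z := \sum_{p\in\dot\Sigma}\bigl(e_\phi(p)-1\bigr) \;\ge\; 0,
$$
where $e_\phi(p)$ is the ramification index of $\phi$ at $p$ and $Z$ is the total branching of $\phi$ over the \emph{punctured} surface. Combining the three computations and using $(n-3)=-1$ gives $\ind{u} = k\,\ind{v} + Z$, which is the desired inequality, with equality precisely when $Z=0$, i.e.\ when $\phi$ has no branch points in $\dot\Sigma$.

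I do not expect a genuine obstacle here; the computation is elementary once the pieces are aligned. The two points requiring care are that the Conley--Zehnder index is truly \emph{additive} under covers precisely because the orbits are hyperbolic---for elliptic orbits one would only get the floor-function inequalities used in Proposition~\ref{prop:trivCyls}, and exact equality would fail---and the Riemann--Hurwitz bookkeeping that isolates the interior branching $Z$ as the sole error term. Getting the consistency of the induced trivializations $\Phi^d$ at the ramified punctures right is what makes both the Chern-number and the Conley--Zehnder comparisons go through term by term.
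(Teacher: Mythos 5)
Your proof is correct and is essentially the paper's argument: the paper likewise deduces the statement from the index formula \eqref{eqn:index}, the multiplicativity $\cz{\c^k}{\Phi^k}=k\,\cz{\c}{\Phi}$ for hyperbolic orbits from Lemma~\ref{czcover}, and the Riemann--Hurwitz relation $Z(d\phi)=-\chi(\dot{\Sigma})+k\chi(\dot{\Sigma}')$ on the punctured surfaces, yielding $\ind{u}=k\ind{v}+Z$ with $Z\ge 0$ vanishing exactly when $\phi$ is unbranched on $\dot{\Sigma}$. You merely spell out the trivialization and Chern-number bookkeeping that the paper leaves implicit.
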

\begin{proof}
This is a direct consequence of the index formula \eqref{eqn:index}
together with Lemma~\ref{czcover} and the Riemann-Hurwitz formula
$Z(d\phi) = -\chi(\dot{\Sigma}) + k \chi(\dot{\Sigma}')$, where
$Z(d\phi) \ge 0$ denotes the algebraic count of zeroes of the holomorphic
section $d\phi \in \Gamma(\Hom_\CC(T\dot{\Sigma},\phi^*T\dot{\Sigma}'))$,
and thus vanishes if and only if the cover is unbranched.
\end{proof}

\subsubsection{Asymptotic defect}

Suppose $u : \dot{\Sigma} \to \overline{W}$ is asymptotic at $z \in \Gamma^\pm$
to a $T$-periodic orbit $\c : S^1 \to M_\pm$ and has an asymptotic
representative $h(s,t)$ at this puncture that is not identically zero.
Then the asymptotic formula \eqref{eqn:asympFormula} provides a nonzero
eigenfunction $e_1 \in \mc{C}^\i(\c^*\xi_\pm)$, and given a
trivialization~$\Phi$ of $\c^*\xi_\pm$, one can define
$$
\op{wind}_\i^\Phi(u;z) := \op{wind}^\Phi(e_1) \in \ZZ.
$$
If $z \in \Gamma^+$, then $\a_-^\Phi(\c)$ is the winding of the 
greatest negative eigenvalue of $A_{\c}$, thus
$\op{wind}_\i^\Phi(u;z)\leq \a_-^\Phi(\c)$, and similarly,
$\op{wind}_\i^\Phi(u;z) \ge \a_+^\Phi(\c)$ if $z \in \Gamma^-$.
The difference $\a_-^\Phi(\c)-\op{wind}_\i^\Phi(u;z)$ or
$\op{wind}_\i^\Phi(u;z) - \a_+^\Phi(\c)$ for a positive or negative puncture
respectively is denoted $d_0(u;z) \ge 0$ and called the \defin{asymptotic defect} 
of $u$ at $z \in \Gamma$.  Notice that it does not depend on the trivialization.
The total asymptotic defect of $u$ is then a nonnegative integer
$$
d_0(u)=\sum_{z \in \Gamma} d_0(u;z).
$$
This is well defined for any curve $u$ that is not identical to a trivial
cylinder in some neighbourhood of any of its punctures; in particular,
if $\overline{W}$ is a symplectization $(\RR \times M,d(e^r\alpha))$ with
$J \in \jJ(\alpha)$, then $d_0(u)$ is well defined for every curve other
than covers of trivial cylinders.

\subsubsection{The normal Chern number and $\windpi(u)$}

The \defin{normal Chern number} of a curve $u \in \mM(J,\boldsymbol{\gamma}^+,\boldsymbol{\gamma}^-)$
with all asymptotic orbits nondegenerate is defined by
$$
c_N(u)= c_1^\Phi(u^*T\overline{W}) - \chi(\dot{\Sigma}) +
\sum_{i=1}^{k_+} \a_-^\Phi(\gamma_i^+) -
\sum_{i=1}^{k_-} \a_+^\Phi(\gamma_i^-),
$$
where $\Phi$ is again an arbitrary choice of unitary trivializations
of $\xi_\pm$ along the asymptotic orbits, and the sum does not depend on this
choice.  The index formula and relations between Conley-Zehnder indices and
winding numbers imply
\begin{equation} \label{cneq}
2c_N(u)=\ind{u} -2+2g + \#\Gamma_0,
\end{equation}
where $g \ge 0$ is the genus of the domain (zero in our case) 
and $\Gamma_0 \subset \Gamma$ denotes the set of punctures of $u$ that
have even parity.  In the Morse-Bott setting of \eqref{eqn:indexMB},
the definition of $c_N(u)$ given above remains valid, and so does
\eqref{cneq} after interpreting $\Gamma_0$ as the set of
punctures for which the \emph{perturbed} Conley-Zehnder index 
(see \eqref{eqn:perturbedCZ}) is even.
One can interpret $c_N(u)$ as ``$c_1$ of the normal bundle''
when $u$ is immersed; in particular, $c_N(u)$ then predicts the number of 
zeroes for a generic section in the kernel of the linearized normal
deformation operator at~$u$, see e.g.~\cite{Wendl:automatic}.

For curves in the symplectization $\mb{R} \times M$ of a contact manifold 
$(M,\xi=\ker\alpha)$, there is a further invariant related to $c_N(u)$ and the 
asymptotic defect.  Let $\pi : TM \to \xi$ denote the fibrewise linear 
projection along the
Reeb vector field.  Then the nonlinear Cauchy-Riemann equation for 
$u : \dot{\Sigma} \to \RR \times M$ implies that
$\pi \circ du \in \mc{C}^\infty(\Hom_\CC(T\dot{\Sigma},u^*\xi))$ locally
satisfies a linear Cauchy-Riemann type equation, so zeroes of $\pi \circ du$
are isolated and positive by the similarity principle unless
$\pi \circ du \equiv 0$.  The latter is the case if and only if $u$ is a
cover of a trivial cylinder, and otherwise, we define
$$
\windpi(u) \ge 0
$$
to be the algebraic count of zeroes of $\pi \circ du$.  The asymptotic
formula \eqref{eqn:asympFormula} implies that zeroes of $\pi \circ du$ cannot
accumulate near infinity, so $\windpi(u)$ is always finite.
It equals $0$ if and only if $u = (u_\RR,u_M) : \dot{\Sigma} \to \RR \times M$
has the property that $u_M : \dot{\Sigma} \to M$ is an immersion transverse
to the Reeb vector field.
From \cite{HWZ:props2}*{Prop.~5.6}, we have
\begin{equation}
\label{eqn:windpiRelation}
c_N(u) = \windpi(u) + d_0(u).
\end{equation}
In particular, this implies
\begin{equation} \label{cnineq}
c_N(u)\geq d_0(u) \geq 0 \quad \text{ and } \quad
c_N(u) \ge \windpi(u) \ge 0
\end{equation}
for any curve that is not a cover of a trivial cylinder, so $c_N(u) = 0$
gives a homotopy-invariant sufficient condition for both the asymptotic
defect and $\windpi(u)$ to vanish.

\subsubsection{Self-linking numbers}
Let $\c$ be a nullhomologous transverse knot in a closed contact 
$3$-manifold $(M,\xi)$, let $\Sigma \subset M$ be a Seifert surface and $X$ 
a framing of $\c$, i.e.~a non-zero section of $\c^*\xi$.
The \defin{self-linking number} of $\c$ with respect to $X$ is 
then the  algebraic count of intersections between $\Sigma$ and a generic 
push-off of $\gamma$ in the direction of~$X$:
$$
\selflinking(\c,X) = (\exp_\c X) \cdot \Sigma \in \ZZ.
$$ 
Note that this depends on $X$ up to homotopy, but not on $\Sigma$, as a
different choice of Seifert surface changes $\selflinking(\c,X)$
by the homological intersection number of $\c$ with a closed $2$-cycle,
which vanishes since $\c$ is nullhomologous.
Replacing $X$ with another framing changes $\selflinking(\c,X)$ by the 
relative winding of the two framings,
\begin{equation}
\selflinking(\c, X_1) - \selflinking(\c, X_2) = \op{wind}(X_1, X_2),
\end{equation}
where $\wind(X_1,X_2) \in \ZZ$ denotes the winding number of the section $X_1$
along $\gamma$ in the trivialization induced by~$X_2$.
Note that the Seifert surface determines a canonical homotopy class of 
framings $X_\Sigma$ via the condition that $X_\Sigma$ should extend to a trivialization 
of $\xi$ along~$\Sigma$, so with this choice we shall denote
$$
\selflinking(\c;\Sigma) := \selflinking(\c,X_\Sigma).
$$
This depends on $\Sigma$ since $X_\Sigma$ does, but the dependence 
vanishes if $c_1(\xi) = 0$.

With this definition in mind, suppose $\c$ is an unknotted Reeb orbit 
and $u = (u_\RR,u_M) : \mb{C} \rightarrow \mb{R} \times M$ is a $J$-holomorphic 
plane asymptotic to $\c$ for which $u_M : \CC \to M$ is embedded. 
The closure of $u_M(\CC)$ is then a Seifert disk $\dD \subset M$ for~$\gamma$,
and we claim
\begin{equation}\label{selflinking}
\selflinking(\c;\dD)=\op{wind}(X_\dD, e_1(u)),
\end{equation}
where $X_\dD$ is the canonical framing determined by $\dD$ as discussed above,
and $e_1(u)$ is the nonzero eigenfunction appearing in the asymptotic formula
\eqref{eqn:asympFormula} for the approach of $u$ to~$\gamma$.
Indeed, $e_1(u)$ gives the direction of the approach of $u$ to~$\gamma$ and
is thus homotopic to the Seifert framing of~$\gamma$, implying
$\selflinking(\c,e_1(u)) = 0$, so
$$
\selflinking(\c;\dD)=\selflinking(\c, X_\dD)=\selflinking(\c,e_1(u))+ \op{wind}(X_\dD, e_1(u))= \op{wind}(X_\dD, e_1(u)).
$$

\subsubsection{Siefring intersection theory}
\label{sec:intersection}

We recall here some useful properties of the intersection product on classes 
of $J$-holomorphic curves in almost complex manifolds with cylindrical ends.
In \cite{Siefring:intersection}, Siefring associates to any pair of
(not necessarily $J$-holomorphic) asymptotically cylindrical maps 
$u_1 : \dot{\Sigma}_1 \to \overline{W}$ and $u_2 : \dot{\Sigma}_2 \to \overline{W}$
with nondegenerate or Morse-Bott asymptotic orbits an integer
$$
u_1 * u_2 \in \ZZ,
$$
which matches the homological intersection number $[u_1] \cdot [u_2]$ if
both curves have no punctures, and in general has the following properties.
First, the pairing is symmetric
$$
u_1 * u_2 = u_2 * u_1,
$$
and it is invariant under homotopies of asymptotically cylindrical maps with
fixed asymptotic orbits;
in fact, $u_1 * u_2$ depends only on the asymptotic orbits of $u_1$ and $u_2$ and
their relative homology classes.  If both maps are $J$-holomorphic and their
images are non-identical, then the relative asymptotic results of
\cite{Siefring:asymptotics} imply that all intersections between $u_1$ and
$u_2$ are isolated and contained in a compact subset, so by positivity
of intersections, the algebraic count of intersections $u_1 \cdot u_2$ is
finite and satisfies
$$
u_1 \cdot u_2 \ge \left| \left\{ (z_1,z_2) \in \dot{\Sigma}_1 \times \dot{\Sigma}_2\ 
\Big|\ u_1(z_1) = u_2(z_2) \right\}\right|,
$$
with equality if and only if all intersections are transverse.  This is then
related to $u_1 * u_2$ by
$$
u_1 * u_2 \ge u_1 \cdot u_2,
$$
so the condition $u_1 * u_2 = 0$ gives a homotopy-invariant sufficient 
condition for $u_1$ and $u_2$ to be disjoint.  The following computation is
an easy consequence of the definition (cf.~\cite{Siefring:intersection}*{Prop.~5.6}):

\begin{prop}
\label{prop:evenCylinder}
Suppose $J \in \jJ(\alpha)$ for a contact $3$-manifold $(M,\xi=\ker\alpha)$,
and $u$ and $v$ are both $J$-holomorphic covers of the same trivial cylinder 
in $(\RR \times M,J)$ over a nondegenerate Reeb orbit with even parity.
Then $u*v = 0$.
\qed
\end{prop}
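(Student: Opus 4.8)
The plan is to compute $u*v$ directly from the definition of Siefring's intersection pairing \cite{Siefring:intersection}, exploiting on the one hand that both curves map into a trivial cylinder, whose normal bundle is trivial, and on the other that an orbit of even parity has coinciding extremal asymptotic winding numbers. Write $\gamma$ for the nondegenerate, even-parity Reeb orbit over which the trivial cylinder sits, so that every asymptotic orbit of $u$ and of $v$ is some iterate $\gamma^j$. Since an elliptic orbit always has odd Conley--Zehnder index, the hypothesis $p(\gamma)=0$ forces $\gamma$ to be hyperbolic---indeed positive hyperbolic, so that all its iterates are again positive hyperbolic and in particular nondegenerate. By Lemma~\ref{czcover} we then have $\cz{\gamma^j}{\Phi^j}=j\,\cz{\gamma}{\Phi}$ for any unitary trivialization $\Phi$ of $\gamma^*\xi$, and since $\cz{\gamma}{\Phi}$ is even so is $\cz{\gamma^j}{\Phi^j}$; hence $\alpha^{\Phi^j}_+(\gamma^j)=\alpha^{\Phi^j}_-(\gamma^j)=\tfrac12\cz{\gamma^j}{\Phi^j}$ for every $j$.

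The first step would be to observe that the relative intersection number $u\cdot_\Phi v$ vanishes for every trivialization $\Phi$ of $\gamma^*\xi$: the pullback normal bundle of the trivial cylinder over $\gamma$ is trivial and $\Phi$ extends over the whole cylinder, so a small pushoff of $v$ in the corresponding normal direction has image disjoint from that of $u$, whence the $\Phi$-framed count $u\cdot_\Phi v$ is zero. Siefring's defining formula $u*v = u\cdot_\Phi v + \Omega_\infty^\Phi$ then reduces to $u*v=\Omega_\infty^\Phi$, where $\Omega_\infty^\Phi$ is the asymptotic correction term: a sum, over pairs consisting of one puncture of $u$ and one puncture of $v$ of the same sign, of contributions each of which is a function only of the two covering multiplicities involved and of the extremal winding numbers $\alpha^\Phi_\pm$ at the corresponding orbits (see \cite{Siefring:intersection}*{Prop.~5.6}). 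The second step is then to fix $\Phi$ with $\cz{\gamma}{\Phi}=0$---possible because this index is even---so that, by the identity recorded above, every extremal winding $\alpha^{\Phi^j}_\pm(\gamma^j)$ equals zero. Every term of $\Omega_\infty^\Phi$ then vanishes, and therefore $u*v=0$.

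The step requiring the most care is the bookkeeping in $\Omega_\infty^\Phi$: one must recall precisely from \cite{Siefring:intersection} the shape of each asymptotic contribution and confirm that it is built entirely out of the covering multiplicities and the extremal winding data, so that it collapses to zero once all of those windings have been arranged to vanish. The cleanest route is to quote the explicit computation of \cite{Siefring:intersection}*{Prop.~5.6} for covers of trivial cylinders and simply observe that the even-parity hypothesis forces the resulting expression to vanish; for a general orbit of odd parity that same expression need not be zero, and all that is being asserted here is that the even case is degenerate.
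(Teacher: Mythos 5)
Your overall strategy -- compute $u*v$ straight from Siefring's definition, note that the relative intersection number vanishes because both curves lie in the embedded cylinder over the underlying orbit so any pushoff in a $\xi$-direction misses it, and then control the asymptotic correction terms using even parity -- is exactly the computation the paper has in mind when it cites \cite{Siefring:intersection}*{Prop.~5.6}, and your first step is fine. The gap is in the second step, and it occurs precisely in the case this paper actually needs: the orbit $\gamma$ in the statement is allowed to be multiply covered (in the compactness argument it is typically a \emph{doubly covered bad orbit}, i.e.\ $\gamma = \beta^2$ with $\beta$ simple and negative hyperbolic). Your reduction ``even parity $\Rightarrow$ positive hyperbolic $\Rightarrow$ choose $\Phi$ with $\cz{\gamma}{\Phi}=0$'' is correct as far as it goes, but the closed-form expression for the asymptotic correction terms (the one involving covering multiplicities and the extremal windings $\a_\pm$) is valid only when the trivializations at the various covers are pulled back from a single trivialization of $\xi$ along the \emph{underlying simple orbit}. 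When $\beta$ is negative hyperbolic, $\cz{\beta}{\tau}$ is odd for every $\tau$, so $\cz{\gamma^j}{\tau^{2j}} = 2j\,\cz{\beta}{\tau}$ is never zero; since the homotopy class of a trivialization is pinned down by the Conley--Zehnder index it produces, your $\Phi$ with $\cz{\gamma}{\Phi}=0$ is \emph{not} induced from $\beta$, and plugging its windings into the formula is not legitimate. Concretely, with coherent trivializations the extremal windings at the even covers $\beta^{2j}$ equal $j\,\cz{\beta}{\tau}\neq 0$, so the individual correction terms do \emph{not} vanish; the proposition is still true, but for a different reason.

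The correct bookkeeping in that case is a cancellation rather than term-by-term vanishing: because every breaking/asymptotic orbit involved is an even cover of $\beta$, it has even parity, so $\a_+^\tau = \a_-^\tau$ there, which makes each positive-pair correction equal to the corresponding negative-pair correction built from the same pair of multiplicities; and since $u$ and $v$ are covers of a trivial cylinder, the total covering multiplicity of their positive punctures equals that of their negative punctures, so the two sums cancel and $u*v = u\bullet_\Phi v = 0$. (Equivalently, one can argue as the paper does in the proof of Proposition~\ref{prop:intBuilding}, where only the trivialization-independent combination $\Omega^\Phi_+ + \Omega^\Phi_- = m\,p$ is ever used.) Your argument as written is complete only when the underlying simple orbit itself has even parity -- there a coherent winding-killing trivialization exists and your per-term vanishing is genuine -- so to repair the proof you should either add the cancellation argument above for the case of even covers of a negative hyperbolic orbit, or carry out the computation with coherent trivializations from the start and use $\a_+=\a_-$ at each even-parity cover together with the equality of total positive and negative multiplicities.
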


The intersection product also has a natural extension to holomorphic
buildings such that homotopy invariance holds for all continuous deformations
in the SFT topology.  We will need a particular result about this extension:

\begin{prop}
\label{prop:intBuilding}
If $v = (v_{N_+}^+ | \ldots | v_1^+ | v_0 | v_1^- | \ldots | v_{N_-}^-)$ is a
holomorphic building in a $4$-dimensional completed symplectic cobordism, we have
\begin{equation*}
\begin{split}
v * v \ge \sum_{j=1}^{N_+} v_j^+ * v_j^+ + v_0 * v_0 +
\sum_{j=1}^{N_-} v_j^- * v_j^- + \sum_{\gamma} m(\gamma) p(\gamma),
\end{split}
\end{equation*}
where the last sum is over all orbits $\gamma$ that occur as breaking orbits
in~$v$, with covering multiplicities denoted by $m(\gamma) \in \NN$.
\end{prop}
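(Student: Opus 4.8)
The plan is to compute $v*v$ directly from Siefring's definition of the intersection pairing on holomorphic buildings, using the fact quoted above that $u_1*u_2$ depends only on the asymptotic orbits of $u_1,u_2$ and their relative homology classes, and that the pairing is continuous -- hence locally constant -- under deformations in the SFT topology. Concretely, I would view each level of $v$ as determining a relative homology class in a copy of $\overline{W}$: push each $v_j^\pm$ into the corresponding cylindrical end and leave $v_0$ where it is. Gluing these classes along the breaking orbits recovers the total relative class of $v$, whose asymptotic orbits are exactly the outer (non-breaking) orbits of the building, and $v*v$ is the Siefring self-pairing of this total class. The strategy is then to decompose this self-pairing into (i) the self-pairings of the individual level classes, (ii) the mutual pairings of pairs of distinct levels, and (iii) correction terms supported at the breaking orbits, and to bound (ii) and (iii) from below.

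For (i), one checks that passing from $\RR\times M_\pm$ into the cylindrical end of $\overline{W}$ changes neither the relative homology class nor the asymptotic orbits, so the self-pairing of the $j$th level class equals $v_j^\pm*v_j^\pm$ (resp.\ $v_0*v_0$). For (ii), I would argue that the mutual pairing of two distinct levels is $\ge 0$: by homotopy invariance it may be computed between generic asymptotically cylindrical representatives, and since components living in different levels of a building are never identical curves (after the degeneration they represent geometrically disjoint pieces), positivity of intersections gives nonnegativity; the only subtle case -- two components in different levels covering the same underlying simple curve -- is handled by the fact that the $*$-pairing still dominates the nonnegative geometric count there, and in the pure trivial-cylinder situation by Proposition~\ref{prop:evenCylinder}.

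Step (iii) is where I expect the real work. At a breaking orbit $\gamma$ of covering multiplicity $m=m(\gamma)$, the correction records the intersection data created when the matched negative and positive ends (asymptotic to $\gamma$ from the level above and below respectively) are joined. Using Siefring's formula for the asymptotic contribution of a pair of ends asymptotic to $\gamma$ -- expressed through the winding numbers $\a_\pm^\Phi(\gamma)$ of the extremal eigenvalues of $A_\gamma$ and the asymptotic eigenfunctions appearing in \eqref{eqn:asympFormula} -- together with Proposition~\ref{spectral} and the parity relation \eqref{eqn:parity}, one computes that this correction is bounded below by $m\,p(\gamma)$: when $p(\gamma)=0$ it is nonnegative and vanishes for the pure trivial-cylinder neck (cf.\ Proposition~\ref{prop:evenCylinder}), while the parity defect $\a_+^\Phi(\gamma)-\a_-^\Phi(\gamma)=1$ of an odd orbit forces an extra $+1$ per covering sheet of the breaking eigenfunction, i.e.\ $m$ in total. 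Summing over all breaking orbits and combining with (ii) yields the claimed inequality.

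The main obstacle is the bookkeeping in step (iii): one must attribute the breaking-orbit correction consistently and not confuse it with the hidden-intersection-at-infinity contributions $\delta_\infty$ already built into the ends of the adjacent levels, and one must verify that the covering multiplicity $m(\gamma)$ enters with exactly the right power. This is precisely the accounting that Siefring's intersection theory \cite{Siefring:intersection} is designed to make rigorous, and the fact that we claim only an inequality rather than an exact formula is what lets us absorb the cross terms from (ii) together with any hidden intersections along breaking orbits that are not a priori forced to appear.
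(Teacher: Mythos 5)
Your strategy departs from the paper's, and as structured it has a genuine gap in steps (ii)--(iii). The pairing $v*v$ of a building contains no cross-terms between distinct levels at all: intersections between portions of an approximating sequence of curves that converge into different levels escape into the breaking necks, and they are precisely what the asymptotic winding corrections at the breaking punctures record. Test your decomposition on a two-level building $(v^+\,|\,v^-)$ in which $v^+$ has a single negative puncture and $v^-$ a single positive puncture, both at a breaking orbit of multiplicity $m$ and parity $p$: the discrepancy between $v*v$ and $v^+*v^+ + v^-*v^-$ is exactly the asymptotic term $m\,p$ localized at the breaking punctures (this is what the paper's computation yields, with equality), whereas the mutual pairing $v^+*v^-$ can perfectly well be strictly positive---the levels live in different copies of $\RR\times M$, and nothing prevents superimposed representatives from intersecting or from linking asymptotically at the shared orbit. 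If your identity $v*v=\sum_j v_j*v_j+\sum_{j\ne j'}v_j*v_{j'}+(\mathrm{iii})$ held, it would force $(\mathrm{iii})=m\,p-2\,v^+*v^-$, which can be strictly smaller than $m\,p$, so either the decomposition is not an identity or your lower bound on (iii) fails; note that inserting additional nonnegative cross-terms on the right makes the desired inequality \emph{harder}, not easier, so they cannot simply be ``absorbed.'' Moreover, even the nonnegativity claimed in (ii) is unavailable in the stated generality: Proposition~\ref{prop:evenCylinder} requires even parity, and a building may have two distinct levels each containing covers of the same trivial cylinder over an odd orbit, whose mutual pairing is negative (for two copies of the trivial cylinder over a simple odd orbit it is $-p(\gamma)=-1$). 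The paper's compactness proof needs separate arguments to handle odd orbits precisely for this reason, and those arguments are not available at the level of generality of this proposition.

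The paper's proof avoids all of this by never introducing cross-terms. One writes $u*w = u\bullet_\Phi w-\sum\Omega^\Phi_+(\gamma_z,\gamma_\zeta)-\sum\Omega^\Phi_-(\gamma_z,\gamma_\zeta)$, where $\bullet_\Phi$ is the relative intersection number with respect to asymptotic trivializations and the sums run over pairs of positive, respectively negative, punctures; this formula remains valid for buildings, and the relative intersection numbers are additive across levels. Consequently $v*v$ minus the sum of the level self-pairings is exactly a sum of terms $\Omega^\Phi_+(\gamma,\gamma')+\Omega^\Phi_-(\gamma,\gamma')$ attached to pairs of breaking orbits; each such term is nonnegative, and for every breaking orbit $\gamma^m$ the diagonal pair contributes $\Omega^\Phi_+(\gamma^m,\gamma^m)+\Omega^\Phi_-(\gamma^m,\gamma^m)=m\a^\Phi_+(\gamma^m)-m\a^\Phi_-(\gamma^m)=m\,p(\gamma^m)$ by \eqref{eqn:parity}. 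Your winding-number computation in step (iii) is essentially this diagonal evaluation, so what is missing is not the asymptotic input but the bookkeeping: the entire discrepancy is asymptotic and localized at the breaking punctures (cross-terms occur only between components of the \emph{same} level, where they are already part of that level's self-pairing), and there is no separate geometric contribution from pairs of distinct levels to estimate.
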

\begin{proof}
The existence of a formula
$$
v * v = \sum_{j=1}^{N_+} v_j^+ * v_j^+ + v_0 * v_0 +
\sum_{j=1}^{N_-} v_j^- * v_j^- + Q
$$
with some error term $Q \ge 0$ is stated in 
\cite{Siefring:intersection}*{Prop.~4.3(4)}, and our lower bound on the error term
can be extracted from the proof of that result.  The point is the following.
Using notation from \cite{Wendl:Durham}, the pairing $u*w$ can be written
in general as
$$
u * w = u \bullet_\Phi w - \sum_{(z,\zeta)} \Omega^\Phi_+(\gamma_z,\gamma_\zeta) -
\sum_{(z,\zeta)} \Omega^\Phi_-(\gamma_z,\gamma_\zeta),
$$
where $u \bullet_\Phi w \in \ZZ$ denotes the relative intersection number of $u$ and
$w$ with respect to an arbitrarily chosen asymptotic trivialization~$\Phi$,
the two sums are over all pairs of positive resp.~negative punctures 
$z$ of $u$ and $\zeta$ of~$w$, $\gamma_z$ and $\gamma_\zeta$ are the
corresponding asymptotic orbits, and $\Omega^\Phi_\pm(\gamma_z,\gamma_\zeta)$
are integers determined by the winding numbers $\a^\Phi_\mp(\gamma_z)$
and $\a^\Phi_\mp(\gamma_\zeta)$ (see \cite{Wendl:Durham}*{\S 4.2} for a precise formula).
The same formula for $u*w$ is valid if $u$ and $w$ are buildings, and the
relative intersection numbers are additive across levels.  The difference
between $v*v$ and the sum of the invariant self-intersection numbers of its
levels is therefore a sum of terms of the form
$\Omega^\Phi_+(\gamma,\gamma') + \Omega^\Phi_-(\gamma,\gamma')$ where
$\gamma$ and $\gamma'$ are pairs of breaking orbits of~$v$.  All of these
terms are nonnegative, and in particular whenever $\gamma^m$ is a breaking
orbit (with $\gamma$ denoting the underlying simply covered orbit), they include
$$
\Omega^\Phi_+(\gamma^m,\gamma^m) + \Omega^\Phi_-(\gamma^m,\gamma^m) =
m \a^\Phi_+(\gamma^m) - m \a^\Phi_-(\gamma^m) = m p(\gamma^m).
$$
\end{proof}

If $u : \dot{\Sigma} \to \overline{W}$ is somewhere injective and
$J$-holomorphic, then the relative asymptotic results of \cite{Siefring:asymptotics}
also imply that it is embedded outside a compact subset, so there is a
finite singularity count $\delta(u) \in \ZZ$, defined as the algebraic
count of double points $\{ (z_1,z_2) \in \dot{\Sigma} \times \dot{\Sigma}\ |\  
u(z_1) = u(z_2) \text{ and } z_1 \ne z_2 \}$ after perturbing $u$ in a compact
subset to make it immersed.  
Standard local results due to Micallef and White \cite{MicallefWhite}
imply that $\delta(u) \ge 0$ with equality if and only if $u$ is embedded,
but in contrast to the closed case, $\delta(u)$ is not generally homotopy invariant.
Instead, it satisfies the generalized adjunction formula
\begin{equation}
\label{eqn:adjunction}
u * u = 2 \dtotal(u) + c_N(u) + \left[ \bar{\sigma}(u) - \#\Gamma \right],
\end{equation}
where
$$
\dtotal(u) = \delta(u) + \dinfty(u)
$$
includes an additional contribution $\dinfty(u) \ge 0$ counting ``hidden''
double points that can emerge from infinity under generic perturbations,
and the term $\bar{\sigma}(u) \in \NN$ is a sum of the spectral covering
numbers (see \S\ref{sec:Reeb}) of all asymptotic orbits, hence
$\bar{\sigma}(u) - \#\Gamma$ is also nonnegative.
The formula implies that $\dtotal(u)$ is homotopy invariant, and since
$\dinfty(u) \ge 0$, the condition $\dtotal(u) = 0$ then suffices to
ensure that all somewhere injective curves homotopic to $u$ are embedded.
The converse is false in general: a curve can still be embedded with
$\dtotal(u) > 0$ due to hidden intersections, which can emerge from infinity
under perturbations---but this can only happen if $u$ has at least one
multiply covered asymptotic orbit or at least two punctures of the same sign
that approach covers of the same orbit, thus giving the following useful
criterion:

\begin{lem}
\label{lemma:dinfty}
If $u$ is a somewhere injective curve whose asymptotic orbits are all
distinct and simple, then $\dinfty(u) = \bar{\sigma}(u) - \#\Gamma = 0$.
\qed
\end{lem}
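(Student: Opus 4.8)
The plan is to prove the two equalities $\bar{\sigma}(u) - \#\Gamma = 0$ and $\dinfty(u) = 0$ separately, in each case writing the left-hand side as a sum of manifestly nonnegative contributions attached to the punctures of $u$ (or to pairs of punctures) and checking that each such contribution vanishes under the distinctness-and-simplicity hypothesis. The first equality is essentially a restatement of the definition: $\bar{\sigma}(u)$ is by construction the sum, over the $\#\Gamma$ punctures of $u$, of the spectral covering numbers $\bar{\sigma}_-(\gamma_z)$ (for positive punctures) or $\bar{\sigma}_+(\gamma_z)$ (for negative punctures) of the corresponding asymptotic orbits, each of which is a positive integer. As recalled in \S\ref{sec:Reeb}, $\bar{\sigma}_\pm(\gamma)$ is the covering multiplicity of a certain eigenspace of the asymptotic operator $A_\gamma$, and an eigenfunction living over a \emph{simple} orbit has nothing it could be a nontrivial cover of, so $\bar{\sigma}_\pm(\gamma) = 1$ whenever $\gamma$ is simple. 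Since every $\gamma_z$ is simple, every summand equals $1$ and $\bar{\sigma}(u) = \#\Gamma$.

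For the second equality I would appeal to Siefring's decomposition of the asymptotic intersection contribution from \cite{Siefring:intersection} (in the notation of the proof of Proposition~\ref{prop:intBuilding}, and of \cite{Wendl:Durham}*{\S 4.2}), according to which $\dinfty(u)$ is a sum of nonnegative integers indexed by unordered pairs $\{z,z'\}$ of punctures of $u$ of the same sign --- the diagonal $z = z'$ allowed --- whose asymptotic orbits are both covers of one common simple Reeb orbit. Two observations finish the job. If $z \ne z'$, then $\gamma_z$ and $\gamma_{z'}$ are distinct simple orbits; but a simple orbit that is a multiple cover of an orbit $\gamma_0$ must in fact coincide with $\gamma_0$, so $\gamma_z$ and $\gamma_{z'}$ cannot both be covers of a common orbit, and no off-diagonal pair contributes. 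For the diagonal, the self-contribution at a puncture $z$ vanishes once its asymptotic orbit $\gamma_z$ is simply covered, because then the asymptotic eigenfunction governing the approach of $u$ to $\gamma_z$ is a nowhere-vanishing section of $\gamma_z^*\xi$ tracing out an embedded loop, so the model end of $u$ at $z$ is already embedded near infinity and there are no double points that could be hidden there. Hence every local contribution is zero, and $\dinfty(u) = 0$.

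I expect the only genuine work to be the bookkeeping in the second paragraph: one must pin down the precise shape of Siefring's definition of $\dinfty$ and verify that the diagonal self-term at a puncture is literally zero, not merely nonnegative, when the underlying orbit is simple. This is exactly the quantitative content of the informal principle stated just before the lemma --- that hidden intersections at infinity can arise only in the presence of a multiply covered asymptotic orbit or of two same-sign punctures approaching covers of a common orbit --- so once that principle is made precise for $\dinfty$, both equalities follow at once and no further estimates are needed.
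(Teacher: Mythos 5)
Your proposal is correct and follows essentially the same route as the paper, which states the lemma with no separate proof precisely because, as in your write-up, both $\bar{\sigma}(u) - \#\Gamma$ and $\dinfty(u)$ decompose into nonnegative local contributions (one per puncture, resp.\ per same-sign pair of punctures asymptotic to covers of a common orbit) that all vanish when every asymptotic orbit is simple and no two punctures share an orbit. The one caveat, which you flag yourself, is that the vanishing of the diagonal term at a simply covered puncture comes from Siefring's definition of the asymptotic self-intersection index (there are no distinct sheets whose relative asymptotics could produce intersections under perturbation), not from embeddedness of the end near infinity --- embedded ends over multiply covered orbits can still hide intersections --- but this does not affect the conclusion.
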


The following is a minor improvement on a definition originating in
\cites{Wendl:compactnessRinvt,Wendl:automatic}.

\begin{defn}\label{ne}
An asymptotically cylindrical $J$-holomorphic curve $u : \dot{\Sigma} \to \overline{W}$
is called \defin{nicely embedded} if it is somewhere injective and satisfies
$u * u \le 0$ and $\dtotal(u) = 0$.
\end{defn}

It is clear from the above discussion that if $u$ is nicely embedded, then
so is any other somewhere injective curve $u'$ in the same component of the
moduli space, and moreover, $u$ and $u'$ must then be disjoint.  Nicely embedded
curves arise naturally in the study of finite energy foliations,
initiated in \cite{HWZ:foliations}.  Their most important properties for our
purposes are the following.

\begin{lem}
\label{lemma:neIndex}
If $u \in \mM(J,\boldsymbol{\gamma}^+,\boldsymbol{\gamma}^-)$ is nicely
embedded then $c_N(u) \le 0$ and $\ind{u} \le 2$.
\end{lem}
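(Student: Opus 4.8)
The plan is to read both inequalities directly off the two structural identities already assembled in \S\ref{sec:intersection}, namely the generalized adjunction formula \eqref{eqn:adjunction} and the relation \eqref{cneq} between $c_N(u)$ and $\ind{u}$.

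First I would establish $c_N(u) \le 0$. By Definition~\ref{ne}, $u$ is somewhere injective with $\dtotal(u) = 0$, so the adjunction formula \eqref{eqn:adjunction} degenerates to
$$
u * u = c_N(u) + \left[ \bar{\sigma}(u) - \#\Gamma \right].
$$
Since $\bar{\sigma}(u) - \#\Gamma \ge 0$ (each spectral covering number is a positive integer, so $\bar{\sigma}(u)$ is at least $\#\Gamma$), we get $c_N(u) = u*u - [\bar{\sigma}(u) - \#\Gamma] \le u*u$, and the nicely embedded condition $u*u \le 0$ then gives $c_N(u) \le 0$.

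Next I would feed this into \eqref{cneq}. In the nondegenerate case this reads $2c_N(u) = \ind{u} - 2 + 2g + \#\Gamma_0$, hence
$$
\ind{u} = 2c_N(u) + 2 - 2g - \#\Gamma_0 \le 2c_N(u) + 2 \le 2,
$$
using $g \ge 0$, $\#\Gamma_0 \ge 0$, and $c_N(u) \le 0$. The Morse-Bott case is identical once $\Gamma_0$ is reinterpreted via the perturbed Conley-Zehnder index as noted after \eqref{cneq}, so the bound holds in all cases covered by the moduli spaces under consideration.

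There is no real obstacle here: the lemma is a formal bookkeeping consequence of the adjunction and index formulas, and the only point deserving a moment's care is to cite the correct (possibly perturbed) version of \eqref{cneq} when some asymptotic orbits are merely Morse-Bott rather than nondegenerate. If one wants the sharper observation that equality $\ind{u} = 2$ forces $c_N(u) = 0$, $g = 0$, and all punctures odd, that too is immediate from the same computation, though it is not needed for the statement as given.
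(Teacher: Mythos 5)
Your proposal is correct and follows exactly the paper's own (much terser) argument: the bound $c_N(u)\le 0$ comes from the definition of nicely embedded together with the adjunction formula \eqref{eqn:adjunction}, and the index bound then follows from \eqref{cneq}. Your write-up simply makes the nonnegativity of $\bar{\sigma}(u)-\#\Gamma$, $g$, and $\#\Gamma_0$ explicit, including the Morse-Bott reinterpretation of $\Gamma_0$, which is a fine level of detail.
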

\begin{proof}
The first inequality follows directly from the definition and the 
adjunction formula \eqref{eqn:adjunction}, and this implies the second
via \eqref{cneq}.
\end{proof}

\begin{prop}
\label{prop:automatic}
If $u \in \mM(J,\boldsymbol{\gamma}^+,\boldsymbol{\gamma}^-)$ is a nicely
embedded curve with $\ind{u} \in \{1,2\}$, then $u$ is Fredholm regular.
\end{prop}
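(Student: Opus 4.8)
The plan is to reduce Fredholm regularity of $u$ to surjectivity of a single Cauchy--Riemann type operator on a complex line bundle and then invoke the automatic transversality theorem of \cite{Wendl:automatic}. First I would observe that a nicely embedded curve is honestly embedded: the condition $\dtotal(u)=0$ forces $\delta(u)=0$, and by the Micallef--White local structure theorem this means $u$ has neither critical points nor double points, so $u$ is embedded and in particular immersed. For an immersed somewhere injective curve the linearized Cauchy--Riemann operator at $u$ splits, up to a compact perturbation, into a piece controlling reparametrizations of the punctured domain---which is automatically surjective---and the \emph{normal Cauchy--Riemann operator} $\mbf{D}_u^N$ on the complex line bundle $N_u\to\dot\Sigma$, taken with the exponential weights that encode the (fixed) asymptotic orbits $\boldsymbol{\gamma}^\pm$. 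Consequently $\mbf{D}_u^N$ is Fredholm with index equal to $\ind{u}$, and $u$ is Fredholm regular precisely when $\mbf{D}_u^N$ is surjective.

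Next I would carry out---or, more honestly, quote---the automatic transversality argument in its punctured form. If $\mbf{D}_u^N$ were not surjective, then its cokernel, identified via elliptic duality with the kernel of the formal adjoint (again a real-linear Cauchy--Riemann type operator on a complex line bundle over $\dot\Sigma$), would contain a nonzero section. By the similarity principle such a section has only isolated zeros, each contributing a strictly positive local degree, together with a nonnegative asymptotic contribution at each puncture determined by the winding of the relevant asymptotic eigenfunction relative to the chosen weight. Comparing the total count of zeros against the relative first Chern number of the line bundle, and rewriting the resulting inequality via the identity \eqref{cneq}, one obtains that $\coker\mbf{D}_u^N=0$ as soon as $\ind{u}>c_N(u)$. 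This is exactly the automatic transversality criterion of \cite{Wendl:automatic}, applied to the moduli problem with fixed asymptotic orbits; in the Morse--Bott case one reads off $\ind{u}$ and $c_N(u)$ using the perturbed Conley--Zehnder indices as in \eqref{eqn:indexMB}.

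The remaining step is the numerical check, and it is immediate: by Lemma~\ref{lemma:neIndex} a nicely embedded curve has $c_N(u)\le 0$, while the hypothesis is $\ind{u}\in\{1,2\}$, so $\ind{u}\ge 1>0\ge c_N(u)$ and the criterion applies. This also explains the precise shape of the hypothesis: the constraint $\ind{u}\in\{1,2\}$, rather than merely $\ind{u}\ge 0$, is what is required, since a nicely embedded curve with $\ind{u}=0$ and $c_N(u)=0$---for instance one with exactly two even punctures---would sit right at the boundary where automatic transversality can genuinely fail.

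I do not expect any genuinely hard step here; the one place to be careful is the bookkeeping in the second paragraph, namely verifying that the operator whose surjectivity is in question is the \emph{normal} operator with the correct exponential weights at the punctures---so that it really is the space of curves with fixed asymptotic orbits being cut out transversely---and that, with those weights, all the asymptotic contributions to the zero count of a cokernel element are genuinely nonnegative. Both points are established in \cite{Wendl:automatic}, so in the end the proposition amounts to combining that theorem with Lemma~\ref{lemma:neIndex}.
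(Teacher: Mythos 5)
Your proposal is correct and follows essentially the same route as the paper: the paper's proof simply notes that a nicely embedded curve is immersed, invokes Lemma~\ref{lemma:neIndex} to get $c_N(u)\le 0$, and applies the automatic transversality criterion $\ind{u}>c_N(u)$ from \cite{Wendl:automatic}. The only difference is that you also sketch the internal mechanism of that criterion (normal operator, similarity principle, zero count), which the paper leaves entirely to the cited reference.
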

\begin{proof}
Since $u$ is immersed by assumption and, by Lemma~\ref{lemma:neIndex},
satisfies $c_N(u) \le 0$, it satisfies the criterion $\ind{u} > c_N(u)$
for automatic transversality given in \cite{Wendl:automatic}.
\end{proof}

\begin{prop}
\label{prop:localFol}
Suppose $\mM\nice \subset \mM_1(J,\boldsymbol{\gamma}^+,\boldsymbol{\gamma}^-)$ 
is an open and closed subset of the space of nicely embedded 
index~$2$ curves, equipped with the extra data of a marked point, such that
all curves in $\mM\nice$ represent the same relative homology class.
Then $\mM\nice$ is a smooth $4$-manifold, and the evaluation map
$$
\ev : \mM\nice \to \overline{W}
$$
is an embedding onto an open subset of~$\overline{W}$.
\end{prop}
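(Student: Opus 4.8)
The plan is to verify three things in turn: that $\mM\nice$ is a smooth $4$-manifold, that $\ev$ is injective, and that $\ev$ is a local diffeomorphism onto an open subset. An injective local diffeomorphism is automatically an embedding onto an open set, so these suffice. For smoothness: every $u \in \mM\nice$ is somewhere injective by Definition~\ref{ne} and, since $\ind{u} = 2 \in \{1,2\}$, is Fredholm regular by Proposition~\ref{prop:automatic}; hence a neighbourhood of $u$ in the unmarked moduli space $\mM(J,\boldsymbol{\gamma}^+,\boldsymbol{\gamma}^-)$ is a smooth $2$-manifold. Adding a marked point raises the virtual dimension by $2$ and preserves regularity (the forgetful map is locally a product with the domain $\dot{\Sigma}$), so a neighbourhood of $(u,\zeta)$ in $\mM_1(J,\boldsymbol{\gamma}^+,\boldsymbol{\gamma}^-)$ is a smooth $4$-manifold; since $\mM\nice$ is a union of connected components of the set of such curves, it is a smooth $4$-manifold.

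Next I would record that every $u \in \mM\nice$ satisfies $u*u = 0$ and $c_N(u) = 0$ and is embedded. Indeed, the adjunction formula~\eqref{eqn:adjunction} reads $u*u = 2\dtotal(u) + c_N(u) + [\bar{\sigma}(u) - \#\Gamma]$ with all three summands nonnegative: for $c_N(u)$ this follows from~\eqref{cneq}, since $\ind{u}=2$ and the domain has genus zero give $2c_N(u) = \#\Gamma_0 \ge 0$. But ``nicely embedded'' forces $\dtotal(u)=0$ and $u*u \le 0$, so all terms vanish; in particular $c_N(u)=0$, and $\delta(u)=0$ since $\dtotal(u)=\delta(u)+\dinfty(u)$ with both terms nonnegative, i.e.~$u$ is embedded. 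For injectivity of $\ev$, suppose $\ev(u_1,\zeta_1)=\ev(u_2,\zeta_2)=x$ with $(u_i,\zeta_i)\in\mM\nice$. Both curves have the same asymptotic orbits and, by hypothesis, the same relative homology class, so homotopy invariance of the intersection product gives $u_1*u_2 = u_1*u_1 = 0$. If their images were distinct then positivity of intersections at $x$ would force $u_1\cdot u_2 \ge 1 > u_1*u_2$, a contradiction; hence $u_1$ and $u_2$ represent the same curve, and since that curve is embedded the marked point is pinned down by $x$, whence $(u_1,\zeta_1)=(u_2,\zeta_2)$.

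It remains to show $\ev$ is a local diffeomorphism at each $(u,\zeta)\in\mM\nice$. I would decompose $d\ev$ using the local fibration $\mM\nice \to \mM(J,\boldsymbol{\gamma}^+,\boldsymbol{\gamma}^-)$ that forgets $\zeta$: along the fibre direction $T_\zeta\dot{\Sigma}$, $d\ev$ equals $du(\zeta)$, which is injective with image $T_x u(\dot{\Sigma})$ since $u$ is immersed; on a complement, the deformations are controlled by the normal linearized operator $\mathbf D_u^N$, whose kernel is the $2$-dimensional tangent space to the unmarked moduli space at $u$. Since $u$ is immersed and $c_N(u)=0$, every nonzero element of $\ker\mathbf D_u^N$ is nowhere vanishing---its zeros are isolated with positive local degree, and the algebraic count of them is bounded above by $c_N(u)=0$, cf.~\cite{Wendl:automatic}---so evaluation at $\zeta$ identifies $\ker\mathbf D_u^N$ with the normal fibre $N_u|_x$. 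Hence $d\ev$ surjects onto $T_x u(\dot{\Sigma}) \oplus N_u|_x = T_x\overline{W}$, and both spaces being $4$-dimensional, it is an isomorphism. Thus $\ev$ is a local diffeomorphism; combined with injectivity it is an open map and a homeomorphism, hence a diffeomorphism, onto the open subset $\ev(\mM\nice)\subset\overline{W}$, which is then foliated by the images of the individual curves.

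I expect the main obstacle to be this last step: the fact that nonzero elements of $\ker\mathbf D_u^N$ have no zeros is precisely where the nicely-embedded hypothesis is used beyond mere Fredholm regularity (through $c_N(u)=0$), and one must set up the identification of $T_{(u,\zeta)}\mM\nice$ with curve-tangent plus normal deformations carefully enough that the surjectivity of $d\ev$ is transparent. The injectivity argument, by contrast, is a clean application of Siefring's intersection theory together with positivity of intersections, and the smoothness is routine once Proposition~\ref{prop:automatic} is invoked.
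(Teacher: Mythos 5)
Your proposal is correct and follows essentially the same route as the paper's proof: Fredholm regularity via Proposition~\ref{prop:automatic} and $c_N(u)=0$ for smoothness, homotopy invariance of $u*u=0$ plus positivity of intersections for injectivity of $\ev$, and the splitting $T_{(u,\zeta)}\mM\nice \cong T_\zeta\dot{\Sigma}\oplus\ker\mathbf{D}_u^N$ together with the nowhere-vanishing of nontrivial kernel sections (forced by $c_N(u)=0$, cf.~\cite{Wendl:automatic}) for the derivative of $\ev$ being an isomorphism. The only cosmetic difference is that you argue surjectivity of $d\ev$ and conclude by dimension count, while the paper argues injectivity; these are equivalent here.
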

\begin{proof}
This is a mild generalization of a similar result proved in \cite{HWZ:props3}
for planes with simply covered asymptotic orbits.  We know every $u \in \mM\nice$ is Fredholm
regular by Prop.~\ref{prop:automatic}, and $c_N(u)=0$ due to \eqref{cneq}
and Lemma~\ref{lemma:neIndex}.  It follows that $\mM\nice$ is
smooth and has dimension $\ind{u}+ 2 = 4$, and since $u*u \le 0$ 
(which becomes $u*u = 0$ when $c_N(u)=0$), invariance of the
intersection number implies that no two curves in $\mM\nice$ can intersect,
hence $\ev : \mM\nice \to \overline{W}$ is injective.  To see that
it is also an immersion, observe that for a given curve
$u_0 : \dot{\Sigma} \to \overline{W}$ and marked point $\zeta_0 \in \dot{\Sigma}$
with the pair $(u_0,\zeta_0)$ representing an element of~$\mM\nice$, the
tangent space $T_{(u_0,\zeta_0)}\mM\nice$ is naturally
identified with the direct sum of $T_{\zeta_0} \dot{\Sigma}$ and the
kernel of the linearized Cauchy-Riemann operator acting
on the normal bundle of~$u_0$.  The condition $c_N(u_0)=0$ then implies via 
\cite{Wendl:automatic}*{Equation~(2.7)} that sections in this kernel are
nowhere zero, hence the derivative of the evaluation map $\ev(u,\zeta) =
u(\zeta)$ at $(u_0,\zeta_0)$ is injective.
\end{proof}

\begin{prop}
\label{prop:ne}
Suppose $\overline{W}$ is a symplectization $(\RR \times M,d(e^r\alpha))$ and
$J \in \jJ(\alpha)$.  Then for any nicely embedded $J$-holomorphic curve
$u = (u_\RR,u_M) : \dot{\Sigma} \to \RR \times M$ that is not a trivial
cylinder, the map $u_M : \dot{\Sigma} \to M$ is embedded.
\end{prop}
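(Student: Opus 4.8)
The plan is to read off the numerical consequences of Definition~\ref{ne}, conclude that $u_M$ is an immersion with good asymptotic behaviour, and then force injectivity of $u_M$ by intersecting $u$ with its $\RR$-translates.

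Since $u$ is somewhere injective and not a trivial cylinder, it is not a cover of one, so $c_N(u)$, $\windpi(u)$ and $d_0(u)$ are all defined, and \eqref{cnineq} gives $c_N(u)\ge\windpi(u)\ge 0$ together with $c_N(u)\ge d_0(u)\ge 0$. Feeding the two defining conditions $\dtotal(u)=0$ and $u*u\le 0$ into the adjunction formula \eqref{eqn:adjunction} and using that $\bar{\sigma}(u)-\#\Gamma\ge 0$, I obtain
\[
0 \;\ge\; u*u \;=\; c_N(u) + \big[\,\bar{\sigma}(u)-\#\Gamma\,\big] \;\ge\; 0 ,
\]
so every term on the right vanishes; in particular $c_N(u)=0$ and $u*u=0$. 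By \eqref{eqn:windpiRelation} (together with the nonnegativity above), $c_N(u)=0$ forces $\windpi(u)=d_0(u)=0$, and the characterization of $\windpi$ recalled before the statement then says precisely that $u_M:\dot{\Sigma}\to M$ is an immersion transverse to $R_\alpha$. Moreover $\dtotal(u)=0$ implies $\delta(u)=0$, so $u$ itself is embedded as a map into $\RR\times M$, while $d_0(u)=0$ guarantees, by the standard asymptotic analysis at the punctures, that $u_M$ will be an embedding as soon as we know it is injective.

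So suppose $u_M(z_1)=u_M(z_2)$ with $z_1\ne z_2$. Embeddedness of $u$ forces $u_\RR(z_1)\ne u_\RR(z_2)$, hence $c:=u_\RR(z_2)-u_\RR(z_1)\ne 0$. The translate $u^c:=(u_\RR+c,\,u_M)$ is again $J$-holomorphic since $J$ is $\RR$-invariant, and it is asymptotically cylindrical with the same asymptotic orbits and relative homology class as $u$; homotopy invariance of the intersection product therefore gives $u*u^c=u*u=0$. On the other hand $u^c(z_1)=(u_\RR(z_1)+c,\,u_M(z_1))=(u_\RR(z_2),\,u_M(z_2))=u(z_2)$, so $u$ and $u^c$ pass through a common point; if their images were non-identical this would already contradict $u*u^c\ge u\cdot u^c\ge 1$.

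The only remaining possibility, which I expect to be the one point requiring a little care, is $u(\dot{\Sigma})=u^c(\dot{\Sigma})$. Then the image is invariant under translation by every integer multiple of $c$, and in particular is unbounded below in the $\RR$-coordinate, so $u$ has at least one negative puncture. For $r\to-\infty$ the (embedded) slice $u(\dot{\Sigma})\cap(\{r\}\times M)$ converges in $\mc{C}^0$ to the union of the images of the negative asymptotic orbits, while invariance under $c\ZZ$ forces this slice to be independent of $r$; since the domain is connected, $u(\dot{\Sigma})$ then lies inside a single trivial cylinder, and being somewhere injective with two-dimensional image it must equal that cylinder, contrary to hypothesis. (Equivalently, one may write $u^c=u\circ\phi$ for a biholomorphism $\phi$ of the punctured sphere with $u_\RR\circ\phi=u_\RR+c$, and derive the same conclusion from the structure of the automorphisms of $S^2$ and the asymptotics of $u_\RR$.) This contradiction shows $u_M$ is injective, hence an embedding, as claimed.
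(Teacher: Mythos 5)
Your proposal is correct and follows essentially the same route as the paper: first extracting $c_N(u)=\operatorname{wind}_\pi(u)=0$ from the nicely embedded conditions so that $u_M$ is an immersion transverse to the Reeb field, and then ruling out double points of $u_M$ by pairing $u$ with its $\RR$-translates via homotopy invariance of the $*$-product and positivity of intersections. Your treatment of the identical-image case (translation invariance plus asymptotics forcing $u$ into a trivial cylinder) is just a slightly more explicit version of the paper's own concluding argument.
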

\begin{proof}
Since $c_N(u) \le 0$ by Lemma~\ref{lemma:neIndex},
$\windpi(u) = 0$ due to \eqref{eqn:windpiRelation} and $u_M$ is
therefore immersed and transverse to the Reeb vector field.  To show that
$u_M$ is injective, observe that any double point $u_M(z_1) = u_M(z_2)$
can be interpreted as an intersection of $u$ with one of its $\RR$-translations
$u^c := (u_\RR + c,u_M)$ for some $c \in \RR$, and $c$ must be nonzero
since $\dtotal(u)=0$ implies that $u$ itself is embedded.  By homotopy
invariance of the intersection product, $u * u = u * u^c \le 0$, so such an
intersection is possible only if $u$ and $u^c$ are the same curve up to
parametrization.  But this would imply that $u$ is also equivalent to
$u^{kc}$ for every $k \in \NN$, so taking $k \to \infty$, we conclude from 
the asymptotically cylindrical behaviour of $u$ that its image lies in an
arbitrarily small neighbourhood of a collection of trivial cylinders.
This can only happen if $u$ itself is a trivial cylinder, so we have a
contradiction.
\end{proof}

\begin{lem}
\label{lemma:selfLinking}
Under the assumptions of Prop.~\ref{prop:ne}, suppose
$u = (u_\RR,u_M) : \CC \to \RR \times M$ is a nicely embedded plane
asymptotic to a simply covered orbit~$\gamma$ and $\ind{u} \in \{1,2\}$.
Then if $\dD \subset M$ denotes the Seifert surface with interior $u_M(\CC)$, we have
$$
\muCZ(\gamma;\dD) = \begin{cases}
2 & \text{ if $\ind{u}=1$,}\\
3 & \text{ if $\ind{u}=2$,}
\end{cases}
$$
and in both cases $\selflinking(\gamma ; \dD) = -1$.
\end{lem}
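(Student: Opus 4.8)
The plan is to run all the index bookkeeping in the distinguished trivialization $\Phi_\dD$ of $\gamma^*\xi$ that extends to a unitary trivialization of $\xi$ over the embedded disk $\dD$, so that $\cz{\gamma}{\Phi_\dD} = \muCZ(\gamma;\dD)$ by definition. Two preliminary points I would record first: the single puncture of $u$ is necessarily positive, since a nonconstant finite-energy plane in a symplectization cannot have a negative puncture; and the closure of $u_M(\CC)$ is genuinely an embedded disk $\dD$ with $\p\dD = \gamma$, which follows from $u_M$ being an embedding (Proposition~\ref{prop:ne}) together with $\gamma$ being simply covered, so that the end of $u_M$ winds exactly once around~$\gamma$.

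The heart of the matter is to show $c_N(u)=0$. Since the domain has genus $0$ and $\#\Gamma=1$, formula~\eqref{cneq} becomes $2c_N(u) = \ind{u} - 2 + \#\Gamma_0$ with $\#\Gamma_0 \in \{0,1\}$; as the left-hand side is even and $\ind{u}\in\{1,2\}$, this forces $\#\Gamma_0 = \ind{u}-1$ and hence $c_N(u)=0$ (in harmony with $c_N(u)\le 0$ from Lemma~\ref{lemma:neIndex}). I would then use \eqref{eqn:windpiRelation} to conclude $\windpi(u) = d_0(u) = 0$, so that the asymptotic defect at the positive puncture $z$ vanishes; concretely this says $\wind^{\Phi_\dD}(e_1(u)) = \op{wind}_\i^{\Phi_\dD}(u;z) = \a_-^{\Phi_\dD}(\gamma)$, with $e_1(u)$ the eigenfunction of \eqref{eqn:asympFormula}. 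I also need the relative Chern number in this trivialization: since $\overline{W}$ is a symplectization, $u^*T\overline{W}$ is the direct sum of a canonically trivial complex line (spanned by $\p_r$ and $R_\alpha$) and $u_M^*\xi$, and because $\Phi_\dD$ extends over $\dD$ its $u_M$-pullback trivializes $u_M^*\xi$ over all of $\CC$, whence $c_1^{\Phi_\dD}(u^*T\overline{W}) = c_1^{\Phi_\dD}(u_M^*\xi) = 0$. Feeding this together with $\chi(\CC)=1$ into the definition of $c_N(u)$ then yields $0 = -1 + \a_-^{\Phi_\dD}(\gamma)$, i.e.~$\a_-^{\Phi_\dD}(\gamma)=1$.

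With these in hand the two formulas drop out. The index formula~\eqref{eqn:index}, with $n=2$, $\chi(\CC)=1$, one positive puncture, and $c_1^{\Phi_\dD}(u^*T\overline{W})=0$, reduces to $\ind{u} = -1 + \muCZ(\gamma;\dD)$, so $\muCZ(\gamma;\dD) = \ind{u}+1$, which is $2$ when $\ind{u}=1$ and $3$ when $\ind{u}=2$. For the self-linking number I would invoke \eqref{selflinking}: the canonical framing $X_\dD$ extends to a trivialization of $\xi$ over $\dD$, hence is homotopic to $\Phi_\dD$ along $\gamma$ and has $\wind^{\Phi_\dD}(X_\dD)=0$, while $\wind^{\Phi_\dD}(e_1(u)) = \a_-^{\Phi_\dD}(\gamma) = 1$, so
$$
\selflinking(\gamma;\dD) = \wind(X_\dD, e_1(u)) = \wind^{\Phi_\dD}(X_\dD) - \wind^{\Phi_\dD}(e_1(u)) = -1.
$$

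None of this is deep: the whole thing is bookkeeping, and the only places where care is genuinely required are in keeping the trivialization conventions straight — above all the vanishing of $c_1^{\Phi_\dD}(u^*T\overline{W})$ and the sign in passing between $\wind(X_\dD,e_1(u))$ and winding numbers measured in the fixed trivialization $\Phi_\dD$ — together with the verification that the closure of $u_M(\CC)$ really is an embedded disk, which is the one spot where simple coveredness of $\gamma$ is used.
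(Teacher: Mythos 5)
Your proposal is correct and follows essentially the same route as the paper: work in the trivialization of $\gamma^*\xi$ extending over $\dD$ so the relative $c_1$ term vanishes, read off $\muCZ(\gamma;\dD)=\ind{u}+1$ from \eqref{eqn:index}, use $c_N(u)\le 0$ together with \eqref{cnineq}/\eqref{eqn:windpiRelation} to kill the asymptotic defect so that $\wind^\Phi(e_1(u))=\a_-^\Phi(\gamma)=1$, and conclude via \eqref{selflinking}. The only cosmetic difference is that you extract $\a_-^\Phi(\gamma)=1$ from the definition of $c_N$ (with $c_N(u)=0$ forced by parity in \eqref{cneq}), whereas the paper gets it from \eqref{eqn:CZwinding} and \eqref{eqn:parity}; these are equivalent pieces of bookkeeping.
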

\begin{proof}
If $\Phi$ is the trivialization of $\gamma^*\xi$ that extends over~$\dD$,
then the relative $c_1$ term in the index formula vanishes and gives the stated
relation between $\ind{u}$ and $\muCZ^\Phi(\gamma)$.  By \eqref{eqn:CZwinding}
and \eqref{eqn:parity},
this implies $\a^\Phi_-(\gamma) = 1$.  Moreover, $c_N(u) \le 0$ by
Lemma~\ref{lemma:neIndex}, thus \eqref{cnineq} implies that $u$ has zero 
asymptotic defect, so the nonzero eigenfunction $e_1(u)$ appearing in the
asymptotic formula \eqref{eqn:asympFormula} satisfies
$$
\wind^\Phi(e_1(u)) = \a^\Phi_-(\gamma) = 1.
$$
Now by \eqref{selflinking},
$$
\selflinking(\gamma;\dD) = -\wind^\Phi(e_1(u)) = -1.
$$
\end{proof}

\section{Seed curves in the positive end}
\label{sec:seed}

In this section we describe the seed curves that will generate the moduli
spaces required for proving Theorems~\ref{thm:main}, \ref{thm:nonprime}
and~\ref{thm:allDimensions}.

\subsection{The standard sphere}
\label{sec:stdSphere}

The following construction is for the proofs of Theorems~\ref{thm:main}
and~\ref{thm:allDimensions}.

Regarding $S^{2n-1}$ as the unit sphere in $\CC^n$, fix the standard contact 
form $\alpha\std$ described in Example~\ref{ex:stdSphere}, along with the 
unique admissible complex structure $J\std \in \jJ(\alpha\std)$ on
$\RR \times S^{2n-1}$ that 
restricts to $\xi\std \subset T S^{2n-1} \subset \CC^n$ as the standard
complex structure~$i$.  Recall that the diffeomorphism
\begin{equation}
\label{eqn:cyl}
(\RR \times S^{2n-1},J\std) \to (\CC^n \setminus \{0\},i) : (r,x) \mapsto e^{2r} x
\end{equation}
is then biholomorphic, so we can regard holomorphic curves in 
$\CC^n \setminus \{0\}$ as $J\std$-holomorphic curves in the 
symplectization of $(S^{2n-1},\xi\std)$.  With this understood, define for
each $w \in \CC^{n-1} \setminus \{0\}$ the holomorphic plane
$$
u_w : (\CC,i) \to (\CC^n \setminus \{0\},i) : z \mapsto (z,w).
$$
As a curve in $\RR \times S^{2n-1}$, each $u_w$ is asymptotic at $\infty$ 
to the same closed Reeb orbit in $(S^{2n-1},\alpha\std)$, namely
$$
\gamma_\infty : S^1 \to S^{2n-1} : t \mapsto (e^{2\pi i t},0,\ldots,0).
$$
This orbit belongs to a $(2n-2)$-dimensional Morse-Bott family of closed
embedded Reeb orbits with period~$\pi$, which foliate $S^{2n-1}$;
indeed, they form the fibres of the Hopf fibration $S^1 \hookrightarrow 
S^{2n-1} \to \CC P^{n-1}$.

\begin{lem}
\label{lemma:uwIndex}
For each $w \in \CC^{n-1} \setminus \{0\}$, $\ind{u_w} = 2n - 2$.
\end{lem}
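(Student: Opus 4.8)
The plan is to apply the Morse-Bott index formula \eqref{eqn:indexMB} directly. The domain is $\dot\Sigma = \CC$, so $\chi(\dot\Sigma) = 1$; there is a single positive puncture at $\infty$ asymptotic to the degenerate (Morse-Bott) orbit $\gamma_\infty$, and there are no negative punctures. Hence, for any unitary trivialization $\Phi$ of $\gamma_\infty^*\xi\std$,
\[
\ind{u_w} = (n-3) + 2 c_1^\Phi(u_w^*T\overline{W}) + \cz{\gamma_\infty + \epsilon}{\Phi},
\]
and the task reduces to evaluating the two terms on the right for a convenient~$\Phi$. The natural choice is available because $\gamma_\infty$ traces the first coordinate circle: the contact hyperplane $\xi\std|_{\gamma_\infty(t)} = (\CC\gamma_\infty(t))^{\perp}$ equals the $t$-independent subspace $\{0\}\times\CC^{n-1}\subset\CC^n$, so there is a canonical constant trivialization $\Phi_0$.

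First I would compute $\cz{\gamma_\infty + \epsilon}{\Phi_0}$. In the trivialization $\Phi_0$ one reads off that the asymptotic operator $A_{\gamma_\infty}$ becomes a scalar operator of the form $\eta\mapsto -i\eta' - 2\pi\eta$ on $\mc{C}^\infty(S^1,\CC^{n-1})$, whose spectrum is $\{2\pi(k-1)\}_{k\in\ZZ}$ with the $k$-th eigenspace spanned by the winding-$k$ loops $t\mapsto e^{2\pi i k t}v$ (in particular $0$ is an eigenvalue with winding-$1$ eigenfunctions, consistent with the fact that the Morse-Bott family is the Hopf fibration). Perturbing by $+\epsilon$, on each of the $n-1$ complex line summands the largest negative eigenvalue has winding $0$ and the smallest positive eigenvalue has winding $1$, so that summand contributes Conley-Zehnder index $1$; by the additivity \eqref{eqn:CZhigherDim} we obtain $\cz{\gamma_\infty + \epsilon}{\Phi_0} = n-1$.

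Next I would compute $c_1^{\Phi_0}(u_w^*T\overline{W}) = 1$. Using the biholomorphism \eqref{eqn:cyl}, $u_w$ becomes the affine complex line $z\mapsto (z,w)$ in $\CC^n\setminus\{0\}$, so $u_w^*T(\CC^n\setminus\{0\})$ is globally trivialized by the constant frame $e_1,\dots,e_n$, with vanishing relative first Chern number. The asymptotic trivialization entering the index formula is built from $\Phi_0$ on the $\xi\std$-summand together with the canonical trivialization of $\RR\partial_r\oplus\RR R_{\alpha\std}$, i.e.\ the complex frame spanned by $\partial_r$; along the trivial cylinder over $\gamma_\infty$, which in these coordinates is $(s,t)\mapsto(e^{2\pi(s+it)},0,\dots,0)$, this frame is a positive multiple of $e^{2\pi i t}e_1$, hence winds once positively relative to $e_1$, while on the $\xi\std$-summand $\Phi_0$ agrees with the constant frame. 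The two trivializations therefore differ by total winding $1$, giving $c_1^{\Phi_0}(u_w^*T\overline{W}) = 1$. Substituting both values, $\ind{u_w} = (n-3) + 2\cdot 1 + (n-1) = 2n-2$.

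The routine but fiddly part is the trivialization bookkeeping in the $c_1$ step: one must correctly relate the canonical complex trivialization of $\RR\partial_r\oplus\RR R_{\alpha\std}$ near the puncture to the ambient constant frame of $\CC^n$ and keep track of the winding with the right sign, since an error there propagates directly into the index. A convenient consistency check is the case $n=2$, where the $u_w$ are the classical index-$2$ holomorphic planes in $\RR\times S^3$, and indeed the formula gives $2n-2 = 2$.
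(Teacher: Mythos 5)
Your proof is correct and follows essentially the same route as the paper: plug into the Morse--Bott index formula \eqref{eqn:indexMB} with the constant trivialization of $\xi\std$ along $\gamma_\infty$, get $\cz{\gamma_\infty+\epsilon}{\Phi} = n-1$ from the splitting of the asymptotic operator into line summands (you compute the full spectrum, the paper only identifies the winding-$1$ kernel and invokes Proposition~\ref{spectral}), and get $c_1^\Phi(u_w^*T\overline{W}) = 1$. The only cosmetic difference is the $c_1$ bookkeeping: the paper splits off the normal bundle and uses $c_1^\Phi(T\CC)=\chi(\CC)$ with $c_1^\Phi(N_{u_w})=0$, whereas you compare the asymptotic frame $\partial_r\sim e^{2\pi i t}e_1$ with the global constant frame of $\CC^n$ and read off the same winding~$1$.
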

\begin{proof}
Abbreviate $\overline{W} = \RR \times S^{2n-1}$.  The fibres of the contact
bundle along $\gamma_\infty$ are naturally identified with
$\{0\} \oplus \CC^{n-1} \subset
T S^{2n-1} \subset \CC^n$, so $\gamma_\infty^*\xi\std$ has a natural 
trivialization, which we will denote by~$\Phi$, and it extends to a natural 
trivialization of the normal bundle $N_{u_w} \to \CC$ of~$u_w$.  The latter 
implies $c_1^\Phi(N_{u_w}) = 0$, so writing $u_w^*T\overline{W} = T\CC \oplus N_{u_w}$
gives
$$
c_1^\Phi(u_w^*T\overline{W}) = \chi(\CC) + c_1^\Phi(N_{u_w}) = 1.
$$
To compute $\cz{\gamma_\infty + \epsilon}{\Phi}$, we observe that the
asymptotic operator $A_{\gamma_\infty}$ splits with respect to the obvious
decomposition
$$
\gamma_\infty^*\xi\std = S^1 \times \CC^{n-1} = L_2 \oplus \ldots \oplus L_n
$$
into trivial complex line bundles, so we can write
$A_{\gamma_\infty} = A_2 \oplus \ldots \oplus A_m$,
and the trivialization $\Phi$ is now also a direct sum
$\Phi_2 \oplus \ldots \oplus \Phi_m$ of trivializations of these line bundles.
The kernel of $A_{\gamma_\infty}$ is a complex $(n-1)$-dimensional space of
sections along $\gamma_\infty$ that point in the directions of other
Hopf fibres, and its intersection with each of the summands $L_j$ for
$j=2,\ldots,n$ is a complex $1$-dimensional space spanned by a section
of the form
$$
\eta_j : S^1 \to L_j : t \mapsto (0,\ldots,0,e^{2 \pi it},0,\ldots,0).
$$
We thus have $\wind^{\Phi_j}(\eta_j) = 1$, and $A_j + \epsilon$ therefore
has a real $2$-dimensional eigenspace with the smallest positive eigenvalue
$\epsilon$ and winding~$1$.  By Proposition~\ref{spectral}, the largest negative
eigenvalue $A_j + \epsilon$ must then have winding~$0$, so by
\eqref{eqn:CZwinding},
$$
\cz{A_j + \epsilon}{\Phi_j} = 1,
$$
and \eqref{eqn:CZhigherDim} then implies
$$
\cz{\gamma_\infty + \epsilon}{\Phi} = \sum_{j=2}^n \cz{A_j + \epsilon}{\Phi_j} =
n-1.
$$
Finally, \eqref{eqn:indexMB} gives
\begin{equation*}
\begin{split}
\ind{u_w} &= (n-3) \chi(\CC) + 2 c_1^\Phi(u_w^*T\overline{W}) +
\cz{\gamma_\infty + \epsilon}{\Phi} \\ &= (n-3) + 2 + (n-1)
= 2n - 2.
\end{split}
\end{equation*}
\end{proof}

\begin{lem}
\label{lemma:uwReg}
The $J\std$-holomorphic planes $u_w$ are all Fredholm regular.
\end{lem}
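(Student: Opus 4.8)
The plan is to show that the linearized Cauchy--Riemann operator at $u_w$, set up for the moduli problem with the fixed (Morse--Bott) asymptotic orbit $\gamma_\infty$, is surjective. Using the biholomorphism \eqref{eqn:cyl}, I would work throughout in $(\CC^n\setminus\{0\},i)$, where $i$ is integrable and $u_w$ is the properly embedded affine complex line $z\mapsto(z,w)$, disjoint from the origin. Since $u_w$ is embedded, hence immersed, the linearized operator splits along $u_w^*T(\CC^n\setminus\{0\})=T\CC\oplus N_{u_w}$ into a tangential part and a normal part $\mathbf{D}^N_{u_w}$, and by the standard argument for immersed curves (see \cite{Wendl:automatic}) Fredholm regularity of $u_w$, as an element of $\mM(J\std,\gamma_\infty,\emptyset)$ with the asymptotic orbit constrained to $\gamma_\infty$, is equivalent to surjectivity of $\mathbf{D}^N_{u_w}$: the tangential deformations together with the action of the (four-dimensional) reparametrization group of the domain are automatically unobstructed, and correspondingly $\ind{\mathbf{D}^N_{u_w}}=\ind{u_w}=2n-2$.

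Next I would exploit the very simple shape of $\mathbf{D}^N_{u_w}$. Because $i$ is integrable and $u_w$ is a complex submanifold, $\mathbf{D}^N_{u_w}$ is, with respect to the holomorphic connection, the Dolbeault operator of the holomorphic normal bundle of $\CC\times\{w\}$ in $\CC^n$, namely the trivial bundle $\CC\times\CC^{n-1}\to\CC$. Moreover it respects the splitting $N_{u_w}=L_2\oplus\cdots\oplus L_n$ into trivial line bundles from the proof of Lemma~\ref{lemma:uwIndex}, so $\mathbf{D}^N_{u_w}=\mathbf{D}_2\oplus\cdots\oplus\mathbf{D}_n$, where each $\mathbf{D}_j$ is a Cauchy--Riemann operator on a trivial complex line bundle over $\CC$ whose single positive puncture is asymptotic to $\gamma_\infty$ with perturbed Conley--Zehnder index $\cz{A_j+\epsilon}{\Phi_j}=1$ (as computed there); hence $\ind{\mathbf{D}_j}=2$.

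It then remains to see that each $\mathbf{D}_j$ is surjective. One clean way is to invoke automatic transversality for Cauchy--Riemann operators on line bundles: a line-bundle operator over the once-punctured sphere with index $\ge 1$ has vanishing cokernel (for $n=2$ this is essentially contained in \cite{HWZ:props3}). Alternatively one argues by hand: in the linear trivialization coming from the $\CC^n$-coordinates, $\ker\mathbf{D}_j$ consists of entire functions $\CC\to\CC$ lying in the weighted Sobolev completion associated with $\gamma_\infty$, and the exponential weight---chosen small and below the first nonzero eigenvalue of $A_{\gamma_\infty}$, so as to encode the fixed-orbit constraint behind \eqref{eqn:indexMB}---admits only the constant functions, every nonconstant polynomial growing too fast at the puncture; since the constants are exactly the normal components of the honest deformations $\tfrac{d}{d\tau}u_{w+\tau v}\big|_{\tau=0}$, we get $\dim_\RR\ker\mathbf{D}_j=2=\ind{\mathbf{D}_j}$ and therefore $\coker\mathbf{D}_j=0$. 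In either case $\mathbf{D}^N_{u_w}$ is surjective, so $u_w$ is Fredholm regular; as a consistency check, $\ker\mathbf{D}^N_{u_w}$ is precisely the tangent space at $u_w$ to the family $\{u_{w'}\}_{w'\in\CC^{n-1}\setminus\{0\}}$, of the expected dimension $2n-2$.

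The one place requiring care---more bookkeeping than idea---is the functional-analytic model at the degenerate orbit $\gamma_\infty$: one must fix the correct small exponential weight so that $\ind{\mathbf{D}^N_{u_w}}=2n-2$ and the fixed-orbit condition is correctly imposed, and one must keep straight the two natural trivializations of $N_{u_w}$ near the puncture---the one inherited from the linear coordinates on $\CC^n$, in which $\mathbf{D}^N_{u_w}$ is the plain $\bar\partial$-operator and the family deformations are constant sections, versus the one adapted to the Reeb orbit, in which the weighted spaces are defined. These two trivializations differ by a trivialization of nonzero winding, so that the ``constant'' solutions of the former appear in the latter as sections decaying at exactly the largest-negative-eigenvalue rate of $A_{\gamma_\infty}$, which is precisely what places them in the correct weighted space.
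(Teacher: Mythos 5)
Your proposal is correct and its ``by hand'' argument is essentially the paper's proof: reduce regularity to surjectivity of the normal operator via \cite{Wendl:automatic}, identify kernel elements (in the Euclidean trivialization of $N_{u_w}$, using the integrability of $i$) with holomorphic $\CC^{n-1}$-valued functions whose sublinear growth at the puncture forces them to be constant, and match $\dim_\RR\ker = 2n-2 = \ind{\mathbf{D}^N_{u_w}}$ to kill the cokernel. The line-bundle splitting and the alternative appeal to automatic transversality for each summand are harmless variations on the same computation.
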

\begin{proof}
Note that the standard genericity arguments do not apply here since $J\std$
is very far from being generic.  But in this case we can check regularity
explicitly.  Recall that by \cite{Wendl:automatic}*{Theorem~3}, it suffices
to check that the linearized normal operator
$$
\mathbf{D}_{u_w}^N : W^{1,p,\delta}(N_{u_w}) \to L^{p,\delta}(\overline{\Hom}_\CC(T\CC,N_{u_w}))
$$
is surjective, where $\ind{\mathbf{D}_{u_w}^N} = \ind{u_w}$.
Here $p \in (2,\infty)$, and $\delta > 0$ is a small exponential weight,
meaning that if sections $\eta : \CC \to N_{u_w}$ in the domain of
$\mathbf{D}_{u_w}^N$ are written near $\infty$ in cylindrical coordinates
$(s,t) \in [0,\infty) \times S^1$ corresponding to $z = e^{2\pi (s+it)} \in \CC$,
then the section $e^{\delta s} \eta(s,t)$ must be of class $W^{1,p}$
on $[0,\infty) \times S^1$.  This definition also assumes a 
translation-invariant metric on $\RR \times S^{2n-1}$ for computing
$L^p$-norms of sections along~$u_w$.  Note that since $p > 2$, sections of
class $W^{1,p}$ are continuous, and we can therefore assume
\begin{equation}
\label{eqn:expDecay}
\eta(s,t) \to 0 \quad \text{ as } \quad s \to \infty
\end{equation}
for $\eta \in W^{1,p,\delta}(N_{u_w})$.

From a different perspective, however,
$\mathbf{D}_{u_w}^N$ is an extremely simple operator: sections $\eta$ of the 
normal bundle to $u_w : \CC \to \CC^n \setminus \{0\}$ can be identified 
canonically with functions $\tilde{\eta} : \CC \to \CC^{n-1}$ using the obvious
trivialization of $N_{u_w}$, and since $\mathbf{D}_{u_w}^N$ is the
linearization of the standard (and thus already linear) Cauchy-Riemann 
operator~$\dbar$, $\eta \in \ker \mathbf{D}_{u_w}^N$ implies that
$\tilde{\eta}$ is a $\CC^{n-1}$-valued holomorphic function.  Under the
transformation \eqref{eqn:cyl}, the condition \eqref{eqn:expDecay} then
implies
$$
\frac{|\tilde{\eta}(z)|}{|z|} \to 0 \quad \text{ as } \quad z \to \infty,
$$
so the growth of $\tilde{\eta}$ at infinity is strictly smaller than that of an
affine function.  Complex analysis then implies that the singularity
of $\tilde{\eta}$ at $\infty$ is removable, so $\tilde{\eta}$ is constant, proving
$$
\dim_\CC \ker \mathbf{D}_{u_w}^N = n-1.
$$
The real dimension of the kernel of $\mathbf{D}_{u_w}^N$ is thus equal
to its index according to Lemma~\ref{lemma:uwIndex}, so
$\mathbf{D}_{u_w}^N$ has trivial cokernel.
\end{proof}

\begin{lem}
\label{lemma:uwUniqueness}
Up to parametrization, every asymptotically cylindrical $J\std$-holomorphic 
curve in $\RR \times S^{2n-1}$ with a single positive puncture asymptotic 
to $\gamma_\infty$ and arbitrary negative punctures is either the trivial 
cylinder over $\gamma_\infty$ or one of the planes $u_w$.
\end{lem}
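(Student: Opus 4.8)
The plan is to argue as follows. Let $u : \dot{\Sigma} \to \RR \times S^{2n-1}$ be an asymptotically cylindrical $J\std$-holomorphic curve with a single positive puncture asymptotic to $\gamma_\infty$ and arbitrary negative punctures; assume $u$ is not a cover of the trivial cylinder over $\gamma_\infty$. Using the biholomorphism \eqref{eqn:cyl}, translate $u$ into a holomorphic curve $\tilde u : \dot{\Sigma} \to \CC^n \setminus \{0\}$. The key first step is to control the negative punctures: a negative puncture asymptotic to a Reeb orbit $\gamma$ contributes period $T(\gamma)>0$, and Stokes' theorem applied to $d(e^r\alpha\std)$ over $u$ gives that the sum of the periods at the negative punctures is strictly less than the period $\pi$ of $\gamma_\infty$. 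But every closed Reeb orbit of $\alpha\std$ on $S^{2n-1}$ is a (possibly multiply covered) Hopf fibre with period a positive integer multiple of $\pi$, so the period inequality forces $u$ to have \emph{no} negative punctures at all. Hence $\dot{\Sigma} = \Sigma \setminus \{z_0\}$ is a once-punctured sphere, i.e.\ $u$ is a plane, and $\tilde u : \CC \to \CC^n \setminus\{0\}$ is an entire holomorphic map.

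The second step is to identify which entire maps $\CC \to \CC^n$ avoiding the origin can be asymptotic to $\gamma_\infty$. Writing $\tilde u = (\tilde u_1, \dots, \tilde u_n)$, the asymptotic behaviour of $u$ near the puncture (the formula \eqref{eqn:asympFormula} together with the explicit form of $\gamma_\infty(t) = (e^{2\pi i t},0,\dots,0)$ and the fact that under \eqref{eqn:cyl} the cylindrical end corresponds to $|z|\to\infty$ via $z \sim e^{2\pi(s+it)}$) shows that $|\tilde u(z)| / |z| $ is bounded and bounded away from $0$ as $z \to \infty$: more precisely $\tilde u_1(z)/z \to c \ne 0$ and $\tilde u_j(z)/z \to 0$ for $j \ge 2$. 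By complex analysis (removability of the singularity at $\infty$ for functions of sub-linear or linear growth), each $\tilde u_j$ is a polynomial of degree at most $1$, with $\tilde u_1(z) = cz + a_1$ and $\tilde u_j(z) = a_j$ constant for $j \ge 2$. After the biholomorphic reparametrization $z \mapsto (z - a_1)/c$ of the domain $\CC$ we may assume $\tilde u_1(z) = z$, so $\tilde u(z) = (z, a_2, \dots, a_n) = (z, w)$ with $w = (a_2,\dots,a_n) \in \CC^{n-1}$; since $\tilde u$ avoids $0$ we need $w \ne 0$, and this is exactly $u_w$ up to parametrization.

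I expect the main obstacle to be the second step — pinning down, cleanly and without hand-waving, that the asymptotic convergence to $\gamma_\infty$ forces precisely linear growth of $\tilde u_1$ and sub-linear (hence constant) behaviour of the other components. One must quote the relative asymptotic results of \cite{Siefring:asymptotics} (or the older \cites{HWZ:props1,Mora}) carefully enough to get the two-sided bound $0 < \liminf |\tilde u(z)|/|z| \le \limsup |\tilde u(z)|/|z| < \infty$ and the component-wise statement, rather than just an upper bound; the lower bound on $|\tilde u_1|$ is what rules out the degenerate possibility that $\tilde u$ is constant or that $\tilde u_1$ vanishes. The period/Stokes argument in the first step and the Liouville/removable-singularity argument at the end are routine by comparison. (One should also remark that the case where $u$ is a cover of the trivial cylinder is precisely the trivial cylinder over $\gamma_\infty$ itself, since any higher cover would be asymptotic to a multiply covered orbit at the positive puncture, not to the simply covered $\gamma_\infty$.)
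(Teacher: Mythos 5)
Your overall strategy coincides with the paper's: use Stokes' theorem together with the fact that no closed Reeb orbit of $\alpha\std$ has period smaller than that of $\gamma_\infty$ to exclude negative punctures, then transport the curve to $\CC^n\setminus\{0\}$ via \eqref{eqn:cyl} and finish by elementary complex analysis. Two small remarks on that part: Stokes should be applied to $u^*d\alpha\std$ (whose integral is the difference of the asymptotic periods, is pointwise nonnegative, and vanishes identically only for covers of trivial cylinders), not to $d(e^r\alpha\std)$, whose integral is not a difference of periods and in fact diverges at the positive puncture; and the asymptotic input you are worried about is much weaker than Siefring's relative asymptotics --- the $C^0$-convergence to the orbit cylinder built into the definition of ``asymptotically cylindrical'' already gives boundedness of the components $\tilde u_2,\dots,\tilde u_n$ and the fact that $\tilde u_1$ grows like a nonzero constant times $e^{2\pi(s+it)}$, which is all the complex analysis requires.

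The genuine gap is the sentence ``Hence $\dot{\Sigma} = \Sigma \setminus \{z_0\}$ is a once-punctured sphere.'' Nothing you have established at that point controls the genus of $\Sigma$: the lemma concerns arbitrary asymptotically cylindrical curves, not elements of the genus-zero moduli spaces, and ruling out negative punctures says nothing about the genus of the domain. Your subsequent argument (entire functions on $\CC$ of linear or sublinear growth) therefore only treats the genus-zero case. The repair is exactly the complex analysis you already use, carried out on the closed surface instead of on $\CC$: the bounded components $\tilde u_2,\dots,\tilde u_n$ extend holomorphically over the unique puncture and are then holomorphic functions on the closed surface $\Sigma$, hence constant in any genus, while $\tilde u_1$ has a pole of order one at the puncture (because $\gamma_\infty$ is simply covered), so it extends to a holomorphic map $\Sigma \to \CC P^1$ of degree one; a degree-one holomorphic map of closed Riemann surfaces is a biholomorphism, which is what forces $\Sigma \cong S^2$ and, after reparametrization, exhibits $u$ as one of the planes $u_w$ (with $w \ne 0$ since the image avoids the origin). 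This is precisely how the paper's proof proceeds; with this insertion, and with your closing remark disposing of covers of the trivial cylinder, your argument is complete.
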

\begin{proof}
Since no Reeb orbit in $(S^3,\alpha\std)$ has period smaller than that
of~$\gamma_\infty$, any curve $u : \dot{\Sigma} \to \RR \times S^{2n-1}$
of the specified type with a nonempty set of negative punctures would satisfy 
$\int_{\dot{\Sigma}} u^*\alpha\std = 0$ by Stokes' theorem, and since
the positive asymptotic orbit is simple, $u$ in this case could only be
a trivial cylinder.  If $u$ has no negative punctures, then it defines via
\eqref{eqn:cyl} a proper holomorphic map $u = (u_1,\ldots,u_n) : 
\dot{\Sigma} \to \CC^n$ such that $u_2,\ldots,u_n : \dot{\Sigma} \to \CC$ 
are all bounded holomorphic functions that decay to $0$ at the unique puncture, 
so these all extend to holomorphic functions on the compact domain
$\Sigma$ and are therefore constant.  The remaining function $u_1 : \dot{\Sigma}
\to \CC$ has a pole of order~$1$ at its unique puncture, thus it extends to
a nonconstant holomorphic map $\Sigma \to S^2$ of degree~$1$, implying
that $\Sigma = S^2$ and, after a suitable reparametrization, $\dot{\Sigma} = \CC$
with $u_1 : \CC \to \CC$ an affine map.
\end{proof}

\begin{lem}
\label{lemma:uwNe}
In the case $\dim M = 3$, the planes $u_w$ satisfy $c_N(u_w) = 0$ and are
nicely embedded.
\end{lem}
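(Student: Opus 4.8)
The plan is to verify the two assertions in turn: that $c_N(u_w) = 0$, and that $u_w$ meets the three requirements of Definition~\ref{ne}, namely somewhere injectivity, $\dtotal(u_w) = 0$, and $u_w * u_w \le 0$. None of these should be difficult, since all the necessary numerical input has already been assembled in the proofs of Lemmas~\ref{lemma:uwIndex} and~\ref{lemma:uwReg}; the one point demanding a little care is that $\gamma_\infty$ is degenerate, so I must work throughout with the perturbed Conley--Zehnder index and perturbed winding numbers and invoke the Morse--Bott versions of \eqref{cneq}, Lemma~\ref{lemma:dinfty} and the adjunction formula \eqref{eqn:adjunction}, as asserted in \S\ref{sec:curves}.

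First I would compute $c_N(u_w)$. Since $\dim M = 3$ means $n = 2$, Lemma~\ref{lemma:uwIndex} gives $\ind{u_w} = 2$, and the computation in its proof shows $\cz{\gamma_\infty + \epsilon}{\Phi} = n - 1 = 1$, which is odd, so $\gamma_\infty$ contributes nothing to the set $\Gamma_0$ of even-parity punctures. Then \eqref{cneq} reads $2 c_N(u_w) = \ind{u_w} - 2 + 2g + \#\Gamma_0 = 0$ (with $g = 0$), hence $c_N(u_w) = 0$. Equivalently, one could read $c_N(u_w) = c_1^\Phi(u_w^*T\overline{W}) - \chi(\CC) + \a_-^\Phi(\gamma_\infty + \epsilon) = 1 - 1 + 0$ directly from the trivialization $\Phi$ of $N_{u_w}$ used in the proof of Lemma~\ref{lemma:uwIndex}.

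Next, somewhere injectivity and $\dtotal(u_w) = 0$. The map $z \mapsto (z,w)$ is an embedding of $\CC$ into $\CC^n \setminus \{0\}$ with nowhere-vanishing differential, so $u_w$ is embedded, hence somewhere injective, with $\delta(u_w) = 0$. Its unique asymptotic orbit $\gamma_\infty$ is an embedded, and therefore simply covered, Reeb orbit, so Lemma~\ref{lemma:dinfty} applies and yields both $\dinfty(u_w) = 0$ and $\bar{\sigma}(u_w) - \#\Gamma = 0$. Thus $\dtotal(u_w) = \delta(u_w) + \dinfty(u_w) = 0$.

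Finally, the adjunction formula \eqref{eqn:adjunction} now gives
$$
u_w * u_w = 2\dtotal(u_w) + c_N(u_w) + \left[ \bar{\sigma}(u_w) - \#\Gamma \right] = 0,
$$
so in particular $u_w * u_w \le 0$, and all three conditions of Definition~\ref{ne} hold, proving $u_w$ is nicely embedded. The main obstacle, such as it is, is purely bookkeeping: keeping the perturbed invariants straight for the degenerate orbit $\gamma_\infty$ and confirming that the quoted formulas remain valid in the Morse--Bott setting.
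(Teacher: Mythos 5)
Your proof is correct and follows essentially the same route as the paper: deduce $c_N(u_w)=0$ from \eqref{cneq} using $\ind{u_w}=2$ and the odd perturbed Conley--Zehnder index, get $\dtotal(u_w)=\bar{\sigma}(u_w)-\#\Gamma=0$ from embeddedness and Lemma~\ref{lemma:dinfty} since $\gamma_\infty$ is simple, and conclude $u_w * u_w = 0$ via the adjunction formula \eqref{eqn:adjunction}. Your extra care about the Morse--Bott bookkeeping (perturbed indices and windings) is consistent with what the paper implicitly uses and is a fine addition, but not a different argument.
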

\begin{proof}
We saw in the proof of Lemma~\ref{lemma:uwIndex} that 
$\cz{\gamma_\infty + \epsilon}{\Phi}$ is odd and $\ind{u_w} = 2$, so
\eqref{cneq} implies $c_N(u_w) = 0$.  Since $u_w$ is embedded and has only
a single simple asymptotic orbit, $\dtotal(u_w) = \bar{\sigma}(u_w) - 1 = 0$
by Lemma~\ref{lemma:dinfty}.  Thus by Siefring's adjunction formula \eqref{eqn:adjunction},
$$
u_w * u_w = 2 \dtotal(u_w) + c_N(u_w) + \left[ \bar{\sigma}(u_w) - 1 \right] = 0.
$$
\end{proof}

\subsection{Reducible tight contact $3$-manifolds}
\label{sec:reducible}

We now describe the seed curves for the first case of Theorem~\ref{thm:nonprime}.
Assume $M$ is a reducible closed oriented $3$-manifold with a
contact structure $\xi$; we are free to assume $\xi$ is tight since the
overtwisted case will be dealt with separately in \S\ref{sec:OTcase} below.
The reducibility hypothesis means that $M$ is either $S^1 \times S^2$
or a nontrivial connected sum $M_1 \# M_2$, and in the latter case,
tightness of $\xi$ implies via Colin's connected sum theorem \cite{Colin:prime}
that $(M,\xi) = (M_1,\xi_1) \# (M_2,\xi_2)$ for some tight contact
structures $\xi_i$ on $M_i$, $i=1,2$.  The case $S^1 \times S^2$ can also
be understood via connected sums since the unique tight contact structure
on $S^1 \times S^2$ is the one that is obtained from $(S^3,\xi\std)$ by
attaching two disjoint neighborhoods in $S^3$ to each other via a
self-connected sum.  In either case, $(M,\xi)$ contains a special embedded
$2$-sphere
$$
S^2 \cong S \subset M,
$$
the belt sphere of the connected sum, and $\xi$ takes a certain standard form
in a neighbourhood of~$S$.  Moreover, $S$ represents a nontrivial
element of $\pi_2(M)$: this follows from the Poincar\'e conjecture after
applying \cite{Hatcher:3manifolds}*{Prop.~3.1} to deduce that
$[S] \in \pi_2(M)$ can be trivial only if $S$ bounds a contractible 
submanifold in~$M$.

The desired $J$-holomorphic curves in $\RR \times M$ can now be borrowed 
wholesale from a construction of Fish and Siefring \cite{FishSiefring:1}.
Specifically, Theorem~5.1 in their paper provides a nondegenerate
contact form $\alpha_+$ on $(M,\xi)$ and an almost complex structure 
$J_+ \in \jJ(\alpha_+)$, which may be assumed generic outside a
neighborhood of $\RR \times S$, such that there exists
a nondegenerate embedded Reeb orbit 
$$
\gamma_\infty : S^1 \to M
$$
with even Conley-Zehnder index and with image in~$S$.  This orbit
splits $S$ into two hemispheres $S_+$ and $S_-$, and there exists a pair of
nicely embedded and Fredholm regular $J_+$-holomorphic planes
$$
u^\pm = (u^\pm_\RR,u^\pm_M) : \CC \to \RR \times M
$$
with index~$1$, both asymptotic to~$\gamma_\infty$, such that 
$u^\pm_M(\CC)$ is the interior of~$S_\pm$.  They satisfy
$$
c_N(u^\pm) = u^\pm * u^\pm = u^+ * u^- = 0,
$$
and they approach $\gamma_\infty$ ``from opposite sides'' in the sense that
after suitable $\RR$-translations, one can arrange
$$
e_1(u^+) = - e_1(u^-),
$$
where $e_1(u^\pm)$ denotes the nontrivial asymptotic eigenfunction appearing
in the asymptotic formula \eqref{eqn:asympFormula} for~$u^\pm$.

\begin{lem}
\label{lemma:csUniqueness}
Up to parametrization and $\RR$-translation, every asymptotically cylindrical
$J_+$-holomorphic curve in $\RR \times M$ with a single positive puncture
asymptotic to $\gamma_\infty$ and arbitrary negative punctures is either
the trivial cylinder over $\gamma_\infty$ or one of the planes~$u^\pm$.
\end{lem}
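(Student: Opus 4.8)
The plan is to show that any curve $u : \dot\Sigma \to \RR \times M$ of the stated type that is not the trivial cylinder coincides, up to parametrization and $\RR$-translation, with a leaf of the foliation of $\RR \times S$ whose leaves are the trivial cylinder $C_\infty := \RR \times \gamma_\infty$ together with the $\RR$-translates of $u^+$ and of $u^-$. (That these do foliate $\RR \times S$ follows from Prop.~\ref{prop:ne}, which gives that $u^\pm_M$ are embeddings onto the open hemispheres $S_\pm^\circ$, together with the $\RR$-invariance of the configuration.) First I would dispose of the multiply covered case: if $u = v \circ \phi$ with $\deg\phi = k$, then because $u$ has a single positive puncture asymptotic to the \emph{simple} orbit $\gamma_\infty$ and punctures together with their signs pull back correctly under $\phi$, the curve $v$ must also have exactly one positive puncture, asymptotic to $\gamma_\infty$ with local branching degree one; computing $\deg\phi$ as the sum of local degrees over the fibre of that puncture then forces $k = 1$. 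So we may assume $u$ is somewhere injective, and by hypothesis it is not a trivial cylinder.

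The intersection-theoretic heart of the argument is the claim that $u * u^+ \le 0$ and $u * u^- \le 0$. By homotopy invariance of the Siefring pairing these numbers depend only on the asymptotic orbits of $u$ and $u^\pm$ and on relative homology classes, so I would compute them from the data at hand: the identities $u^\pm * u^\pm = u^+ * u^- = 0$; the equalities $c_N(u^\pm) = 0$, which by \eqref{cnineq} force $u^\pm$ to have vanishing asymptotic defect, so that $e_1(u^\pm)$ attains the extremal winding $\a_-^\Phi(\gamma_\infty)$; the even parity of $\gamma_\infty$; the relation $e_1(u^+) = -e_1(u^-)$; and the fact that $S$ is a \emph{sphere} split by $\gamma_\infty$ into the two hemispheres carried by $u^\pm_M$. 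These are exactly the ingredients that render the asymptotic correction terms in Siefring's formula for $u * u^\pm$ nonpositive and reduce the remaining relative intersection number to a homological quantity that vanishes because $S$ is a sphere. Granting this, positivity of intersections shows that, unless $u$ has the same image as $u^+$ (resp.\ $u^-$), it is disjoint from it in the interior; and since the translate $u^{\pm,c} := (u^\pm_\RR + c,\, u^\pm_M)$ satisfies $u * u^{\pm,c} = u * u^\pm$ by homotopy invariance, $u$ is in fact disjoint in the interior from every $\RR$-translate of $u^\pm$. An analogous computation of $u * C_\infty$ shows that $u$ has no interior intersections with $C_\infty$ either.

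It then remains to conclude that $u$ is one of these leaves. Near its positive puncture $u$ is $\mc{C}^0$-close to $C_\infty$, and by \eqref{eqn:asympFormula} the way it approaches $\gamma_\infty$ is governed by $e_1(u)$; combining this with the disjointness just established and the fact that the $u^{\pm,c}$ together with $C_\infty$ exhaust a full neighbourhood of $\RR \times \gamma_\infty$ inside $\RR \times S$, a standard argument from the theory of finite energy foliations (cf.~\cite{HWZ:foliations}, \cite{Wendl:automatic}) shows that the image of $u$ near the puncture, and hence (by unique continuation, $u$ being $J_+$-holomorphic) everywhere, coincides with the image of a single leaf. Since $u$ is somewhere injective it is then a reparametrization of that leaf, so $u$ equals $C_\infty$, $u^+$ or $u^-$ up to parametrization and $\RR$-translation; and as $u$ is not the trivial cylinder, $u$ equals $u^+$ or $u^-$. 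The main obstacle is the sign estimate $u * u^\pm \le 0$: carrying it out requires Siefring's relative asymptotic analysis and careful bookkeeping of the winding numbers at the shared asymptotic orbit $\gamma_\infty$ — in particular keeping track of the degenerate leaf $C_\infty$, which, unlike a generic leaf of the foliation, carries the same asymptotic orbit as $u$ — and is the one step where genuine work is needed.
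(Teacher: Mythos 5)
Your overall strategy --- control $u * u^\pm$ by Siefring's pairing, then use positivity of intersections and the asymptotics at $\gamma_\infty$ --- is the same one the paper uses, but the two steps where you defer the work are precisely where your sketch would not go through. First, the sign estimate. The paper shows $u * u^\pm = 0$, and the mechanism is not the one you describe: the relative intersection number $u \bullet_\Phi u^\pm$ depends on the relative homology class of $u$, which is completely unconstrained (arbitrary negative punctures at arbitrary orbits), so it cannot be ``reduced to a homological quantity that vanishes because $S$ is a sphere.'' What actually works is: (i) $u^\pm$ has no negative ends, so it may be $\RR$-translated into $[0,\infty) \times M$; (ii) since $u$ has a single positive puncture, asymptotic to $\gamma_\infty$, it is homotopic through asymptotically cylindrical maps to a map whose intersection with $[0,\infty)\times M$ is the trivial half-cylinder over $\gamma_\infty$, so by homotopy invariance $u * u^\pm = u^\pm * (\RR\times\gamma_\infty)$; and (iii) this vanishes because $u^\pm_M(\CC)$ is an open hemisphere disjoint from $\gamma_\infty$ (no geometric intersections) while $e_1(u^\pm)$ has the extremal winding $\a_-^\Phi(\gamma_\infty)$, forced by $d_0(u^\pm) \le c_N(u^\pm) = 0$ via \eqref{cnineq}, so there is no asymptotic contribution. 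The topology of $S$ plays no role in this step, and your claimed ``analogous computation'' of $u * C_\infty$ does not go through either, since the trivial cylinder cannot be translated into the positive end; fortunately that pairing is never needed.

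Second, even granting $u * u^\pm \le 0$ and hence disjointness of $u$ from all $\RR$-translates of $u^\pm$ and from $C_\infty$, your concluding ``foliation'' argument fails as stated: these leaves foliate only the hypersurface $\RR \times S$, which has codimension one in $\RR \times M$, so disjointness from them does not force the image of $u$ near its puncture to coincide with a leaf --- a priori $u$ could approach $\gamma_\infty$ in a direction transverse to $S$, and unique continuation only helps once coincidence on an open set is already known. Closing this gap is exactly the winding analysis you postponed, and it is where the even parity of $\gamma_\infty$ and the relation $e_1(u^+) = -e_1(u^-)$ are actually used (not in the sign estimate, as your sketch suggests): if $\wind^\Phi(e_1(u)) < \a_-^\Phi(\gamma_\infty)$, the projections of $u$ and $u^\pm$ to $M$ must cross near $\gamma_\infty$, so some $\RR$-translate of $u$ meets $u^\pm$, contradicting $u * u^\pm = 0$; since the parity is even, Proposition~\ref{spectral} makes the negative-eigenvalue eigenspace with winding $\a_-^\Phi(\gamma_\infty)$ one-dimensional, so $e_1(u)$ is a multiple of $e_1(u^+) = -e_1(u^-)$ and after a unique $\RR$-translation equals $e_1(u^+)$ or $e_1(u^-)$; and then the relative asymptotic formula of \cite{Siefring:asymptotics} shows that if $u$ is not a reparametrization of that plane, their relative approach is governed by an eigenfunction of strictly smaller winding, which by \cite{Siefring:intersection} contributes a positive hidden intersection at infinity, giving $u * u^\pm > 0$, a contradiction. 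Your preliminary reduction to somewhere injective curves is fine (and essentially automatic, since a $k$-fold cover with one positive end at the simple orbit $\gamma_\infty$ forces $k=1$), but without the two steps above the proof is not complete.
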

\begin{proof}
We use Siefring's intersection theory.  Fix a trivialization $\Phi$
of $\xi$ along~$\gamma_\infty$.  The first observation is that
since $d_0(u^\pm) \le c_N(u^\pm) = 0$, the eigenfunctions
$e_1(u^\pm)$ both have maximal winding $\a_-^\Phi(\gamma_\infty)$.
Since $\cz{\gamma_\infty}{\Phi}$ is even, Proposition~\ref{spectral} implies
that there is only a $1$-dimensional eigenspace 
$$
E_\lambda \subset \mc{C}^\infty(\gamma_\infty^*\xi)
$$
of $A_{\gamma_\infty}$ with negative eigenvalue and winding 
$\a_-^\Phi(\gamma_\infty)$, and $e_1(u^\pm) \in E_\lambda$.
Denoting the trivial cylinder over $\gamma_\infty$ by $\RR \times \gamma_\infty$,
this implies
$$
u^\pm * (\RR \times \gamma_\infty) = 0.
$$
Indeed, there are no geometric intersections between $u^\pm$ and
$\RR \times \gamma_\infty$ since $u^\pm_M(\CC)$ is the interior of~$S_\pm$,
but one must still rule out asymptotic contributions, i.e.~``hidden''
intersections at infinity.  These are
characterized in \cite{Siefring:intersection} in terms of relative winding
numbers, and in the present situation, the asymptotic contribution to
$u^\pm * (\RR \times \gamma_\infty)$ vanishes if and only if the
asymptotic representative describing the approach of $u^\pm$ to
$\RR \times \gamma_\infty$ at infinity has maximal winding.  This is true
since $\wind^\Phi(e_1(u^\pm)) = \a_-^\Phi(\gamma_\infty)$.

Now suppose $u : \dot{\Sigma} \to \RR \times M$ is a $J_+$-holomorphic
curve with the specified properties.  We claim
\begin{equation}
\label{eqn:noIntersection}
u * u^\pm = 0.
\end{equation}
To see this, first use $\RR$-translation to move $u^\pm$ until its image is
contained in $[0,\infty) \times M$, which is possible since $u^\pm$ has
no negative punctures.  Then notice that since $u$ has only one positive 
puncture and it is asymptotic to $\gamma_\infty$, $u$ admits a homotopy 
through asymptotically cylindrical (but not necessarily $J_+$-holomorphic) 
maps to a map whose intersection with $[0,\infty) \times M$ is identical
to the top half of the trivial cylinder $\RR \times \gamma_\infty$.  Using the homotopy
invariance of the intersection product, it follows that $u * u^\pm =
u^\pm * (\RR \times \gamma_\infty) = 0$.

Finally, let $e_1(u)$ denote the nontrivial asymptotic eigenfunction
in \eqref{eqn:asympFormula} that controls the approach of $u$ to 
$\gamma_\infty$ at its positive puncture.  If $e_1(u) \not\in E_\lambda$,
then $\wind^\Phi(e_1(u)) < \a_-^\Phi(\gamma_\infty) = \wind^\Phi(e_1(u^\pm))$.
In this case the projections of $u$ and $u^\pm$ to $M$ obviously
intersect each other near~$\gamma_\infty$, implying that some $\RR$-translation
of $u$ intersects $u^\pm$, but this is impossible by \eqref{eqn:noIntersection}.
We therefore have $e_1(u) \in E_\lambda$.  But observe now that
applying $\RR$-translations to $u$ modifies $e_1(u)$ by multiplication with
a positive constant, so since $\dim E_\lambda = 1$ and $e_1(u^+)$ and
$e_1(u^-)$ have opposite signs, there exists a unique $\RR$-translation 
for which $e_1(u)$ precisely matches either $e_1(u^+)$ or~$e_1(u^-)$.
For concreteness, suppose $e_1(u) = e_1(u^+)$.  Then the main results of
\cite{Siefring:asymptotics} imply that unless $u = u^+$ up to
parametrization, there is a nontrivial asymptotic eigenfunction
controlling the approach of $u$ to $u^+$ at their positive ends, and it lies 
in a different eigenspace, with winding strictly less 
than~$\a_-^\Phi(\gamma_\infty)$.  The characterization of asymptotic
contributions in \cite{Siefring:intersection} then implies that
$u * u^+ > 0$, again contradicting \eqref{eqn:noIntersection}.
\end{proof}

\begin{remark}
The above lemma does not specifically require the manifold $M$ to be reducible:
it only requires the existence of a simple Reeb orbit $\gamma_\infty$
spanned by two disjoint embedded index~$1$ holomorphic planes $u^\pm$ that
approach $\gamma_\infty$ ``from opposite sides'' in the sense described above.
The conditions $c_N(u^\pm) = u^\pm * u^\pm = u^+ * u^- = 0$ follow
automatically from these assumptions via \eqref{cneq} and Siefring's
adjunction formula \eqref{eqn:adjunction}.
\end{remark}

\subsection{Overtwisted contact $3$-manifolds}
\label{sec:OTcase}

If $(M,\xi)$ is overtwisted, then Eliashberg's appendix to
\cite{Yau:overtwisted} uses the following geometric picture 
to prove vanishing of contact homology.  There is a nondegenerate contact
form $\alpha_+$ and an almost complex structure $J_+ \in \jJ(\alpha_+)$
admitting an embedded Fredholm regular $J_+$-holomorphic plane
$$
u^\infty = (u^\infty_\RR,u^\infty_M) : \CC \to \RR \times M
$$
with index~$1$, asymptotic to a simple Reeb orbit
$$
\gamma_\infty : S^1 \to M
$$
with even Conley-Zehnder index, such that $u^\infty$ is (up to
parametrization and $\RR$-translation) the only nontrivial 
$J_+$-holomorphic curve in $\RR \times M$ with one positive puncture asymptotic to $\gamma_\infty$
(and arbitrary negative punctures).  A more detailed version of this
construction can be extracted as a special case from \cite{Wendl:openbook2},
using intersection-theoretic arguments similar to those of
\S\ref{sec:reducible} above.  Since $u^\infty$ is asymptotic to a simple
orbit, Lemma~\ref{lemma:dinfty} implies $\dinfty(u^\infty) = 
\bar{\sigma}(u^\infty) - 1 = 0$, and since it is also embedded,
$\dtotal(u^\infty)=0$.  Moreover, by \eqref{cneq}, $c_N(u^\infty)=0$,
so the adjunction formula \eqref{eqn:adjunction} now gives
$$
u^\infty * u^\infty = 2\dtotal(u^\infty) + c_N(u^\infty) +
\left[ \bar{\sigma}(u^\infty) - 1 \right] = 0,
$$
implying that $u^\infty$ is nicely embedded.

\section{A local adjunction formula for breaking holomorphic annuli}
\label{sec:localAdjunction}

The aim of this section is to prove Theorem \ref{thm:localAdjunction} and derive Corollary \ref{cor:localAdjunction}. 
We assume throughout that $M$ is a $3$-manifold
endowed with $\mc{C}^\infty$-converging sequences 
of contact forms $\alpha_k \to \alpha_\infty$ and admissible almost complex
structures $J_k \in \jJ(\alpha_k)$, $k \le \infty$.
Fix a nondegenerate closed Reeb orbit $\gamma : S^1 \to M$ for~$\alpha_\infty$
with covering multiplicity $m \in \NN$, period $T > 0$ and parity
$p(\gamma) \in \{0,1\}$.
We consider a sequence of $J_k$-holomorphic annuli
$$
u_k : ([-R_k,R_k'] \times S^1,i) \to (\RR \times M,J_k),
$$
where $R_k,R_k' \to \infty$ and $u_k$ converges in the SFT topology to 
a broken $J_\infty$-holomorphic annulus
$$
u_k \to (u_\infty^+ | u_\infty^-)
$$
in which both levels are embedded and $\gamma$ is the breaking orbit.
More precisely, $u_\infty^\pm$ are embedded $J_\infty$-holomorphic half-cylinders
$$
u_\infty^- : [0,\infty) \times S^1 \to \RR \times M, \qquad
u_\infty^+ : (-\infty,0] \times S^1 \to \RR \times M
$$
with 
$$
u_\infty^\pm(s,t) = \exp_{(Ts,\gamma(t))} h_\pm(s,t)
$$
for some translation-invariant metric on $\RR \times M$ and asymptotic
representatives satisfying $h_\pm(s,\cdot) \to 0$ with all derivatives
as $s \to \mp\infty$.  To say what the convergence $u_k \to (u_\infty^+ | u_\infty^-)$
means, denote the $\RR$-translation action on $\RR \times M$ by
$$
\tau_c : \RR \times M \to \RR \times M : (r,x) \mapsto (r + c,x)
$$
for $c \in \RR$.  Then we require
$$
\tau_{r_k} \circ u_k(\cdot + R_k',\cdot) \to u_\infty^+ \quad \text{ in } \quad
\Cinftyloc((-\infty,0] \times S^1,\RR \times M)
$$ 
for some sequence $r_k \to -\infty$, while
$$
\tau_{r_k} \circ u_k(\cdot - R_k,\cdot) \to u_\infty^- \quad\text{ in }\quad
\Cinftyloc([0,\infty) \times S^1,\RR \times M)
$$ 
for some sequence $r_k \to +\infty$.  Additionally, choose diffeomorphisms 
$\varphi_- : [-1,0) \to [0,\infty)$ and $\varphi_+ : (0,1] \to (-\infty,0]$,
let $\pi_M : \RR \times M \to M$ denote the natural projection, and define
the continuous map
$$
\bar{u}_\infty^M : [-1,1] \times S^1 \to M : (s,t) \mapsto
\begin{cases}
\pi_M \circ u_\infty^+(\varphi_+(s),t) & \text{ if $s > 0$},\\
\gamma(t) & \text{ if $s=0$},\\
\pi_M \circ u_\infty^-(\varphi_-(s),t) & \text{ if $s < 0$}.
\end{cases}
$$
We then also require the existence of a sequence of
diffeomorphisms $\varphi_k : [-1,1] \times S^1 \to [-R_k,R_k'] \times S^1$ 
such that
$$
\pi_M \circ u_k \circ \varphi_k \to \bar{u}_\infty^M \quad\text{ in }\quad
C^0([-1,1] \times S^1,M).
$$
With these hypotheses understood, the statement of 
Theorem~\ref{thm:localAdjunction} is that for all $k$ sufficiently large,
$$
2\delta(u_k) = 2[ \delta_\infty(u_\infty^+) + \delta_\infty(u_\infty^-) ]
+ \left[ \bar{\sigma}_+(\gamma) - 1 \right] 
 + \left[ \bar{\sigma}_-(\gamma) - 1 \right]
+ (m - 1) p(\gamma).
$$

The proof is based on a relative adjunction formula in the style of
Hutchings \cite{Hutchings:index}.  Since $u_\infty^+$ and $u_\infty^-$
are embedded, we are free to assume $u_k$ is embedded
near the boundary of its domain for sufficiently large~$k$; moreover,
if we reparametrize $u_\infty^\pm$ by suitable shifts to focus only on
neighborhoods of $\pm\infty$, the corresponding adjustments in $u_k$
can be arranged so that its tangent spaces are close to those of a
trivial cylinder for large~$k$.  This means replacing the domains
$[-R_k,R_k'] \times S^1$ of $u_k$ with smaller domains that nonetheless
still expand to infinite length, and we do not lose any singularities this
way since $u_\infty^\pm$ are both embedded on the corresponding portions
of their domains that are being discarded.  With this understood, if
we choose a trivialization $\Phi$ of 
$\xi = \ker \alpha_\infty$ along~$\gamma$, this determines
a trivialization of the normal bundle of $u_k$ along its boundary
uniquely up to homotopy
for large~$k$.  Define the \defin{relative self-intersection number}
of~$u_k$,
$$
u_k \bullet_\Phi u_k \in \ZZ
$$
as the algebraic count of intersections between $u_k$ and a generic
perturbation of $u_k$ that is pushed in the direction of $\Phi$ near
the boundary.  This number depends on~$\Phi$ up to homotopy.  We can
similarly define $u_\infty^\pm \bullet_\Phi u_\infty^\pm$ with the condition
that the second copy of $u_\infty^\pm$ is pushed by some small but nonzero
amount in the direction of $\Phi$ both near its boundary and near infinity.
The convergence $u_k \to (u_\infty^+ | u_\infty^- )$ then implies
\begin{equation}
\label{eqn:selfIntSum}
u_k \bullet_\Phi u_k = u_\infty^+ \bullet_\Phi u_\infty^+ +
u_\infty^- \bullet_\Phi u_\infty^-
\end{equation}
for sufficiently large~$k$.  The relative adjunction formula relates
these self-intersection numbers to the count of double points and
corresponding relative first Chern numbers
$$
c_1^\Phi(u_k^*T(\RR \times M)), \, c_1^\Phi((u_\infty^\pm)^*T(\RR \times M)) \in \ZZ,
$$
defined by regarding $\Phi$ as a trivialization of the normal bundle
of $u_k$ or $u_\infty^\pm$ over the boundary and/or near infinity---this
sums with the canonical parallelization of the domains (annuli and half-cylinders)
to give trivializations of the pulled back tangent bundle on these regions.
Appealing again to the convergence $u_k \to (u_\infty^+ | u_\infty^- )$, we
have
\begin{equation}
\label{eqn:c1sum}
c_1^\Phi(u_k^*T(\RR \times M)) = c_1^\Phi((u_\infty^+)^*T(\RR \times M)) +
c_1^\Phi((u_\infty^-)^*T(\RR \times M))
\end{equation}
for large~$k$.  Note that if $u_k$ is immersed with normal bundle $N_{u_k}$, then
$c_1^\Phi(u_k^*T(\RR \times M)) = c_1^\Phi(N_{u_k})$ since the domains have
vanishing Euler characteristic.  If $u_k$ is not immersed, one can
perturb it to an immersion without changing $c_1^\Phi(u_k^*T(\RR \times M))$,
so the same argument used to prove the adjunction formula for closed
holomorphic curves gives the relative formula
$$
u_k \bullet_\Phi u_k = 2\delta(u_k) + c_1^\Phi(u_k^*T(\RR \times M)).
$$
This makes crucial use of the fact that $u_k$ is embedded near
the boundary for large~$k$, so pushing it in normal directions determined
by $\Phi$ does not produce any intersections near the boundary.  

The analogous story for $u_\infty^\pm$ is slightly more complicated if 
$\gamma$ has covering multiplicity $m > 1$, because new intersections 
\emph{can} appear
near infinity after pushing off via~$\Phi$.  This phenomenon was observed
in \cite{Hutchings:index}*{\S 3.2} and quantified in terms of the \emph{writhe}
of a braid determined by the asymptotic behaviour of~$u_\infty^\pm$.
Using notation adapted from \cite{Siefring:intersection}*{\S 3.2}
(see also \cite{Wendl:Durham}*{\S 4.3}), we denote by
$$
i_\infty^\Phi(u_\infty^\pm) \in \ZZ
$$
the algebraic count of intersections near infinity between $u_\infty^\pm$ and a
small perturbation of itself via~$\Phi$.
This number matches the writhe described in \cite{Hutchings:index} up
to a sign.
It is only nonzero if $m \ge 2$, and in that case it depends
on $\Phi$ up to homotopy; in \cite{Siefring:intersection} it is expressed
as a sum of winding numbers of asymptotic eigenfunctions that 
control the relative approach of different parametrizations of $u_\infty^\pm$
near the orbit.  The bounds on these winding numbers coming from
Proposition~\ref{spectral} lead to the bound
\begin{equation}
\label{eqn:iInftyBound}
i_\infty^\Phi(u_\infty^\pm) \ge \Omega_\mp^\Phi(\gamma) :=
\pm (m-1) \a_\pm^\Phi(\gamma) + \left[ \bar{\sigma}_\pm(\gamma) - 1 \right],
\end{equation}
which furnishes the definition of the number $\dinfty(u_\infty^\pm)$ counting
``hidden'' double points at infinity:
\begin{equation}
\label{eqn:dinftyDef}
\dinfty(u_\infty^\pm) = \frac{1}{2}\left[ i_\infty^\Phi(u_\infty^\pm) - \Omega_\mp^\Phi(\gamma) \right] \ge 0.
\end{equation}
Including the contribution from intersections near infinity,
the relative adjunction formula for $u_\infty^\pm$
takes the form
$$
u_\infty^\pm \bullet_\Phi u_\infty^\pm = 2\delta(u_\infty^\pm) +
c_1^\Phi((u_\infty^\pm)^*T(\RR \times M)) + i_\infty^\Phi(u_\infty^\pm).
$$
We are now ready to prove both the theorem and the corollary.

\begin{proof}[Proof of Theorem~\ref{thm:localAdjunction}]
We can use the various relative adjunction formulas to rewrite both the left
and right hand sides of \eqref{eqn:selfIntSum}, thus
\begin{equation*}
\begin{split}
2\delta(u_k) + c_1^\Phi(u_k^*T(\RR \times M)) &= 2\delta(u_\infty^+) +
c_1^\Phi((u_\infty^+)^*T(\RR \times M)) + i_\infty^\Phi(u_\infty^+) \\
&+ 2 \delta(u_\infty^-) +
c_1^\Phi((u_\infty^-)^*T(\RR \times M)) + i_\infty^\Phi(u_\infty^-).
\end{split}
\end{equation*}
The terms $\delta(u_\infty^\pm)$ vanish since $u_\infty^+$ and $u_\infty^-$
are embedded, and combining this with \eqref{eqn:c1sum} gives
$$
2\delta(u_k) = i_\infty^\Phi(u_\infty^+) + i_\infty^\Phi(u_\infty^-).
$$
Now plugging in \eqref{eqn:dinftyDef} and $p(\gamma) = \a_+^\Phi(\gamma) -
\a_-^\Phi(\gamma)$ gives the stated formula.
\end{proof}

\begin{proof}[Proof of Corollary~\ref{cor:localAdjunction}]
The local adjunction formula implies that if $\d(u_k)=0$, then
\begin{enumerate}
\item $\delta_\infty(u_\infty^+) = \delta_\infty(u_\infty^-) = 0$,
\item $\bar{\sigma}_+(\gamma) = \bar{\sigma}_-(\gamma) = 1$, and
\item $\gamma$ is either simply covered or has even parity.
\end{enumerate}
In the case with even parity, we can derive further constraints on the
multiplicity $m$ from the condition on the spectral covering numbers: recalling
Proposition~\ref{spectral}, the extremal winding numbers $\a_\pm^\Phi(\gamma)$
for positive and negative eigenvalues match, and $\bar{\sigma}_\pm(\gamma) = 1$ 
means that any nontrivial eigenfunction in the corresponding
eigenspaces $E_\pm$ is simply covered.  Both of these eigenspaces are also
$1$-dimensional, and since $\gamma$ has multiplicity~$m$, there is a linear 
$\ZZ_m$-action on each $E_\pm$ generated 
by the map that sends an eigenfunction $e \in E_\pm$ to $e(\cdot + 1/m)$.
This defines a real $1$-dimensional representation of $\ZZ_m$, and the
representation must be faithful since $E_\pm$ contains simply covered
eigenfunctions.  Real $1$-dimensional representations of finite groups
can act only by $\pm 1$, so the only possibility for $m > 1$ is that $m=2$
and the generator of $\ZZ_2$ acts by~$-1$.  We claim finally that in this
case, the underlying simple orbit has odd parity.  Indeed, Proposition~\ref{spectral}
implies that it would otherwise have two eigenfunctions with the same winding
but eigenvalues of opposite sign, and $E_\pm$ would then have to consist of the
double covers of those eigenfunctions, which is a contradiction.
\end{proof}

\section{Compactness for nicely embedded planes in cobordisms}
\label{sec:compactness}

In this section we fix the following assumptions.  Let $(W,d\lambda)$ denote
a $4$-dimensional Liouville cobordism with concave boundary 
$(M_-,\xi_- = \ker\alpha_-)$ and convex boundary $(M_+,\xi_+ = \ker\alpha_+)$, where $\lambda|_{TM_\pm} = 
\alpha_\pm$, with $\alpha_-$ assumed nondegenerate and $\alpha_+$ Morse-Bott.
The symplectic completion of $W$ will be denoted as usual by $\overline{W}$, and
we choose $J \in \jJ(W,\omega,\alpha_+,\alpha_-)$ to be generic in the
interior of $W$ such that its restriction $J_-$ to the negative end is also generic;
in particular, this means that all simple $J$-holomorphic curves
in $\overline{W}$ that pass through the interior of $W$ have nonnegative index,
and all simple $J_-$-holomorphic curves in $\RR \times M_-$ 
other than trivial cylinders have index
at least~$1$.  Fix a simply covered Reeb orbit
$$
\gamma_\infty : S^1 \to M_+.
$$
The main objective of this section is the following theorem, which 
characterizes the closure in the
SFT compactification of the set of planes in $\mM(J,\gamma_\infty,\emptyset)$
that are nicely embedded in the sense of Definition~\ref{ne}.
For application to our main theorems,
the results of \S\ref{sec:seed} permit us to ignore holomorphic buildings
with nontrivial upper levels.

\begin{thm}
\label{thm:compactness}
Suppose $u_k \in \mM(J,\gamma_\infty,\emptyset)$ is a sequence of nicely
embedded planes converging in the sense of \cite{SFTcompactness} to a
holomorphic building $u_\infty \in \overline{\mM}(J,\gamma_\infty,\emptyset)$
with no nontrivial upper levels but at least one nontrivial lower level.
Then all components of the levels of $u_\infty$ other than trivial cylinders
are nicely embedded, all breaking orbits are either simply covered or are
doubly covered bad orbits, and
$u_\infty$ fits one of the following descriptions (see Figure~\ref{fig:types}):
\begin{itemize}
\item Type~(I): $(v_0 | v_1^-)$, where $v_0$ is an index~$0$ cylinder,
$v_1^-$ is an index~$1$ plane, and the breaking orbit has even parity.
\item Type~(II): $(v_0 | v_1^-)$, where $v_0$ is an index~$0$ cylinder,
$v_1^-$ is an index~$2$ plane, and the breaking orbit has odd parity.
\item Type~(III): $(v_0 | v_1^-)$, where $v_0$ is an index~$1$ cylinder,
$v_1^-$ is an index~$1$ plane, and the breaking orbit has even parity.
\item Type~(IV): $(v_0 | v_1^-)$, where $v_0$ has index~$0$ and two negative
punctures, $v_1^-$ is a disjoint union of two index~$1$ planes, and
both breaking orbits have even parity.
\item Type~(V): $(v_0 | v_1^- | v_2^-)$, where $v_0$ has index~$0$ and two
negative punctures, $v_1^-$ is the disjoint union of a trivial cylinder with
an index~$1$ plane, and $v_2^-$ is an additional index~$1$ plane, with all
breaking orbits having even parity.
\item Type~(VI): $(v_0 | v_1^- | v_2^-)$, where $v_0$ is an index~$0$ cylinder,
$v_1^-$ is an index~$1$ cylinder and $v_2^-$ is an index~$1$ plane, 
the breaking orbit between
$v_0$ and $v_1^-$ has odd parity, and the breaking orbit between
$v_1^-$ and $v_2^-$ has even parity.
\end{itemize}
\end{thm}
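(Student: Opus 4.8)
The plan is to combine the intersection‑theoretic rigidity of nicely embedded curves with the structural constraints from Proposition~\ref{prop:tree} and a dimension/index budget, and then to invoke Corollary~\ref{cor:localAdjunction} to pin down the covering multiplicities of the breaking orbits. I would begin by recording the numerics. Since $u_k$ is a plane asymptotic to the simply covered orbit $\gamma_\infty$, Lemma~\ref{lemma:dinfty} gives $\dinfty(u_k)=\bar{\sigma}(u_k)-1=0$, so the adjunction formula \eqref{eqn:adjunction} reads $u_k*u_k=c_N(u_k)$, which is $\le 0$ because $u_k$ is nicely embedded. Because $u_\infty$ has a nontrivial lower level containing, by Proposition~\ref{prop:tree}, an honest plane in $\RR\times M_-$ of index $\ge 1$, and because $\ind{u_k}$ equals the dimension of the stratum of $\overline{\mM}(J,\gamma_\infty,\emptyset)$ at $u_\infty$, we get $\ind{u_k}\ge 1$; together with Lemma~\ref{lemma:neIndex} and \eqref{cneq} this forces $c_N(u_k)=0$, $\ind{u_k}\in\{1,2\}$, and $u_k*u_k=0$. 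By homotopy invariance of the intersection pairing in the SFT topology, $u_\infty*u_\infty=u_k*u_k=0$.

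Next I would pin down the topology of $u_\infty$ via Proposition~\ref{prop:tree}: no nodes, every component of every level is a sphere with exactly one positive puncture, the lowest level is a disjoint union of planes, and the top level is connected; since $\gamma_\infty\subset M_+$ and there are no nontrivial upper levels, the main level is a single connected sphere $v_0$ carrying the positive puncture at $\gamma_\infty$ and $\ell\ge 1$ negative punctures, and $v_0$ is somewhere injective because a multiple cover with one positive puncture forces its positive asymptotic orbit to be multiply covered — the same remark shows that any multiply covered component of $u_\infty$ has a multiply covered breaking orbit as its positive asymptotic. Feeding this into the decomposition of Proposition~\ref{prop:intBuilding} gives $0=u_\infty*u_\infty\ge v_0*v_0+\sum_j v_j^-*v_j^-+\sum_\gamma m(\gamma)p(\gamma)$, the last sum over all breaking orbits. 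For a component $C$ of a lower level that is not a trivial cylinder, \eqref{eqn:windpiRelation} gives $c_N(C)=\windpi(C)+d_0(C)\ge 0$, so by \eqref{eqn:adjunction}, $C*C=2\dtotal(C)+c_N(C)+[\bar{\sigma}(C)-\#\Gamma(C)]\ge 0$; trivial cylinders contribute $0$ and cross terms between distinct components are $\ge 0$ by positivity of intersections, so each $v_j^-*v_j^-\ge 0$, hence $v_0*v_0\le 0$ and $\sum_\gamma m(\gamma)p(\gamma)\le -v_0*v_0$.

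The index budget then does the rest. Writing $\ind{u_k}$ as the stratum dimension at $u_\infty$, a standard count (reflected in Proposition~\ref{prop:gluing}) expresses it through $\sum_C\ind{C}$, the number of lower levels, and the number of breaking orbits; since $\ind{u_k}\le 2$, interior components have nonnegative index, non‑trivial‑cylinder somewhere injective components of $\RR\times M_-$ have index $\ge 1$, and covers of trivial cylinders are constrained by Propositions~\ref{prop:trivCyls} and~\ref{prop:coversHyp} together with Riemann--Hurwitz, there remain only finitely many admissible combinatorial types. A lower‑level component $C$ with $C*C\ge 1$ would need $c_N(C)\ge 1$, hence — via \eqref{cneq} — an index too large once one adds the indices of the levels forced to lie below $C$; so $C*C=0$, hence $\dtotal(C)=0$, for every non‑trivial‑cylinder lower‑level component. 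The same bookkeeping pins $v_0*v_0\in\{0,-1\}$ with $\dtotal(v_0)=0$ in both cases, rules out multiply covered non‑trivial‑cylinder components (their positive orbit would be a multiply covered breaking orbit, forced by Corollary~\ref{cor:localAdjunction} to be doubly covered negative hyperbolic, whereupon Proposition~\ref{prop:coversHyp} makes the component's index too large), and thereby shows every non‑trivial‑cylinder component of $u_\infty$ is nicely embedded.

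Finally I would run the enumeration. Applying Corollary~\ref{cor:localAdjunction} to the embedded annular pieces of $u_k$ straddling each breaking orbit — their limit being the broken pair of half‑cylinders cut from the two consecutive components of $u_\infty$ meeting there, which are embedded near their punctures by \cite{Siefring:asymptotics} — shows each breaking orbit is either simply covered or a doubly covered bad orbit. Combining this with the index budget, with $\sum_\gamma m(\gamma)p(\gamma)\le -v_0*v_0\le 1$ (so at most one breaking orbit has odd parity, and only when $v_0$ is an index‑$0$ cylinder with $p(\gamma_\infty)=1$), and with the tree structure of Proposition~\ref{prop:tree}, the finitely many surviving configurations are precisely Types~(I)--(VI). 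I expect the main obstacle to be exactly this enumeration: getting the index/dimension bookkeeping right across several levels — in particular accounting correctly for the $\RR$‑translation freedoms and for the gluing behaviour at bad breaking orbits, where naive additivity of the index can fail — while excluding spurious configurations (branched covers of trivial cylinders appearing as components, levels with too many components) and keeping track of the mildly awkward main level $v_0$, whose normal Chern number can be $-1$ so that $v_0*v_0$ is not a priori nonnegative.
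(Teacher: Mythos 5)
Your toolkit is essentially the paper's (adjunction, Proposition~\ref{prop:intBuilding}, the genericity index bounds, Propositions~\ref{prop:trivCyls} and~\ref{prop:coversHyp}, Corollary~\ref{cor:localAdjunction}), but the order in which you deploy it creates a genuine gap. You apply Corollary~\ref{cor:localAdjunction} at every breaking orbit on the grounds that the adjacent half-cylinders are ``embedded near their punctures by Siefring's asymptotics,'' and then use its conclusion (breaking orbits simple or doubly covered bad, hence with hyperbolic underlying orbit) to rule out multiply covered components via Proposition~\ref{prop:coversHyp}. This is circular: embeddedness of an end near a puncture is guaranteed only for curves that are \emph{not} multiple covers, and at the positive puncture of a $k$-fold cover the end wraps $k\ge 2$ times around the orbit, so the hypotheses of Theorem~\ref{thm:localAdjunction} fail precisely in the case you are trying to exclude. (Even granting the conclusion, Proposition~\ref{prop:coversHyp} needs \emph{all} asymptotic orbits of the underlying curve to be hyperbolic; a covered cylinder or pair of pants could have a simple elliptic negative breaking orbit, to which it does not apply.) The paper breaks this circle by doing the parity bookkeeping first (Lemma~\ref{lemma:even}, via $\hat{c}_N$): all lower-level components have $c_N=0$ and all breaking orbits are even except possibly one attached to a main-level cylinder $v_0$ with $c_N(v_0)=-1$; since even orbits are automatically hyperbolic, Proposition~\ref{prop:coversHyp} together with the index budget and the capping planes (no unbranched cover $\CC\to\CC$ exists) excludes multiple covers, the odd breaking orbit is shown \emph{simple} directly from Proposition~\ref{prop:intBuilding} ($0\ge -1+m(\gamma)$), and only then is Corollary~\ref{cor:localAdjunction} invoked, when every relevant end is known to be embedded.

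Two further steps in your bookkeeping would also fail as written. First, ``trivial cylinders contribute $0$'' is only true over orbits of even parity (Proposition~\ref{prop:evenCylinder}): a trivial cylinder over an odd orbit has strictly negative $*$-square (its relative self-intersection vanishes while the correction terms $\Omega^\Phi_\pm$ in the proof of Proposition~\ref{prop:intBuilding} sum to $m\,p(\gamma)>0$), and likewise two covers of the same trivial cylinder have identical images, so positivity of intersections gives nothing there; hence your key estimate $\sum_\gamma m(\gamma)p(\gamma)\le -v_0*v_0\le 1$ is not yet justified until trivial cylinders over odd orbits are excluded, which is exactly what the paper's $\hat{c}_N$ argument plus stability accomplishes. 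Second, the inference ``a lower-level component with $C*C\ge 1$ would need $c_N(C)\ge 1$'' is false, since $C*C$ can be positive through $\dtotal(C)$ or $\bar{\sigma}(C)-\#\Gamma$ while $c_N(C)=0$; the correct (and simpler) argument is the paper's: once every term in the Proposition~\ref{prop:intBuilding} decomposition is nonnegative and the total is $\le 0$, all terms vanish, and $\dtotal=0$ then follows from adjunction because $c_N=0$. Relatedly, your opening claim that $\ind{u_k}\ge 1$ because the bottom level contains a plane of index $\ge 1$ presumes that plane is somewhere injective, which at that stage is unproved; the paper instead gets $c_N(u_k)=0$ (hence $\ind{u_k}\in\{1,2\}$) from the $\hat{c}_N$ identity \eqref{eqn:cnSum}.
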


\begin{figure}
\includegraphics{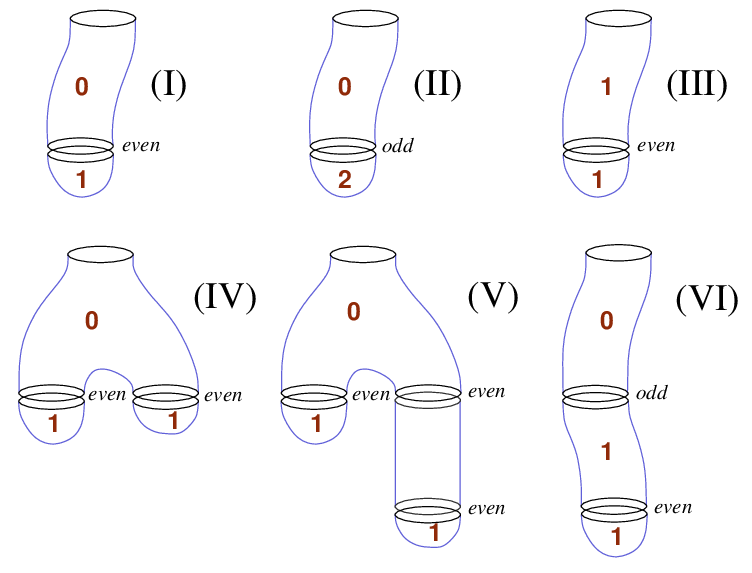}
\caption{\label{fig:types} The six types of holomorphic buildings in
Theorem~\ref{thm:compactness}.}
\end{figure}

We begin with a few preliminary observations that will be used repeatedly
in the proof.  Recall that by Prop.~\ref{prop:tree}, $u_\infty$ must
have the structure of a tree, with all connected components having exactly one
positive puncture and the bottom level being a disjoint union of capping
planes.  The top of this tree is the main level, which will always be 
somewhere injective since $\gamma_\infty$ is a simply covered orbit.
Moreover, since $J$ is generic, Equation~\ref{cneq} and
Lemma~\ref{lemma:neIndex} imply that the curves $u_k$ in our sequence can 
be assumed to satisfy either $\ind{u_k} \in \{1,2\}$ and $c_N(u_k)=0$
or $\ind{u_k}=0$ and $c_N(u_k) = -1$.
Note that all the building types in the above theorem
have total index~$2$ except for Type~(I), which occurs in the index~$1$ case.
We shall denote
$$
u_\infty = (v_0 | v_1^- | \ldots | v_N^-)
$$
where by assumption $N \ge 1$, and each $v_j^-$ is in general a disjoint
union of $m_j \ge 1$ connected curves
$$
v_{j,1}^-,\ldots,v_{j,m_j}^-,
$$
each having exactly one positive puncture.  
The definition of the normal Chern number, together with the relation 
\eqref{eqn:parity} between parities and winding numbers, implies the formula
$$
0 \ge c_N(u_k) = c_N(v_0) + \sum_{j=1}^N \sum_{i=1}^{m_j} c_N(v_{j,i}^-) + 
\sum_{j=1}^N \sum_{i=1}^{m_j} p(\gamma_{j,i}),
$$
where $\gamma_{j,i}$ denotes the asymptotic orbit at the unique positive
puncture of $v_{j,i}^-$, i.e.~the $\gamma_{j,i}$ are all the breaking orbits
of~$u_\infty$.  Since $u_\infty$ has no negative punctures in its lowest
level, we can conveniently repackage this formula as
\begin{equation}
\label{eqn:cnSum}
0 \ge c_N(u_k) = \hat{c}_N(v_0) + \sum_{j=1}^N \sum_{i=1}^{m_j}
\hat{c}_N(v_{j,i}^-),
\end{equation}
where for any punctured holomorphic curve~$w$, we define
$\hat{c}_N(w)$ to be the sum of $c_N(w)$ with the parities of the asymptotic
orbits at all its negative punctures.

\begin{lem}
\label{lemma:even}
All the components $v_{j,i}^-$ in lower levels have $c_N(v_{j,i}^-) = 0$,
and one of the following holds:
\begin{enumerate}
\item All breaking orbits in $u_\infty$ have even parity and the main
level $v_0$ satisfies $c_N(v_0)=0$, or
\item The main level $v_0$ is a cylinder with $c_N(v_0)=-1$ whose negative 
asymptotic orbit has odd parity, and all other breaking orbits have even parity.
\end{enumerate}
\end{lem}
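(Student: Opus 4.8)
The plan is to show that every term on the right-hand side of \eqref{eqn:cnSum} is nonnegative; since the left-hand side is $\le 0$, this forces $c_N(u_k)=0$ and forces each term to vanish, after which the refined structure in the statement is extracted by identifying precisely which holomorphic curves can make a given term equal to zero.

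First I would bound the contributions $\hat{c}_N(v_{j,i}^-)$ of the lower-level components. If $v_{j,i}^-$ is not a cover of a trivial cylinder it lives in the symplectization $\RR\times M_-$, so \eqref{cnineq} gives $c_N(v_{j,i}^-)\ge 0$ and hence $\hat{c}_N(v_{j,i}^-)\ge 0$; moreover $\hat{c}_N(v_{j,i}^-)=0$ then forces $c_N(v_{j,i}^-)=0$ together with even parity of \emph{all} negative asymptotic orbits of $v_{j,i}^-$, since $\hat{c}_N$ is a sum of nonnegative integers. If $v_{j,i}^-$ is an unbranched trivial cylinder $\RR\times\gamma_*$, a direct computation from \eqref{cneq} shows $\hat{c}_N(v_{j,i}^-)=0$ irrespective of the parity of $\gamma_*$ (the value $c_N=-1$ that occurs when $\gamma_*$ is odd is exactly cancelled by the parity of the single breaking orbit). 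Finally, if $v_{j,i}^-$ is a \emph{branched} cover of a trivial cylinder then it has at least two negative punctures, and combining $\ind{v_{j,i}^-}\ge 0$ from Proposition~\ref{prop:trivCyls} with \eqref{cneq} yields $\hat{c}_N(v_{j,i}^-)\ge 0$ with equality only if $\ind{v_{j,i}^-}=0$ and every negative orbit has even parity; but Proposition~\ref{prop:trivCyls} says a branched cover of vanishing index must lie over an elliptic orbit, all of whose iterates have odd parity (Lemma~\ref{czcover}), so this case is impossible and in fact $\hat{c}_N(v_{j,i}^-)>0$ strictly. For the main level $v_0$, observe that $v_0$ is somewhere injective because $\gamma_\infty$ is simply covered, and that having ends on orbits of both $M_+$ and $M_-$ it passes through the interior of $W$, so $\ind{v_0}\ge 0$ by genericity of $J$; then \eqref{cneq} gives $c_N(v_0)\ge -1$, the case $c_N(v_0)=-1$ forces $\ind{v_0}=0$ with all punctures of $v_0$ of odd parity, whence $\hat{c}_N(v_0)=-1+\#\{\text{negative punctures of }v_0\}\ge 0$ and $\hat{c}_N(v_0)=0$ pins $v_0$ down to an index-$0$ cylinder with both orbits odd, while $c_N(v_0)\ge 0$ gives $\hat{c}_N(v_0)\ge 0$ trivially and $\hat{c}_N(v_0)=0$ then forces $c_N(v_0)=0$ and even parity of all negative orbits of $v_0$. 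Putting this together: all terms in \eqref{eqn:cnSum} vanish, no branched cover of a trivial cylinder occurs in a lower level, each non-trivial-cylinder lower component has $c_N=0$ and all negative orbits of even parity, and $v_0$ satisfies either the first or the second alternative of the statement.

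The hard part, and the one step I expect to need real care, is that this counting does \emph{not} by itself exclude an unbranched trivial cylinder $\RR\times\gamma_*$ over an \emph{odd} orbit $\gamma_*$ in a lower level, since such a cylinder has $\hat{c}_N=0$ although $c_N=-1$. To rule it out I would ascend the tree furnished by Proposition~\ref{prop:tree} (which has no nodes): the positive orbit $\gamma_*$ of such a cylinder is a negative asymptotic orbit of the component lying directly above it, and that parent cannot be a non-trivial lower component (all of whose negative orbits were just shown to be even) nor a branched cover of a trivial cylinder (already excluded), so it is either another unbranched trivial cylinder over $\gamma_*$ or it is $v_0$; iterating through the finitely many levels, $\gamma_*$ must therefore be a negative orbit of $v_0$. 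This is incompatible with the first alternative, and under the second alternative $v_0$ is a cylinder with a single negative puncture, so its unique child would be this trivial cylinder, making the corresponding lower level consist entirely of trivial cylinders and contradicting stability of the building \cite{SFTcompactness}. Hence every lower-level component has $c_N=0$, and a last pass through the parent relation shows that every breaking orbit has even parity with the sole possible exception of the negative orbit of $v_0$ in the cylinder case of the second alternative, which is exactly the asserted dichotomy. Beyond this chain argument, the only computations requiring attention are the parity bookkeeping relating $\#\Gamma_0$, $\ind{\cdot}$ and $\sum p$ for trivial cylinders and their branched covers over elliptic and negative hyperbolic orbits; the remainder is formal.
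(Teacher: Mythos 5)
Your proposal is correct and follows essentially the same route as the paper's proof: the decomposition \eqref{eqn:cnSum} into terms $\hat{c}_N \ge 0$, the equality analysis via \eqref{cneq}, \eqref{cnineq}, Proposition~\ref{prop:trivCyls} and Lemma~\ref{czcover}, and the appeal to stability to exclude a level consisting only of trivial cylinders beneath a cylindrical main level~$v_0$. Your explicit tree-ascent argument ruling out trivial cylinders over odd orbits, and your uniform treatment of unbranched covers regardless of parity, simply spell out what the paper's proof compresses into its final sentences.
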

\begin{proof}
Genericity implies $\ind{v_0} \ge 0$, so by \eqref{cneq}, $c_N(v_0) \ge -1$
with equality if and only if all the asymptotic orbits of $v_0$ have
odd parity, hence $\hat{c}_N(v_0) \ge 0$.
For components $v_{j,i}^-$ in lower levels, \eqref{cnineq}
implies $c_N(v_{j,i}^-) \ge 0$ unless $v_{j,i}^-$ is a cover of a trivial cylinder.
If on the other hand $v_{j,i}^-$ covers a trivial cylinder over some orbit~$\gamma$,
Proposition~\ref{prop:trivCyls} gives $\ind{v_{j,i}^-} \ge 0$, with strict
inequality unless $\gamma$ is elliptic or the cover is unbranched.
Strict inequality would imply $c_N(v_{j,i}^-) \ge 0$ due to
\eqref{cneq}.  The scenario $c_N(v_{j,i}^-) < 0$ can thus happen only if
$\gamma$ is elliptic and $\ind{v_{j,i}^-} = 0$, in which case \eqref{cneq} gives
$c_N(v_{i,j}) = -1$ and thus $\hat{c}_N(v_{i,j}) \ge 0$, with strict
inequality if $v_{i,j}$ has more than one negative puncture.
Using \eqref{eqn:cnSum}, we conclude that $\hat{c}(v_0)=0$ and
$\hat{c}(v_{j,i}^-)=0$ for every $i,j$, which implies that all components
in $u_\infty$ having negative normal Chern number have exactly one
negative puncture, i.e.~they are cylinders.  For components in lower levels,
this means they are trivial cylinders over elliptic Reeb orbits, and no
other lower level components can have any negative asymptotic orbits with
odd parity since this would imply $\hat{c}_N(v_{j,i}^-) > 0$.  Note that
since $u_\infty$ is assumed to be \emph{stable} in the sense of
\cite{SFTcompactness}, it does not have any levels consisting only of trivial
cylinders.  It follows that if $u_\infty$ has any odd breaking orbits at all, 
then $v_0$ is an index~$0$ cylinder with $c_N(v_0)=-1$ and a negative orbit
with odd parity, but all other breaking orbits in the building are even.
\end{proof}

\begin{proof}[Proof of Theorem~\ref{thm:compactness}]
Consider first the case where all breaking orbits have even parity.
We know that $v_0$ is somewhere injective since $\gamma_\infty$ is a simple
orbit, but each lower level component $v_{j,i}^-$ could be a multiple
cover, say a $k_{j,i}$-fold cover of a somewhere injective curve~$w_{j,i}$.
Each $w_{j,i}$ that is not a trivial cylinder satisfies $\ind{w_{j,i}} \ge 1$
due to genericity, so by Proposition~\ref{prop:coversHyp},
$$
\ind{v_{j,i}^-} \ge k_{j,i} \cdot \ind{w_{j,i}} \ge k_{j,i},
$$
with strict inequality unless the cover is unbranched.  Thus if any $k_{j,i}$
is greater than~$1$, the fact that $\ind{u_k} \le 2$ then implies
$\ind{v_{j,i}^-} = k_{j,i}=2$, $v_{j,i}^-$ is an unbranched double cover
of~$w_{j,i}$, and all other components in the building must 
have index~$0$.  Note that if this happens, $v_{j,i}^-$ cannot be a plane,
as $w_{j,i}$ would then also be a plane and the Riemann-Hurwitz formula
precludes the existence of an \emph{unbranched} cover $\CC \to \CC$.
But $u_\infty$ definitely also has components that are capping planes,
which also must have positive index, so this gives a contradiction,
proving that every $v_{j,i}^-$ is either somewhere injective or is a trivial
cylinder over an orbit with even parity.   Since the curves $u_k$ converging
to $u_\infty$ are all embedded, Corollary~\ref{cor:localAdjunction} now
implies that all breaking orbits are either simple or doubly covered bad orbits.
To see that all components are also nicely embedded, we use
Proposition~\ref{prop:intBuilding} to write
$$
0 \ge u_k * u_k \ge v_0 * v_0 + \sum_{j=1}^N \sum_{i,\ell=1}^{m_j}
v_{j,i}^- * v_{j,\ell}^-.
$$
Positivity of intersections together with Prop.~\ref{prop:evenCylinder}
implies that all terms on the right hand side are nonnegative, hence all of
them vanish, including each $v_{j,i}^- * v_{j,i}^-$.  Since $c_N(v_{j,i}^-)
= c_N(v_0) = 0$, the adjunction formula \eqref{eqn:adjunction} then gives $\dtotal(v_{j,u}^-) =
\dtotal(v_0) = 0$, hence all components are nicely embedded.

By Lemma~\ref{lemma:even}, we must also consider the case where
$v_0$ is a cylinder with $c_N(v_0) = -1$ whose asymptotic orbits are both odd,
while all other breaking orbits are even and all lower level components
satisfy $c_N(v_{j,i}^-) = 0$.  The adjunction formula \eqref{eqn:adjunction} implies
$v_0 * v_0 \ge -1$.  For any pair of components $v_{j,i}^-$ and $v_{j,\ell}^-$
that are not both trivial cylinders, positivity of intersections gives
$v_{j,i}^- * v_{j,\ell}^- \ge 0$; note that this is guaranteed even if they
are (covers of) the same curve since $\RR$-invariance allows us to push
one of them so that they have only isolated intersections.  This trick
only fails if both are covers of the same trivial cylinder, but in that case,
we know that the underlying Reeb orbit cannot have odd parity, hence
Prop.~\ref{prop:evenCylinder} applies again to give $v_{j,i}^- * v_{j,\ell}^- \ge 0$.
Now Proposition~\ref{prop:intBuilding} implies
$$
0 \ge u_k * u_k \ge -1 + m(\gamma),
$$
where $\gamma$ is the odd breaking orbit between $v_0$ and $v_{1,1}^-$ and
$m(\gamma) \in \NN$ denotes its covering multiplicity; we conclude that this
orbit must be simply covered.  It follows that $v_{1,1}^-$ is a somewhere
injective curve and thus has $\ind{v_{1,1}^-} \ge 1$.  For all curves in
levels below this, the absence of odd orbits means that we can again use the
arguments of the previous paragraph to rule out multiple covers, and we
conclude again that all components in $u_\infty$ are somewhere injective
except possibly for trivial cylinders.  The constraints on multiplicities
of the breaking orbits and the conclusion that all nontrivial components
are nicely embedded now follow from the same arguments using the local
adjunction formula and Proposition~\ref{prop:intBuilding}.

To obtain the classification of buildings stated in the theorem, it remains
only to add up Fredholm indices, using the fact that $\ind{v_0} \ge 0$ and
all nontrivial components in lower levels have index at least~$1$.  The
conclusions about parities of Reeb orbits then follow directly from the
index formula.
\end{proof}

\section{Proofs of the main theorems}
\label{sec:proofs}

We shall now prove Theorems~\ref{thm:main} and~\ref{thm:nonprime} in reverse
order.

\begin{proof}[Proof of Theorem~\ref{thm:nonprime}\eqref{item:OT}]
Assume $(W,d\lambda)$ is a $4$-dimensional Liouville co\-bor\-dism from 
$(M,\xi)$ to $(M_+,\xi_+)$, where $(M_+,\xi_+)$ is overtwisted, and
$\alpha$ is a nondegenerate contact form for~$\xi$.  After possibly rescaling $\alpha$
by a positive constant, we can arrange $\lambda|_{TM} = \alpha$ and
$\lambda|_{TM_+} = \alpha_+$, where $\alpha_+$ is the particular nondegenerate
contact form described in \S\ref{sec:OTcase}.  Arguing by contradiction,
assume $\alpha$ does not admit any unknotted Reeb orbit with
Conley-Zehnder index~$2$ and self-intersection number~$-1$.
By Lemma~\ref{lemma:selfLinking}, this means that nicely embedded planes
with index~$2$ and simply covered asymptotic orbits cannot exist in
$(\RR \times M,J_-)$ for any $J_- \in \jJ(\alpha)$; we will use this to exclude
some of the possible buildings listed in Theorem~\ref{thm:compactness}
and thus derive a contradiction.

Choose 
$J \in \jJ(W,d\lambda,\alpha_+,\alpha)$ to be generic in the interior of
$W$ such that its restriction to the negative cylindrical end is a 
generic element $J_- \in \jJ(\alpha)$ and its restriction to the positive
end matches $J_+ \in \jJ(\alpha_+)$ from \S\ref{sec:OTcase}.
Then the nicely embedded plane $u^\infty \in \mM(J_+,\gamma_\infty,\emptyset)$
constructed in that section gives rise to a $1$-parameter family of
nicely embedded curves in $\mM(J,\gamma_\infty,\emptyset)$, living in the
cylindrical end $[0,\infty) \times M_+ \subset \overline{W}$; we shall
refer to these henceforth as the \emph{seed curves} in~$\overline{W}$.
They are Fredholm regular by Prop.~\ref{prop:automatic}.  Let
$$
\mM\nice(J) \subset \mM(J,\gamma_\infty,\emptyset)
$$
denote the set of all nicely embedded planes in $\mM(J,\gamma_\infty,\emptyset)$
that belong to the same relative homology class as the seed curves.
These all have index~$1$, and the uniqueness of curves in $(\RR \times M_+,J_+)$
implies that all of them other than the seed curves intersect the region
where $J$ is generic, thus $\mM\nice(J)$ is a smooth $1$-manifold.  Its closure
$$
\overline{\mM}\nice(J) \subset \overline{\mM}(J,\gamma_\infty,\emptyset)
$$
in the SFT compactification can now be described as follows.  If $u_k \in \mM\nice(J)$
is a sequence converging to a building with a nontrivial upper level, then the uniqueness of
curves in $(\RR \times M_+,J_+)$ implies that this building is $(u^\infty |\ \emptyset)$,
where $u^\infty$ here is an upper level and the main level is empty.  This
can only be the limit if $u_k$ consists of seed curves for sufficiently large~$k$,
so a neighborhood of $(u^\infty |\ \emptyset)$ in $\overline{\mM}\nice(J)$
is homeomorphic to the interval $(-1,0]$, with $(u^\infty |\ \emptyset)$
as the boundary point.  If the limit is any building with trivial upper
levels but a nontrivial lower level, then it is described by 
Theorem~\ref{thm:compactness}, and must in fact be a building of Type~(I)
since all the others in the list have total index~$2$.  We can thus write the
limit as $(v_0 | v_1^-)$ for an index~$0$ cylinder
$v_0$ in $(\overline{W},J)$ and an index~$1$ plane $v_1^-$ in $(\RR \times M,J_-)$,
both nicely embedded.  The breaking orbit between these must be a doubly
covered bad orbit due to the assumption excluding unknotted orbits.
Proposition~\ref{prop:gluing} on gluing implies that a neighborhood of
$(v_0 | v_1^-)$ in $\overline{\mM}\nice(J)$ is also homeomorphic to 
$(-1,0]$, with the building forming the boundary point.  We've thus shown
that each connected component of $\overline{\mM}\nice(J)$ has the 
topology of a compact connected $1$-manifold with boundary, so either a
circle or a closed interval, and at least one component is of the latter
type, namely the one containing the seed curves.  We claim in fact that
$\overline{\mM}\nice(J)$ itself is compact.  In light of the above description,
this can only fail to be true if $\overline{\mM}\nice(J)$
has infinitely many connected components, in which case we can find a sequence
of nicely embedded curves $u_k \in \mM\nice(J)$ all belonging to separate
components.  But these curves are all homologous and thus satisfy a uniform
energy bound, so they have an SFT-convergent subsequence by \cite{SFTcompactness},
whose limit is in~$\overline{\mM}\nice(J)$ by definition.
We conclude that $\overline{\mM}\nice(J)$ is a compact $1$-manifold with boundary.

The crucial observation is now that since the breaking orbit
in $(v_0 | v_1^-)$ is doubly covered, there are always exactly two choices of
decoration which give two distinct elements of $\overline{\mM}(J,\gamma_\infty,\emptyset)$
having the same curves as their main and lower levels.  Indeed, these
cannot represent equivalent elements of $\overline{\mM}(J,\gamma_\infty,\emptyset)$
since the levels are both somewhere injective and thus admit no 
automorphisms that could change the decoration.  Moreover, both buildings
can be glued via Prop.~\ref{prop:gluing} to produce smooth $1$-parameter
families of somewhere injective planes in $\mM(J,\gamma_\infty,\emptyset)$,
which will be nicely embedded since they are homologous to the seed curves,
thus both buildings also belong to~$\overline{\mM}\nice(J)$.  With this
understood, let
$$
\widehat{\mM}\nice(J) = \overline{\mM}\nice(J) \big/ \sim,
$$
where the equivalence relation identifies any two buildings that have matching
levels but different decorations.  Topologically, $\widehat{\mM}\nice(J)$
is formed by gluing components of $\overline{\mM}\nice(J)$ together along
boundary points of the form $(v_0 | v_1^-)$, all of which become \emph{interior}
points in~$\widehat{\mM}\nice(J)$.  But exactly one boundary point of
$\overline{\mM}\nice(J)$ does not belong to a matching pair, namely
$(u^\infty |\ \emptyset)$, thus $\widehat{\mM}\nice(J)$ is homeomorphic to
a compact $1$-manifold with one boundary point, giving a contradiction.
\end{proof}

\begin{proof}[Proof of Theorem~\ref{thm:nonprime}\eqref{item:nonprime}]
Assume $(M,\xi)$ is a reducible tight contact $3$-mani\-fold, and fix the
contact form $\alpha_+$ and almost complex structure $J_+ \in \jJ(\alpha_+)$
described in \S\ref{sec:reducible}, so there is a homotopically nontrivial
$2$-sphere $S \subset M$ containing a simple nondegenerate Reeb orbit
$\gamma_\infty : S^1 \to M$ that divides $S$ into hemispheres 
$S_\pm = u_M^\pm(\CC)$ that are each images of nicely embedded $J_+$-holomorphic
index~$1$ planes
$$
u^\pm = (u^\pm_\RR,u^\pm_M) : \CC \to \RR \times M.
$$
By Lemma~\ref{lemma:csUniqueness}, these are the only $J_+$-holomorphic
curves up to parametrization and 
$\RR$-translation that have a single positive puncture asymptotic
to $\gamma_\infty$ (and arbitrary negative punctures).

Now pick an arbitrary nondegenerate contact form $\alpha$ on $(M,\xi)$, 
and suppose it admits no unknotted Reeb orbit with Conley-Zehnder index~$2$
and self-linking number~$-1$.  We are free to
rescale $\alpha_+$ so that $\alpha_+ = e^f \alpha$ for some 
$f : M \to (0,\infty)$ without loss of generality.  There is then a Liouville
cobordism $(W,d\lambda)$ with
$$
W = \left\{ (r,x) \in \RR \times M \ \big|\ 0 \le r \le f(x),\ x \in M \right\}, \qquad
\lambda = e^r \alpha,
$$
which inherits the contact form $\alpha$ on its concave boundary
$M_- := \{0\} \times M$ and $\alpha_+$ on its convex boundary
$M_+ := \{ (f(x),x)\ |\ x \in M \}$.  Choose a generic $J \in 
\jJ(W,d\lambda,\alpha_+,\alpha)$ that matches $J_+$ in the positive end
and has a generic restriction $J_- \in \jJ(\alpha)$ to the negative end.
The $\RR$-translations of $u^+$ and $u^-$ then give rise to a disjoint pair of
$1$-parameter families of nicely embedded seed curves in the completion
$\overline{W}$, living in the positive cylindrical end.  

As in the overtwisted case, we consider the space $\mM\nice(J) \subset
\mM(J,\gamma_\infty,\emptyset)$ of nicely embedded planes that are in the
same relative homology class with either family of seed curves, and its 
closure $\overline{\mM}\nice(J)$ in the SFT compactification.  Then
$\overline{\mM}\nice(J)$ contains the two elements $(u^\pm |\ \emptyset)$
with empty main levels, plus (by Theorem~\ref{thm:compactness}) buildings
of the form $(v_0 | v_1^-)$ with a nicely embedded index~$1$ plane $v_1^-$ in 
the lower level and breaking orbits that are doubly covered.  The same
argument as in the overtwisted case proves that $\overline{\mM}\nice(J)$
has the topology of a compact $1$-manifold with boundary, where boundary
points of the form $(v_0 | v_1^-)$ come in matching pairs with the same
levels but different decorations due to the doubly covered breaking orbit.  
The space
$$
\widehat{\mM}\nice(J) = \overline{\mM}\nice(J) \big/ \sim
$$
defined by identifying matching pairs is thus a compact $1$-manifold with
exactly two boundary points, the two buildings $(u^+ |\ \emptyset)$ and
$(u^- |\ \emptyset)$.  It follows that these two buildings belong to the
same connected component in $\widehat{\mM}\nice(J)$, and the images of
the curves or buildings in this component under the projection
$\RR \times M \to M$ then give a continuous $1$-parameter family of
disks with fixed boundary $\gamma_\infty$ forming a homotopy from $S_+$
to $S_-$ in~$M$.  This is a contradiction since $[S] \ne 0 \in \pi_2(M)$.
\end{proof}

\begin{proof}[Proof of Theorem~\ref{thm:main}]
Recall from \S\ref{sec:stdSphere} the definitions of the standard
contact form $\alpha\std$ and the almost complex structure 
$J\std \in \jJ(\alpha\std)$.
Given a Liouville cobordism $(W,d\lambda)$ from some contact manifold 
$(M,\xi)$ to $(S^3,\xi\std)$ with $\lambda|_{TS^3} = \alpha\std$ and
a nondegenerate contact form $\lambda|_{TM} = \alpha$ on $(M,\xi)$,
one can choose a generic
$J \in \jJ(W,d\lambda,\alpha\std,\alpha)$ that matches $J\std$ in the positive
cylindrical end and has a generic restriction $J_- \in \jJ(\alpha)$ to the
negative cylindrical end.  The seed curves $u_w$ constructed 
in \S\ref{sec:stdSphere} that are contained
in $[0,\infty) \times S^3$ can then equally well be regarded as
nicely embedded $J$-holomorphic planes in the completed cobordism~$\overline{W}$
with images in the positive cylindrical end.  Recall that by
Lemma~\ref{lemma:uwUniqueness}, every curve in $\RR \times S^3$ with one
positive puncture asymptotic to the particular orbit $\gamma_\infty$
(and arbitrary negative punctures) is one of these planes.

Let $\mM_1\nice(J) \subset \mM_1(J,\gamma_\infty,\emptyset)$ denote the set of
all nicely embedded planes in the same relative
homology class as the seed curves, carrying the extra data of one marked point, 
and denote its closure in the SFT
compactification by $\overline{\mM}_1\nice(J) \subset \overline{\mM}_1(J,\gamma_\infty,\emptyset)$.  
All curves in $\mM_1\nice(J)$ have index~$2$ and are Fredholm regular,
so $\mM_1\nice(J)$ is a smooth $4$-dimensional manifold.
As a consequence of
Lemma~\ref{lemma:uwUniqueness}, all buildings in $\overline{\mM}_1\nice(J)$
with nontrivial upper levels are of the form $(u_w |\ \emptyset)$ with
$u_w$ a seed curve and the main level empty.  All other buildings in
$\overline{\mM}\nice(J)$ are described by Theorem~\ref{thm:compactness}.

We will now show that if $\alpha$ admits no unknotted Reeb orbits
with self-linking number~$-1$ and Conley-Zehnder index $2$, then it must admit
one with Conley-Zehnder index~$3$.  To this end, choose points $p_+ \in S^3$
and $p_- \in M$, and a $1$-dimensional submanifold
$$
\ell(\RR) \subset \overline{W}
$$
defined via a smooth proper embedding $\ell : \RR \hookrightarrow \overline{W}$
such that $\ell(t) = (t,p_+) \in [0,\infty) \times S^3$ for all $t > 0$ sufficiently
large and $\ell(t) = (t,f(t)) \in (-\infty,0] \times M$ for $t < 0$ sufficiently
small, with $\lim_{t \to -\infty} f(t) = p_-$.  After generic perturbations
of both $p_-$ and $\ell$ away from~$+\infty$, we are free to assume:
\begin{enumerate}
\item $\ell$ is transverse to the evaluation map on the moduli space of all
somewhere injective $J$-holomorphic curves in $\overline{W}$ that are 
not fully contained in the positive end;
\item $p_-$ is not contained in any closed Reeb orbit;
\item $\RR \times \{p_-\}$ is transverse to the evaluation map on the moduli
space of all somewhere injective $J_-$-holomorphic curves in $\RR \times M$.
\end{enumerate}
Now consider the $1$-dimensional submanifold
$$
\mM\nice_\ell(J) = \ev^{-1}(\ell(\RR)) \subset \mM\nice_1(J),
$$
and for convenience define
$$
\overline{\mM}\nice_\ell(J) \subset \overline{\mM}\nice_1(J)
$$
to be the set of all buildings in its SFT-closure that have the marked
point in the main level.  The evaluation map then restricts to
$$
\overline{\mM}\nice_\ell(J) \stackrel{\ev}{\longrightarrow} \ell(\RR),
$$
and the image of this map clearly contains an interval of the form
$\ell([t_0,\infty))$ due to the seed curves.  The goal of the next two
paragraphs is to show that this map is in fact surjective onto~$\ell(\RR)$.

Observe first that the restriction of $\ev$ to $\mM\nice_\ell(J)$ is an
open map due to Prop.~\ref{prop:localFol}.  We claim moreover that any
building $u \in \overline{\mM}\nice_\ell(J)$ with $\ev(u) = \ell(t_0)$ has
a neighborhood in $\overline{\mM}\nice_\ell(J)$ whose image under $\ev$
contains $\ell([t_0,t_0 + \epsilon))$ or $\ell((t_0 - \epsilon,t_0])$ for
sufficiently small $\epsilon > 0$.  To see this, note first that unless
$u$ is a smooth curve, it is necessarily one of Types~(II) though~(VI) on the list in Theorem~\ref{thm:compactness},
but our genericity assumptions impose further restrictions: since $\ell$
intersects the evaluation map transversely, the main level of the building
must have index at least~$1$, excluding all options other than Type~(III).
We can thus write $u = (v_0 | v_1^-)$ for an index~$1$ cylinder
$v_0 \in \mM_1(J,\gamma_\infty,\gamma)$ with a marked point, and an index~$1$ 
plane $v_1^- \in \mM(J_-,\gamma,\emptyset)$, both nicely embedded.  
Here the lower level $v_1^-$ represents an isolated element in 
$\mM(J_-,\gamma,\emptyset) / \RR$, while $v_0$ has a neighborhood 
$\vV_0 \subset \mM_1(J,\gamma_\infty,\gamma)$ that is a smooth $3$-manifold,
and by choosing this neighborhood sufficiently small, we can assume that
\begin{equation}
\label{eqn:evU0}
\vV_0 \stackrel{ev}{\longrightarrow} \overline{W}
\end{equation}
has only one intersection with $\ell(\RR)$, namely at~$v_0$, and it is
transverse.  Proposition~\ref{prop:gluing} now gives a smooth gluing map
$$
\Psi : [0,\infty) \times \vV_0 \hookrightarrow \mM_1(J,\gamma,\emptyset)
$$
whose image contains all smooth curves close to $(v_0 | v_1^-)$ in the
SFT topology, and all of these belong to $\mM\nice_1(J)$.  The maps
$$
\{R\} \times \vV_0 \to \overline{W} : v \mapsto \ev(\Psi(R,v))
$$
can then be assumed to converge uniformly to \eqref{eqn:evU0} as $R \to \infty$,
implying that their algebraic count of intersections with $\ell(\RR)$ is 
$1$ for all $R > 0$ sufficiently large.  Choosing $R_0 > 0$ large and generic, 
the subset
$$
\uU_\ell := \left\{ (R,v) \in [R_0,\infty) \times \vV_0 \ \big|\ \ev(\Psi(R,v)) \in \ell(\RR) \right\}
$$
is then a smooth and properly embedded $1$-manifold that intersects 
$\{R_0\} \times \vV_0$ transversely at its boundary $\p\uU_\ell$, which is
a finite set of points.  This $1$-manifold must have at least one 
noncompact component, otherwise the algebraic count of points in $\p\uU_\ell$
could not be~$1$, hence there exists a smooth path $\uU_\ell^0 \subset 
[R_0,\infty) \times \vV_0$ whose image under $\Psi$ is a smooth family of
curves $u_t \in \mM\nice_\ell(J)$ with $t \in [0,\infty)$ such that 
$$
u_t \to (v_0 | v_1^-) \quad \text{ as } \quad t \to \infty.
$$
The image of this path under $\ev$ necessarily contains
$\ell((t_0,t_0 + \epsilon))$ or $\ell((t_0 - \epsilon,t_0))$ as claimed.

Now, given the lack of unknotted orbits with Conley-Zehnder index~$2$,
the breaking orbit in the building $(v_0 | v_1^-)$ of the previous paragraph
must be doubly covered, so that building has a twin obtained by keeping the 
same levels but changing the decoration, and the fact that both levels are
somewhere injective implies that the two buildings are not equivalent
in $\overline{\mM}_1(J,\gamma_\infty,\emptyset)$.  Thus the twin building
can also be glued using Prop.~\ref{prop:gluing} and produces a nearby
family of curves in $\mM\nice_1(J)$, some of which satisfy $\ev(u) \in \ell(\RR)$
and whose images under $\ev$ again cover an interval of the form
$\ell((t_0,t_0 + \epsilon))$ or $\ell((t_0-\epsilon,t_0))$.  But since no
two curves in $\mM\nice_1(J)$ can intersect, the curves in $\mM\nice_\ell(J)$
obtained by gluing the same curves $v_0$ and $v_1^-$ with two distinct
decorations necessarily cover two disjoint intervals, so that the entirety
of $\ell((t_0 - \epsilon,t_0 + \epsilon))$ is necessarily in the image of
$\overline{\mM}\nice_\ell(J)$ for sufficiently small $\epsilon > 0$.  
This proves that that image is an open
subset of $\ell(\RR)$, hence it is all of~$\ell(\RR)$.

With this established, we can now find a sequence $u_k \in \mM\nice_\ell(J)$
with $\ev(u_k) = \ell(t_k)$ for some sequence $t_k \to -\infty$, and a
subsequence of $u_k$ must again converge to one of the buildings listed
in Theorem~\ref{thm:compactness}, but this time with the marked point
ending up in a lower level and mapping to $\RR \times \{p_-\}$.  Since
$p_-$ is not in the image of any Reeb orbit, the marked point in the limit
does not lie on a trivial cylinder.  Transversality of $\RR \times \{p_-\}$
to the evaluation map thus implies that the component with the marked
point must have index at least~$2$, which rules out all options in the
list other than Type~(II): $u_k$ has a subsequence covergent to 
$(v_0 | v_1^-)$ where $v_1^-$ is a nicely embedded plane in $(\RR \times M,J)$
with index~$2$ and an asymptotic orbit with odd parity, which is therefore
simply covered.  This is the
promised unknotted orbit with self-linking number~$-1$ and
Conley-Zehnder index~$3$.
\end{proof}

\appendix

\section{Liouville cobordisms from exact Lagrangian caps}
\label{app:cap}

In this appendix, we provide the details behind Example~\ref{ex:cap},
using a general construction that was explained to us by Emmy Murphy.

\begin{prop}
Suppose $(M,\xi)$ is a closed contact manifold of dimension $2n-1 \ge 3$,
$\Lambda \subset M$ is a closed Legendrian submanifold and $L \subset [1,\infty) \times M$
is an exact Lagrangian cap for~$\Lambda$.  Then $L$ has an open neighbourhood
$\uU_L \subset [1,\infty) \times M$ such that, after smoothing corners,
$$
W_- := \left([0,1] \times M\right) \cup \overline{\uU}_L
$$
admits the structure of a Weinstein cobordism from $(M,\xi)$ to some 
contact manifold $(M',\xi')$, and for suitably large constants $T > 1$,
$$
W_+ := \left([1,T] \times M\right) \setminus \uU_L
$$
is a Liouville cobordism from $(M',\xi')$ to $(M,\xi)$.
\end{prop}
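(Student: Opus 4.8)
The plan is to construct a single Liouville form $\lambda_1$ on $[0,\infty)\times M$ (write $\lambda:=e^r\alpha$ for the standard one and $\omega:=d\lambda$) which coincides with $\lambda$ away from a small neighbourhood of $L$, which carries a Weinstein structure on a thin neighbourhood $\uU_L$ of $L$, and which near $\{1\}\times M$ restricts to the trivial structure with Liouville vector field $\partial_r$ and Morse function $r$. Granted such a $\lambda_1$, the advertised cobordisms are nothing but the two pieces into which the hypersurface $\partial\uU_L$ cuts $[0,T]\times M$ for $T$ large, each carrying $\lambda_1$.

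To build $\lambda_1$ I would combine a relative version of the Weinstein Lagrangian neighbourhood theorem with the standard realisation of a Morse function on the base of a cotangent bundle (equivalently, of a handle decomposition) by a Weinstein structure, cf.~\cite{CieliebakEliashberg}. Fix a Morse function $\phi_0 : L\to[0,\infty)$ with $\phi_0^{-1}(0)=\partial L$ a regular level, $\nabla\phi_0$ pointing into $L$ along $\partial L$, and all critical points in $\interior(L)$. Near $\partial L=\{1\}\times\Lambda$ the cap $L$ coincides with the Lagrangian cylinder over the Legendrian $\Lambda$, and a neighbourhood of $\Lambda$ in $(\{1\}\times M,\alpha)$ has the standard model $J^1\Lambda$; matching these models along the collar, one obtains a neighbourhood $\uU_L^+$ of $L$, a Liouville form $\lambda_1$ for $\omega$ on $[0,\infty)\times M$ agreeing with $\lambda$ off $\uU_L^+$, a smaller neighbourhood $\uU_L\subset\uU_L^+$ of $L$, and a Morse function $\psi$ such that on $W_-:=([0,1]\times M)\cup\overline{\uU}_L$ the pair $(\lambda_1,\psi)$ is Weinstein, with $\psi=r$ near $\{1\}\times M$, $\Crit(\psi)=\Crit(\phi_0)\subset\interior(L)$ with matching indices, and with the Liouville vector field of $\lambda_1$ transverse to the part of $\partial\uU_L$ lying in $(1,\infty)\times M$ and there pointing out of $\uU_L$. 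The exactness hypothesis on $L$ is used precisely to guarantee that the modification of $\lambda$ needed here is achievable by adding a genuine exact form: two primitives of $\omega$ near $L$ differ by a closed $1$-form, whose class under $H^1(\uU_L^+)\cong H^1(L)$ is governed by $[\lambda|_{TL}]$, which vanishes by hypothesis; hence $\lambda_1-\lambda$ can be taken of the form $d(\text{function})$ supported near $L$, so that $d\lambda_1=\omega$ automatically and no smallness is needed.

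With $\lambda_1$ in hand, patch the Morse function $r$ on $[0,1]\times M$ with $\psi$ on $\uU_L$ (they agree on the overlap, where each is the $r$-coordinate) and smooth the codimension-$2$ corner along $\partial\big(\overline{\uU}_L\cap(\{1\}\times M)\big)$, a region where the Liouville vector field is $\partial_r$, so the gradient-like condition survives: this realises $W_-$, with $\lambda_1$, as a Weinstein cobordism from its concave boundary $\{0\}\times M=(M,\xi)$ to $(M',\xi')$, where $M'$ is its convex boundary and $\xi':=\ker(\lambda_1|_{TM'})$. Now choose $T>1$ so large that $L$ and the support of $\lambda_1-\lambda$ lie in $[1,T-1)\times M$; then $W_-\cup_{M'}W_+=[0,T]\times M$ with $W_+:=([1,T]\times M)\setminus\uU_L$, so that $\partial W_+=M'\sqcup(\{T\}\times M)$. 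On $W_+$ the form $\lambda_1$ satisfies $d\lambda_1=\omega$ and is exact; along $\{T\}\times M$ it equals $\lambda$, with Liouville vector field $\partial_r$ pointing outward, while along $M'$ the Liouville vector field is the Weinstein one of the previous step, which points into $W_+$ and is transverse there (being $\partial_r$ on the part of $M'$ inside $\{1\}\times M$, and pointing out of $\uU_L$ along the rest of $\partial\uU_L$). Thus $\lambda_1$ restricts to contact forms for $\xi'$ and for $\xi$ on the two boundary components, with $M'$ concave and $\{T\}\times M$ convex, so $W_+$ is a Liouville cobordism from $(M',\xi')$ to $(M,\xi)$.

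I expect the main obstacle to be the second paragraph: arranging the Weinstein structure on a neighbourhood of $L$ so that, simultaneously, it is gradient-like for a Morse function with critical points confined to $\interior(L)$, it restricts to the trivial $(\partial_r,r)$-structure near $\{1\}\times M$ (so as to glue onto $[0,1]\times M$), and the hypersurface $\partial\uU_L$ used to cut $[0,T]\times M$ is transverse to its Liouville vector field (so that $W_+$ genuinely inherits a Liouville structure). The core of this is the classical passage from a Morse function on the base of a cotangent bundle to a Weinstein structure; the real work lies in making these three compatibilities hold at once near $\partial L$, and in confirming that the corner-smoothing of the third paragraph disturbs none of them.
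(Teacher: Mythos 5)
Your proposal follows essentially the same route as the paper's proof: identify a neighbourhood of $L$ with the disk bundle $\DD T^*L$, realize a Morse function on $L$ with inward gradient at $\p L$ by a Weinstein structure on that neighbourhood (the paper does this explicitly via $\lambda_\epsilon = -dF_\epsilon\circ J$ with $F_\epsilon(q,p)=\epsilon f(q)+\tfrac{1}{2}|p|^2$, checking tangency of the Liouville field to $L$ and transversality at the two boundary faces), use exactness of $\lambda|_{TL}$ to correct the discrepancy between the two primitives of $\omega$ by an exact term, and then read off $W_-$ as Weinstein and $W_+$ as Liouville. The one point to tighten is your cohomological step: vanishing of $[\lambda|_{TL}]$ in $H^1(L)$ only gives a primitive $h$ that is locally constant, not zero, near the collar at $\{1\}\times M$, so to obtain a correction supported away from that region you need the relative statement the paper uses ($H^1_c(\mathring{\uU}_L)\cong H^1_c(\mathring{L})$ together with the normalization $g\equiv 0$ near $\p L$, which uses $\lambda|_{TL}=0$ there); with that adjustment, and a Moser-type isotopy to make the Weinstein model an honest primitive of $\omega$ (also implicit in the paper), your single-form-then-cut variant is the same argument.
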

\begin{proof}
Being an exact Lagrangian cap means that for some choice of contact form
$\alpha$ on $(M,\xi)$ and some constant $T > 1$, the trivial Liouville cobordism
$$
(Z,d\lambda) := ([1,T] \times M, d(e^r\alpha))
$$
contains $L$ as a compact and properly embedded Lagrangian 
submanifold with $\p L = \{1\} \times \Lambda$, such that the Liouville 
vector field $\p_t$ is tangent to $L$ near $\p L$ and 
$$
\lambda|_{TL} = dg
$$
for some smooth function $g : L \to \RR$.  Note that since $\Lambda$ is 
Legendrian and $\lambda$ annihilates its dual Liouville vector field,
$g$ must be constant near~$\p L$; we shall assume without loss of generality
that it vanishes there.  By a combination of the Lagrangian and
Legendrian neighbourhood theorems, $L$ has a symplectic neighbourhood 
$(\uU_L,d\lambda)$ whose closure $\overline{\uU}_L$ is symplectomorphic to 
the unit disk bundle
in $\DD T^*L \subset T^*L$ for some choice of Riemannian metric on~$L$.
Note that this disk bundle has boundary and corners, its boundary consisting
of two smooth faces, 
$$
\p_- \overline{\uU}_L := \DD T^*L|_{\p L} \quad \text{ and } \quad
\p_+ \overline{\uU}_L := S T^*L,
$$
where $S T^*L$ is the unit cotangent bundle.
We shall write points in $T^*L$ as $(q,p)$ for $q \in L$ and $p \in T_q^*L$,
and use the metric and its induced Levi-Civita connection to
identify $T_{(q,p)}(T^*L)$ with $T_q L \oplus T_q^*L = T_q L \oplus T_q L$,
where the first splitting comes from the horizontal-vertical decomposition
given by the connection, and the second uses the isomorphism $T_q L =
T_q^*L$ determined by the metric.  The canonical Liouville form
$\lambda_0$ on $T^*L$ can then be written as
$$
\lambda_0 = - d F_0 \circ J,
$$
where $F_0(q,p) = \frac{1}{2}|p|^2$ and $J$ is the compatible
almost complex structure
on $T^*L$ that acts on $T_{(q,p)}(T^*L) = T_q L \oplus T_q L$ as
$\begin{pmatrix} 0 & \1 \\ -\1 & 0 \end{pmatrix}$.
In particular, $F_0$ is a $J$-convex function, and therefore so is
$$
F_\epsilon(q,p) := \epsilon f(q) + \frac{1}{2}|p|^2
$$
for any
smooth function $f : L \to \RR$ if $\epsilon > 0$ is sufficiently small.
Setting $\lambda_\epsilon := -d F_\epsilon \circ J$,
$d\lambda_\epsilon$ is then a symplectic form isotopic to $d\lambda_0$ on
a suitable neighbourhood of the zero-section~$L$, and since the antipodal map
$(q,p) \mapsto (q,-p)$ is $J$-antiholomorphic but preserves~$F_\epsilon$,
it also preserves the Liouville vector field $V_\epsilon$ dual 
to~$\lambda_\epsilon$, proving that $V_\epsilon$ is tangent to~$L$.

Now choose $f : L \to \RR$ in this construction to be a Morse function
that is constant with inward-pointing gradient along~$\p L$.
After possibly shrinking the neighbourhood 
$\overline{\uU}_L \cong \DD T^*L$ of~$L$, we can then assume that 
$V_\epsilon$ points transversely inward at $\p_- \overline{\uU}_L$
and transversely outward at $\p_+\overline{\uU}_L$.  Since the Liouville field
of $(Z,d\lambda)$ is also tangent to $L$ near $\p L$ and points inward
at $\{1\} \times M \subset \p Z$ (see Figure~\ref{fig:cap}), 
we can now assume after an isotopy of $\overline{\uU}_L$ 
that the two Liouville fields match near $\p_- \overline{\uU}_L$, meaning
$\lambda = \lambda_\epsilon$ on that region.  We can therefore use
$\lambda_\epsilon$ to extend $\lambda$ from $[0,1] \times M$ over $W_-$
so that the dual Liouville vector field remains gradient like, making
$W_-$ a Weinstein cobordism from $(M,\xi)$ to the new contact manifold
$(M',\xi')$, obtained by removing a neighbourhood of $\Lambda$ from
$\{1\} \times M$ and replacing it with~$S T^*L$.

It is also immediate from the above construction that $W_+$ is a strong
symplectic cobordism from $(M',\xi')$ to $(M,\xi)$, and the
exactness of the cobordism follows from the fact that $L$ is an exact
Lagrangian.  Indeed, let $\mathring{\uU}_L := \overline{\uU}_L \setminus 
\p_- \overline{\uU}_L \cong \DD T^*L|_{\mathring{L}}$.
Since $\lambda$ and $\lambda_\epsilon$ match near $\p_-\overline{\uU}_L$ and
are both primitives of the same symplectic form, $\lambda - \lambda_\epsilon$
represents an element of the compactly supported de Rham cohomology
$H^1_c(\mathring{\uU}_L)$, which is isomorphic to $H^1_c(\mathring{L})$.
But under restriction to~$L$, $\lambda_\epsilon$ vanishes and $\lambda$
is exact, so this cohomology class is zero, implying $\lambda = \lambda_\epsilon
+ dh$ on $\uU_L$ for some smooth function $h : \uU_L \to \RR$ that vanishes
near $\p_-\overline{\uU}_L$.  By multiplying $h$ with a suitable cutoff 
function, we can then find a Liouville
form on $W_+$ that matches $\lambda_\epsilon$ near~$L$ and matches $\lambda$
outside a neighbourhood of~$L$.
\end{proof}

\begin{remark}
\label{remark:subcritical}
If $W$ is a subcritical Weinstein filling of $(M,\xi)$, then the Weinstein
filling of $(M',\xi')$ obtained by stacking $W_-$ on top of $W$ is
never subcritical.  To see this, note that the Morse function $f : L \to \RR$
in the above proof can always be chosen to have exactly one critical point 
of index~$n$, in which case $F_\epsilon$ also has exactly one critical
point of index~$n$.  If $W$ is subcritical, this produces a handle
decomposition of $W \cup_M W_-$ that includes exactly one critical handle,
so $H_n(W \cup_M W_-) \ne 0$.
\end{remark}

\begin{bibdiv}
\begin{biblist}
\bibselect{wendlc}
\end{biblist}
\end{bibdiv}

\end{document}